\documentclass[fleqn,10pt]{article}
\usepackage{amsmath,amssymb,amsthm,xcolor,framed,esint,dsfont}
\usepackage[english]{babel}
\usepackage[margin=3cm]{geometry}
\usepackage{csquotes}
\usepackage[shortlabels]{enumitem}
\usepackage[title]{appendix}
\def\Xint#1{\mathchoice
	{\XXint\displaystyle\textstyle{#1}}%
	{\XXint\textstyle\scriptstyle{#1}}%
	{\XXint\scriptstyle\scriptscriptstyle{#1}}%
	{\XXint\scriptscriptstyle\scriptscriptstyle{#1}}%
	\!\int}

\def\XXint#1#2#3{{\setbox0=\hbox{$#1{#2#3}{\int}$}
		\vcenter{\hbox{$#2#3$}}\kern-.5\wd0}}

\newcommand{\norm}[1]{{\Vert #1\Vert}}
\newcommand{\abs}[1]{{\left\vert #1\right\vert}}
\newcommand{\R}{{\mathbb R}}

\newcommand{\nl}{\newline}

\newcommand{\ep}{\varepsilon}
\newcommand{\lt}{\left}
\newcommand{\rt}{\right}
\newcommand{\na}{\nabla}
\newcommand{\nn}{\nonumber}
\newcommand{\one}{{\mathds{1}}}
\newcommand{\e}{\varepsilon}
\newcommand{\la}{\langle}
\newcommand{\ra}{\rangle}
\DeclareMathOperator{\dv}{div}

\newcommand{\loc}{{\rm loc}}
\newcommand{\qd}{\quad}

\newcommand{\Z}{\mathbb{Z}}
\newcommand{\PPI}{\mathcal{P}}

\newcommand{\AI}{\mathcal{A}}
\newcommand{\BI}{\mathcal{B}}

\newcommand{\FI}{\mathcal{F}}

\newcommand{\RI}{\mathcal{R}}
\newcommand{\WI}{\mathcal{W}}

\newcommand{\MI}{\mathcal{Q}}

\newcommand{\wt}{\widetilde}
\newcommand{\ti}{\tilde}
\newcommand{\hi}{\tilde{h}}

\newcommand{\blue}[1]{{\textcolor{blue}{#1}}}

\newtheorem{thm}{Theorem}
\newtheorem{prop}[thm]{Proposition}
\newtheorem{lem}[thm]{Lemma}
\newtheorem{cor}[thm]{Corollary}
\newtheorem{con}[thm]{Conjecture}
\newtheorem{defin}[thm]{Definition}
\newtheorem{rem}[thm]{Remark}
\newcommand{\bp}[1]{{\textcolor{black}{#1}}}

\title{}
\date{}
\author{}

\begin{document}

\title{Factorization for entropy production of the Eikonal equation and regularity}
\date{}
\author{Andrew Lorent\footnote{Department of Mathematical Sciences, University of Cincinnati, Cincinnati, OH 45221, USA. Email: lorentaw@uc.edu}
	\and Guanying Peng\footnote{Department of Mathematical Sciences, Worcester Polytechnic Institute, Worcester, MA 01609, USA. Email: gpeng@wpi.edu}}
\maketitle

\maketitle
\begin{abstract}
	The Eikonal equation arises naturally in the limit of the second order Aviles-Giga functional whose $\Gamma$-convergence is a long standing challenging problem. The theory of entropy solutions of the Eikonal equation plays a central role in the variational analysis of this problem. Establishing fine structures of entropy solutions of the Eikonal equation, e.g. concentration of entropy measures on $\mathcal{H}^1$-rectifiable sets in $2$D, is arguably the key missing part for a proof of the full $\Gamma$-convergence of the Aviles-Giga functional. In the first part of this work, for $p\in \lt(1,\frac{4}{3}\rt]$  we establish an $L^p$ version of the main theorem of \cite{GL}. Specifically  we show that if $m$ is a solution to the Eikonal equation, then $m\in B^{\frac{1}{3}}_{3p,\infty,\loc}$  is equivalent to all entropy productions of $m$ being in $L^p_{\loc}$.  Given the main result of \cite{GL}, this result also shows that as a consequence of a  weak form of the Aviles-Giga conjecture (namely the conjecture that all solutions to the Eikonal equation whose  entropy productions are in $L^p_{\loc}$ are rigid) - the rigidity/flexibility threshold of the Eikonal equation is exactly the space $ B^{\frac{1}{3}}_{3,\infty,\loc}$. In the second part of this paper, under the assumption that all entropy productions are in $L^p_{\loc}$, we establish a factorization formula for entropy productions of solutions of the Eikonal equation in terms of the two Jin-Kohn entropies. A consequence of this formula is control of all entropy productions by the Jin-Kohn entropies  in the $L^p$ setting - this is a strong extension of the main result of \cite{LP}.
\end{abstract}

\section{Introduction}  
\subsection{The Aviles-Giga functional and the Eikonal equation}
 The Aviles-Giga functional is a second order functional that (subject to appropriate boundary conditions) models phenomena from thin film blistering to smectic liquid crystals, and is also the most natural higher order generalization of the Cahn-Hilliard functional. It is defined as
\begin{equation*}
AG_{\ep}(u)=\int_{\Omega}\lt(\ep \lt|\na^2 u\rt|^2  +\frac{\lt(1-\lt|\na u\rt|^2\rt)^2}{\ep}\rt)\; dx
\end{equation*}
for $u\in W^{2,2}(\Omega)$ over a bounded domain $\Omega\subset\R^2$, where $\nabla^2 u$ is the Hessian matrix of the scalar-valued function $u$ and $\ep>0$ is a small parameter. The Aviles-Giga conjecture for the $\Gamma$-limit of $AG_{\ep}$ is one of the central conjectures in the theory of $\Gamma$-convergence and has attracted a great deal of attention, yet remains open; see for example  \cite{avilesgig,avgig1,ADM,mul2,ottodel1, CD, ark, GL}. What makes the Aviles-Giga conjecture much more challenging than the $\Gamma$-convergence of the Cahn-Hilliard functional is the cubic power scaling in the former, which makes the $BV$ function theory inapplicable.

One of the foundational theorems established for the Aviles-Giga functional is the compactness \cite{ADM,mul2}. Specifically, given a sequence $\{u_\ep\}\subset W^{2,2}(\Omega)$ such that $\sup_\ep AG_{\ep}(u_\ep)<\infty$, it has a subsequence that converges strongly in $W^{1,3}$ to some limiting function $u$ (here we are stating the compactness result in \cite{ADM}; the version proved in \cite{mul2} is slightly different). The limiting function $u$ must satisfy the Eikonal equation given by
\begin{equation*}
\lt|\na u\rt|=1 \qd\text{ a.e in }\Omega. 
\end{equation*}
In two dimensions, the above Eikonal equation can be equivalently formulated in terms of vector fields $m:\Omega\to\R^2$ as 
\begin{equation}
	\label{ageq3}
	|m|=1\text{ a.e.},\;\;\;\;\; \dv m=0\text{ in }\mathcal{D}'(\Omega)
\end{equation}
by identifying $m=\na^{\perp}u$. This formulation of the Eikonal equation in $2$D is enlightening in that one can view \eqref{ageq3} as a scalar conservation law in $1$D. This was first observed by the authors of \cite{mul2}, who introduced the concept of \em entropies \em as a central tool for the analysis of the Aviles-Giga functional (the implicit use of the concept of entropies in this setting appeared in \cite{JK,ADM}). In \cite{mul2}, entropies for the Eikonal equation \eqref{ageq3} are defined as vector fields $\Phi\in C^{\infty}_c(\R^2;\R^2)$ such that $\dv\Phi(m)\equiv 0$ if $m$ is a smooth solution to (\ref{ageq3}). Such entropies can be characterized explicitly; see \eqref{harment}. This is completely analogous to entropies for hyperbolic conservation laws. For solutions $m=\na^\perp u=\lim_{\ep\rightarrow 0} \na u_\e^{\perp}$ with $\sup_{\ep}AG_{\ep}(u_\ep)<\infty$, it can be shown that $\dv\Phi(m)$ are finite Radon measures, called \emph{entropy measures}, and (if $m$ has the additional property that $m\in BV$) they detect the jumps in $m$. As such, the function space for the $\Gamma$-convergence of the Aviles-Giga functional is a subset of the space $\AI(\Omega)$ consisting of \emph{entropy solutions} of the Eikonal equation, i.e. weak solutions $m$ to \eqref{ageq3} such that $\dv\Phi(m)\in\mathcal{M}(\Omega)$ for all entropies $\Phi$, where $\mathcal{M}(\Omega)$ is the set of finite Radon measures on $\Omega$. It is thus natural to understand the space $\AI(\Omega)$ for the purpose of the full proof  of the $\Gamma$-convergence of the Aviles-Giga functional.

The lack of understanding of fine structures of the space $\AI(\Omega)$ constitutes one of the major obstacles in the study of the Aviles-Giga functional. Analogous issues arise in the context of a closely related micromagnetics energy \cite{ser1,ser2, AKLR02, deott1, ser3} and in the study of large deviation
principles for some stochastic processes, where the limiting equations are
scalar conservation laws \cite{BBMN10}. Roughly speaking it is expected that vector fields $m\in\AI(\Omega)$ exhibit properties similar to those enjoyed by $BV$ functions and that entropy measures $\dv\Phi(m)$ are concentrated on a one-dimensional rectifiable set on which $m$ has left and right traces. The most progress to date in this direction is due to De Lellis and  Otto \cite{ottodel1}, who showed that the points of positive one-dimensional
density of entropy measures do form an $\mathcal{H}^1$-rectifiable set $J$.  However their result leaves open concentration of entropy measures on this set $J$. Indeed, a major conjecture raised in \cite{ottodel1} is the following
\begin{con}[De Lellis-Otto]
\label{doc}
For any $m\in \AI(\Omega)$, $\dv \Phi(m)$ is supported on an $\mathcal{H}^1$ $\sigma$-finite rectifiable set $J$ for all entropies $\Phi$ and 
\begin{equation*}
\dv \Phi(m)=\lt[\eta\cdot \lt(\Phi\lt(m^{+}\rt)- (\Phi\lt(m^{-}\rt)  \rt)\rt] \mathcal{H}^1_{\lfloor J},
\end{equation*}
where $\eta$ is the unit vector normal to $J$ and $m^{\pm}$ are the traces of $m$ on the two sides of $J$.
\end{con}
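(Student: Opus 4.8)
This is the central open problem of the subject, so I can only outline the strategy I would pursue and indicate where the machinery of this paper would enter. For an entropy $\Phi$ write $\mu_\Phi:=\dv\Phi(m)$, and let $J$ be the $\mathcal{H}^1$-rectifiable set of \cite{ottodel1}, i.e.\ the set of points of positive one-dimensional density of the entropy measures of $m$. I would split the statement into two parts: (A) the identity $\mu_\Phi\rest J=\bigl[\eta\cdot(\Phi(m^+)-\Phi(m^-))\bigr]\,\mathcal{H}^1\rest J$ at $\mathcal{H}^1$-a.e.\ point of $J$; and (B) the assertion that $\mu_\Phi$ has no \emph{diffuse} part, i.e.\ no component that is absolutely continuous with respect to $\mathcal{L}^2$ and no component carried by a purely $\mathcal{H}^1$-unrectifiable set. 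Granting (A) and (B), the $\sigma$-finiteness of the support and the stated formula follow at once, since $\mu_\Phi$ has locally finite total mass and, for a suitable entropy $\Phi$, the bracket in (A) is nonzero precisely where $m^+\neq m^-$.

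Part (A) is the softer half, and close to what is already available. At $\mathcal{H}^1$-a.e.\ $x_0\in J$ the blow-ups $m_r(y):=m(x_0+ry)$ subconverge in $L^1_{\loc}$, by the structure theory of \cite{ottodel1}, to a one-dimensional profile $m_0$ --- a jump between unit vectors $m^+,m^-$ across a hyperplane $\{y\cdot\eta=0\}$. One then has to pass to the limit in the entropy production, which I would do through the kinetic formulation of \eqref{ageq3}: along such a blow-up there is no cancellation in the weak-$*$ limit, so that $\dv\Phi(m_r)\rightharpoonup\dv\Phi(m_0)=\bigl[\eta\cdot(\Phi(m^+)-\Phi(m^-))\bigr]\,\mathcal{H}^1\rest\{y\cdot\eta=0\}$. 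Together with the fact that $\mu_\Phi\rest J$ has, by \cite{ottodel1}, a density with respect to $\mathcal{H}^1\rest J$, this gives (A); in the range where it is valid, the factorization of the second part of this paper supplies a more transparent route to the same no-cancellation statement, by writing $\dv\Phi$ through the two Jin-Kohn entropies, whose productions have a fixed sign.

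Part (B) is the main obstacle, and it is precisely the gap left open by \cite{ottodel1}; I would attack the two bad scenarios separately. For an absolutely continuous component of some $\mu_\Phi$: one would want to localise so that all entropy productions become $L^p_{\loc}$ functions for some $p\in(1,\tfrac{4}{3}]$, whence $m\in B^{1/3}_{3p,\infty,\loc}$ there by the first part of this paper, and then invoke a rigidity (Liouville-type) theorem in that class --- essentially the ``weak Aviles-Giga conjecture'' recalled in the introduction, which is itself open --- to conclude that $m$ is locally constant and the component vanishes. For a component carried by an unrectifiable set: blowing up at one of its density points, one expects the Jin-Kohn factorization to force a lower bound $\nu(B_r(x_0))\gtrsim r$ along a sequence $r\to 0$, where $\nu:=|\dv\Phi_1(m)|+|\dv\Phi_2(m)|$ and $\Phi_1,\Phi_2$ are the two Jin-Kohn entropies, while unrectifiability prevents the blow-up limits from being translation-invariant in any fixed direction, and one then seeks a contradiction with $|m_0|=1$ and $\dv m_0=0$ --- e.g.\ by a compensated-compactness argument exploiting that $\{\Phi:\dv\Phi(m_0)\equiv0\}$ is a large family. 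Neither reduction is completed here; what the present paper contributes toward an eventual proof is to reduce (B), in every regime where the relevant integrability is available, to the two Jin-Kohn entropies, and in the absolutely continuous case to the single Besov estimate of its first part --- which is the natural foothold for an attack on the conjecture.
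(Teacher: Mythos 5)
The statement you are addressing is Conjecture \ref{doc}, which the paper records as an open problem of De Lellis and Otto; the paper offers no proof of it (its theorems are partial results in the direction of, and motivated by, this conjecture), and your proposal, as you yourself say, does not prove it either. So the honest assessment is that there is no proof here to evaluate against the paper's --- only a strategy outline --- and the outline, while a reasonable map of the state of the art, rests at every essential point on statements that are themselves open. In Part (A), the step ``along such a blow-up there is no cancellation in the weak-$*$ limit'' is precisely the concentration property that \cite{ottodel1} leaves open: $L^1_{\loc}$ subconvergence of the blow-ups $m_r$ to a jump profile does not by itself give convergence of $\dv\Phi(m_r)$ to $\dv\Phi(m_0)$ as measures, because mass of the entropy production can escape or appear in the limit; ruling that out is the heart of the difficulty, not a consequence of the kinetic formulation.

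In Part (B) the gaps are even more concrete. For the absolutely continuous component, your plan to ``localise so that all entropy productions become $L^p_{\loc}$ functions for some $p\in(1,\frac43]$'' has no evident implementation: an absolutely continuous entropy measure has only an $L^1_{\loc}$ density, Theorem \ref{C2} requires $p>1$, and the paper explicitly states that it cannot handle $p=1$ (neither the equivalence with Besov regularity nor even the factorization of Theorem \ref{C1} under the hypothesis $\dv\Phi(m)\in L^1_{\loc}$); indeed the introduction notes that even singularity of the entropy measures with respect to $\mathcal{L}^2$ is unknown and would constitute the first progress. Moreover, even granting the Besov regularity, the rigidity you then invoke is exactly Conjecture \ref{CC2}, which is open. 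The unrectifiable-set scenario is likewise only a heuristic (``one expects\ldots one then seeks a contradiction''). None of this is a criticism of your understanding --- you have correctly identified where the paper's results would enter an eventual proof --- but the proposal should not be mistaken for, or presented as, a proof of the conjecture.
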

It is expected that such concentration of entropy measures on the $\mathcal{H}^1$-rectifiable set $J$, if resolved, will be a crucial step towards the full proof of the Aviles-Giga conjecture. Note that very recently Marconi has resolved the analogous versions of Conjecture \ref{doc} for Burgers equation \cite{elio2} and the micromagnetics functional \cite{elio} using a powerful Lagrangian representation method.

\subsection{On the threshold regularity for the Eikonal equation}

As described in the introduction of \cite{csz}, a recurring theme in modern non-linear PDE is to understand the threshold between rigidity and flexibility that often occurs at the limiting regularity required for weak solutions of PDE to satisfy additional \em derived  equations \rm by virtue of some form of the chain rule. In 
\cite{csz} the list of examples presented includes entropy solutions of hyperbolic conservation laws and incompressible Euler equations. The authors further note that it is an interesting future direction to understand the rigidity/flexibility threshold on Besov or Sobolev scale for non-linear PDE.

The notion of entropies for the Aviles-Giga functional and the Eikonal equation is closely connected to (and indeed was inspired by) entropies for hyperbolic conservation laws. For $|m|=1$, it is sufficient to define entropies on $\mathbb{S}^1$. It is easy to check that the restrictions to $\mathbb{S}^1$ of entropies defined in \cite{mul2} satisfy
\begin{equation}
\label{cpa1.7}
e^{it}\cdot \frac{d}{dt}\Phi(e^{it})=0. 
\end{equation}
Thus, following \cite{GL} we define the set of entropies to be
\begin{equation}
\label{cpa1.9}
ENT:=\lt\{ \Phi\in C^2\lt(\mathbb{S}^1;\mathbb{R}^2\rt): \Phi\text{ satisfies }(\ref{cpa1.7}) \rt\}.
\end{equation}
The derived equations for the Eikonal equation are therefore the equations $\dv\Phi(m)\equiv 0$ for all $\Phi\in ENT$. In \cite{DeI} (building on previous work \cite{ignat}) the authors showed that for $m$ satisfying the Eikonal equation \eqref{ageq3}, the Sobolev regularity $m\in W^{\frac{1}{3},3}_{\loc}$ is sufficient to allow something like the chain rule (or more accurately a substitute for the chain rule) to hold and hence conclude that all the derived equations hold. By the fundamental result of \cite{otto}, this is strong enough information to conclude that $m$ is rigid in the sense that it has only vortex singularities; see Conjecture \ref{CC2} below for the precise statement. On the other hand, in \cite[Theorem 2.6]{GL} it is shown that for weak solutions $m$ to the Eikonal equation \eqref{ageq3}, the Besov regularity $m\in B^{\frac{1}{3}}_{3,\infty,\loc}(\Omega)$ is equivalent to $\dv \Phi(m)$ being locally finite measures for all $\Phi\in ENT$, and thus there is no rigidity for $m$ under such regularity. These results leave the question of the critical regularity threshold between rigidity/flexibility for the Eikonal equation, and our first main result provides an initial step in this direction.

Our first theorem is an $L^p$ version of the main theorem in \cite{GL}. To state this result, we first make some definitions.  Recall that we denote by $\mathcal{M}(\Omega)$ the set of finite Radon measures on $\Omega$. For $\mu\in\mathcal{M}(\Omega)$, let $\|\mu\|$ denote its total variation measure. Next we recall \cite[Definition 2.5]{GL} with some extensions for our setting:
\begin{defin}
	\label{def:KIN}
	 We say that a vector field $m$ solving \eqref{ageq3} satisfies the \emph{kinetic formulation} if there exists a Radon measure $\sigma\in\mathcal{M}_{\loc}(\Omega\times \R/2\pi\mathbb{Z})$ such that
	 \begin{equation}
	 	\label{eqinta3}
	 	e^{is}\cdot \na_x \mathds{1}_{e^{is}\cdot m(x)>0}=\partial_s \sigma\qd\text{ in }\mathcal{D}'\lt(\Omega\times \mathbb{R}/ 2\pi\mathbb{Z}\rt). 
	 \end{equation}  
 	We call the measure $\sigma$ the \emph{kinetic measure}.
 	
 	Further, we say that $m$ satisfies the \emph{$L^p$ kinetic equation} for some $1\leq p<\infty$ if it satisfies \eqref{eqinta3} and there exists a family of measures $\lt\{\sigma_x\rt\}\subset \mathcal{M}(\mathbb{R}/ 2\pi \mathbb{Z})$ for $\mathcal{L}^2$-a.e. $x\in\Omega$ such that 
 	\begin{equation}
 		\label{eq30.2}
 		\la \sigma(x, s), f(s)\zeta(x) \ra=\int_{\Omega}\lt(\int_{\mathbb{R}/ 2\pi \mathbb{Z}}  f(s) d\sigma_x(s)\rt) \zeta(x) \, dx\qquad\forall f\in C^0\lt(\mathbb{R}/ 2\pi \mathbb{Z}\rt)  ,\:\zeta\in C^0_c(\Omega)
 	\end{equation}
 	and 
 	\begin{equation}
 		\label{eq:sigmaLp}
 		\nu(x):=\|\sigma_x\|_{\mathcal{M}(\mathbb{R}/ 2\pi \mathbb{Z})}\in L^p_{\loc}(\Omega).
 	\end{equation}
\end{defin}
%
When \eqref{eq30.2} and \eqref{eq:sigmaLp} hold, following the notation of \cite[Definition 2.27]{ambrosio}, we write $\sigma=\mathcal{L}^2\otimes \sigma_x$. Our first main result is the following
\begin{thm}
\label{C2}
Let $\Omega\subset\R^2$ be an open set and $m:\Omega\rightarrow \R^2$ satisfy \eqref{ageq3}. Then for any $1<p\leq \frac 43$, the following are equivalent:
\begin{enumerate}[(A)]
\item $m\in B^{\frac{1}{3}}_{3p,\infty,\loc}(\Omega)$;
\item  $\dv\Phi(m)\in L^p_{\loc}(\Omega)$ for all $\Phi \in ENT$;
\item $m$ satisfies the $L^{p}$ kinetic equation in the sense of Definition \ref{def:KIN}.
\end{enumerate}
\end{thm}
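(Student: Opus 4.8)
The plan is to close the cycle of implications (A) $\Rightarrow$ (B) $\Rightarrow$ (C) $\Rightarrow$ (A). The implication (A) $\Rightarrow$ (B) is the standard mollification/commutator argument for the Eikonal equation: with $m_\e := m * \rho_\e$ for a fixed mollifier $\rho$, and $\Phi \in ENT$, there is the chain-rule-defect (``commutator'') estimate underlying the proof of \cite[Theorem 2.6]{GL} (and going back to \cite{mul2,ottodel1}), which exploits the defining identity \eqref{cpa1.7} of entropies and has the cubic form $|\dv \Phi(m_\e)(x)| \lesssim \e^{-1}\big(\int \rho(y)\,|m(x-\e y) - m(x)|^2\, dy\big)^{3/2}$. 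Raising this to the power $p$, using Jensen in the $y$-average (legitimate since $3p/2 \ge 1$) and Fubini, one gets $\|\dv \Phi(m_\e)\|_{L^p(\Omega')}^p \lesssim \e^{-p}\int \rho(y)\,\|m(\cdot - \e y) - m\|_{L^{3p}(\Omega'')}^{3p}\,dy$, which the Besov bound $m \in B^{1/3}_{3p,\infty,\loc}$ controls by $[m]_{B^{1/3}_{3p,\infty}(\Omega'')}^{3p}\int \rho(y)\,|y|^p\,dy$, uniformly in $\e$ (for $\Omega'\Subset\Omega''\Subset\Omega$). Since $p>1$, a subsequence of $\dv\Phi(m_\e)$ converges weakly in $L^p_{\loc}(\Omega)$, and the limit is necessarily $\dv\Phi(m)$ because $\Phi(m_\e) \to \Phi(m)$ in every $L^q_{\loc}$; this gives (B).

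For (B) $\Rightarrow$ (C): since $L^p_{\loc}(\Omega) \subset \mathcal{M}_{\loc}(\Omega)$, condition (B) in particular makes all entropy productions locally finite measures, so \cite[Theorem 2.6]{GL} supplies a kinetic measure $\sigma$ satisfying \eqref{eqinta3}. It remains to produce the disintegration $\sigma = \mathcal{L}^2 \otimes \sigma_x$ with $\nu = \|\sigma_x\| \in L^p_{\loc}$. Writing $\sigma = \lambda \otimes \tilde\sigma_x$ via the disintegration theorem (with $\lambda$ the projection of $\|\sigma\|$ onto $\Omega$ and $\|\tilde\sigma_x\| \equiv 1$), one first shows $\lambda \ll \mathcal{L}^2$: the entropy/kinetic correspondence gives $\la \sigma, \zeta \otimes \phi'\ra = -\int_\Omega \zeta\,\dv\Phi(m)\,dx$ for the entropy $\Phi$ with density $\phi$, while a short computation from $2m = \int_{\R/2\pi\Z} e^{is}\mathds{1}_{e^{is}\cdot m > 0}\,ds$ and \eqref{eqinta3} shows $\int_{\R/2\pi\Z} e^{is}\,d\tilde\sigma_x(s) = 0$ for $\lambda$-a.e.\ $x$; together these rule out a singular part of $\lambda$, on which $\tilde\sigma_x$ would be forced to vanish. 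This yields $\sigma = \mathcal{L}^2 \otimes \sigma_x$ with $\nu \in L^1_{\loc}$, and the upgrade to $\nu \in L^p_{\loc}$ is obtained by running the construction of $\sigma$ in \cite{GL} quantitatively — keeping the $L^p_{\loc}$ bounds on the $x$-densities that (B) furnishes and passing to the limit using the reflexivity of $L^p_{\loc}$ for $p>1$.

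The implication (C) $\Rightarrow$ (A) is the crux of the theorem. With $\chi_s := \mathds{1}_{e^{is}\cdot m > 0}$ one has the elementary comparison $|m(x+h) - m(x)| \simeq \int_{\R/2\pi\Z} |\chi_s(x+h) - \chi_s(x)|\,ds$, the velocity average $2m = \int_{\R/2\pi\Z} e^{is}\chi_s\,ds$, and the transport identity $e^{is}\cdot\nabla_x\chi_s = \partial_s\sigma = \mathcal{L}^2 \otimes \partial_s\sigma_x$. The naive route — raising the comparison to the power $3p$, pulling the power inside the $s$-integral and using $\chi_s \in \{0,1\}$ — only bounds $\|m(\cdot+h) - m\|_{L^{3p}(\Omega')}^{3p}$ by a constant times $\|m(\cdot+h) - m\|_{L^1(\Omega')}$, which carries no quantitative gain; all of the content is in exploiting the transport structure. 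For each $s$ one decomposes $h = h_\parallel e^{is} + h_\perp (e^{is})^\perp$: the displacement along $e^{is}$ is absorbed by the one-dimensional variation of $\chi_s$ carried by $\partial_s\sigma$, hence by $\nu \in L^p_{\loc}$, whereas the transverse displacement is controlled only through $\chi_s \in L^\infty$. Integrating in $s$ and optimizing this split — isolating the directions $e^{is}$ nearly orthogonal to $h$ from the rest — produces the regularity exponent $\tfrac13$ and the integrability exponent $3p$, and it is exactly the balancing of the Hölder/interpolation exponents in this optimization that forces the restriction $p \le \tfrac43$. This quantitative averaging lemma — an $L^p$-weighted refinement of the corresponding step in \cite[Theorem 2.6]{GL}, where only the total mass of $\sigma$ was tracked — is the principal obstacle and the technical heart of the argument.
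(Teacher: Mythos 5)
Your step $(A)\Rightarrow(B)$ is essentially the paper's argument (commutator bound $|\dv\Phi(m_\ep)|\lesssim \PPI_m^\ep$, $L^p$ bound on $\PPI_m^\ep$ from the Besov seminorm, weak compactness for $p>1$), and is fine. The other two steps have problems. For $(B)\Rightarrow(C)$, the issue is not the existence of $\sigma$ (which does follow from \cite{GL}) but the disintegration $\sigma=\mathcal L^2\otimes\sigma_x$ with $\nu(x)=\|\sigma_x\|\in L^p_{\loc}$; your "run the construction of \cite{GL} quantitatively and pass to the limit by reflexivity" is not an argument, and your absolute-continuity step is shaky because $\partial_s\sigma$ only determines $\sigma$ up to measures constant in $s$, so testing against $\zeta\otimes\phi'$ does not see all of $\sigma$. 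The paper instead applies Banach--Steinhaus to the map $f\mapsto\dv\Phi_f(m)\in L^p(\Omega)$ and then a generalized (vector-measure) Riesz representation theorem, using reflexivity of $L^p$, to realize $\sigma$ directly as an element of $L^p_w(\Omega;\mathcal M(\R/2\pi\Z))$; this is exactly the point the authors flag as delicate even in the $\mathcal M$-valued case.

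The decisive gap is $(C)\Rightarrow(A)$: you correctly identify that a quantitative, $\nu$-weighted refinement of the averaging step is "the principal obstacle and the technical heart of the argument," but you do not supply it, and the mechanism you sketch (splitting $h$ into components parallel and perpendicular to each $e^{is}$ and "optimizing") is not how the exponents actually arise. The paper's proof constructs the interaction functional
\begin{equation*}
\Delta_\alpha(x,h,e)=\int_0^{2\pi}\!\!\int_0^{2\pi}\varphi_\alpha(s-t)\sin(s-t)\,D^{he}\chi(x,t)\,D^{he}\chi(x,s)\,dt\,ds,
\end{equation*}
with $\varphi_\alpha(t)=t^\alpha$ near $0$ and $\alpha=3p-3$, proves the coercivity $\Delta_\alpha\gtrsim|D^{he}m|^{3+\alpha}=|D^{he}m|^{3p}$, and uses the kinetic equation to write $\partial_h\Delta_\alpha^\ep=I_\alpha^\ep+\na\cdot A_\alpha^\ep$. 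The source term is then estimated by $\bigl\lVert\gamma(|D^hm|^\alpha+|D^{-h}m|^\alpha)\bigr\rVert_{L^{p'}}\lVert\nu\rVert_{L^p}$, and Young's inequality closes the estimate precisely because $\alpha p'=3+\alpha=3p$; the restriction $p\le\frac43$ is the condition $\alpha\le1$ needed so that $\varphi_\alpha$ is $\alpha$-H\"older, which is what converts angular increments of the kinetic density into $|D^hm|^\alpha$. None of the choice of kernel, the coercivity lemma, or the $I_\alpha/A_\alpha$ bounds appears in your proposal, so the implication that carries the whole content of the theorem remains unproved.
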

Note that for $p>\frac 43$, we are unable to establish the implication $(C)\Longrightarrow (A)$ in Theorem \ref{C2}. However, for $p\geq \frac 43$, knowing only two special entropy productions of $m$ (those of the Jin-Kohn entropies given in \eqref{ep101} and \eqref{ep102} in the following subsection) are in $L^p_{\loc}$ is sufficient to establish the $B^{\frac 13}_{4,\infty,\loc}$ regularity for $m$ (see Proposition \ref{LLD1}). Our interest in Theorem \ref{C2} is two-fold. Firstly, concerning Conjecture \ref{doc}, for $m\in\AI(\Omega)$, it is not even known if entropy measures are singular with respect to the Lebesgue measure and a proof of this fact would represent the first progress on this long standing problem.  The following conjecture represents an even more accessible goal in this program:
\begin{con}
	\label{CC2}
	Let $m:\Omega\rightarrow \R^2$ satisfy \eqref{ageq3}. Assume $\dv\Phi(m)\in L^p_{\loc}(\Omega)$ for some $p>1$ and all $\Phi \in ENT$,
	then  $\dv \Phi(m)\equiv 0$ for all $\Phi\in ENT$, and hence $m$ is rigid, i.e. $m$ is locally Lipschitz outside a locally finite set of points, and in any neighborhood containing only one singularity $m$ forms a vortex. 
\end{con}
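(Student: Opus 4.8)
The plan is to use the regularity already guaranteed by Part~I to reach the exact threshold at which a chain‑rule substitute is known to force all entropy productions to vanish, and then to invoke the rigidity theorem of \cite{otto}. First, when $p\in(1,\frac{4}{3}]$ Theorem \ref{C2} gives, under the hypothesis $\dv\Phi(m)\in L^p_{\loc}$ for all $\Phi\in ENT$, that $m\in B^{\frac{1}{3}}_{3p,\infty,\loc}(\Omega)$, which is strictly better in integrability than the flexibility threshold $B^{\frac{1}{3}}_{3,\infty,\loc}$ of \cite[Theorem 2.6]{GL}; when $p>\frac{4}{3}$, Proposition \ref{LLD1} still yields $m\in B^{\frac{1}{3}}_{4,\infty,\loc}(\Omega)$. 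Equivalently, by Theorem \ref{C2}, $m$ satisfies the $L^p$ kinetic equation with kinetic measure $\sigma=\mathcal{L}^2\otimes\sigma_x$ and density $\nu(x)=\|\sigma_x\|_{\mathcal{M}(\mathbb{R}/2\pi\mathbb{Z})}\in L^p_{\loc}(\Omega)$. The target is to prove that this forces $\sigma_x\equiv0$ for $\mathcal{L}^2$‑a.e.\ $x$, equivalently $\dv\Phi(m)\equiv0$ for every $\Phi\in ENT$; granting this, \cite{otto} gives that $m$ is locally Lipschitz off a locally finite set and forms a vortex near each singularity.

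The heart of the argument would be a renormalization/commutator estimate. In \cite{ignat,DeI} the substitute for the chain rule, and with it the vanishing of all entropy productions, comes from a commutator whose defect is controlled by a cubic difference‑quotient functional that vanishes when $m\in W^{\frac{1}{3},3}_{\loc}=B^{\frac{1}{3}}_{3,3,\loc}$ but merely stays bounded for the wild solutions of \cite{GL}, which lie in $B^{\frac{1}{3}}_{3,\infty,\loc}$. The idea is to show that the quantitative content of $\nu\in L^p_{\loc}$ — not just the Besov regularity it implies — makes the analogous defect vanish. Mollifying $m_\e=m*\rho_\e$, one writes $\dv\Phi(m_\e)$ as the $\e$‑regularization of $\dv\Phi(m)$ plus a commutator, and seeks to bound the commutator by a $\|\nu\|_{L^p}$‑weighted quantity rather than by the $W^{\frac{1}{3},3}$ seminorm; the extra Hölder room afforded by $p>1$, combined with the way $\sigma_x$ transports the defect in the variable $s$ in \eqref{eqinta3}, should convert the $O(1)$ bound available from Besov regularity alone into an $o(1)$ bound after localization. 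A complementary route, using the second part of this paper, is to reduce via the factorization formula to the two Jin–Kohn entropy productions and to analyze the fine structure of $\sigma_x$ along the characteristics of \eqref{ageq3}, in the spirit of Marconi's Lagrangian representation \cite{elio2,elio}; indeed, once one knows — as Marconi did for Burgers' equation and micromagnetics — that $\dv\Phi(m)$ is concentrated on an $\mathcal{H}^1$‑rectifiable, hence $\mathcal{L}^2$‑null, set, the absolute continuity $\dv\Phi(m)\in L^p_{\loc}\subset L^1_{\loc}$ forces $\dv\Phi(m)\equiv0$ at once.

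The main obstacle is exactly the passage from an $O(1)$ to an $o(1)$ defect: raising only the Lebesgue index from $3$ to $3p$ leaves the third Besov index equal to $\infty$, and that index is precisely what the wild constructions of \cite{GL} exploit. On a bounded domain $B^{\frac{1}{3}}_{3p,\infty,\loc}$ embeds back into $B^{\frac{1}{3}}_{3,\infty,\loc}$ and only into $W^{\frac{1}{3}-\delta,3}_{\loc}$ for every $\delta>0$, always infinitesimally short of the endpoint $W^{\frac{1}{3},3}_{\loc}$ used in \cite{DeI}; so one cannot merely cite \cite{DeI}. A genuinely new estimate is needed — either a commutator bound quantitative in $\|\nu\|_{L^p}$ that rules out the non‑vanishing of difference quotients at small scales exhibited by wild solutions, or a Lagrangian/kinetic‑structure argument showing directly that $\sigma_x$ carries no mass for $\mathcal{L}^2$‑a.e.\ $x$. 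The presence of the hypothesis is what makes Conjecture \ref{CC2} more tractable than the general assertion that entropy productions of an arbitrary $m\in\AI(\Omega)$ are singular with respect to $\mathcal{L}^2$; but even here one of these new inputs appears indispensable, and establishing it would constitute the first substantial progress on the structure of $\AI(\Omega)$.
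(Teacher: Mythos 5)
The statement you are addressing is Conjecture \ref{CC2}; the paper does not prove it, so there is no proof of record to compare against --- the conjecture is exactly the open problem that Theorem \ref{C2} is meant to make more approachable. Your proposal, by its own admission, does not close it. The reductions you perform are correct: under the hypothesis, Theorem \ref{C2} (for $1<p\leq\frac43$) or Proposition \ref{LLD1} together with Remark \ref{R13} (for $p\geq\frac43$) yields $m\in B^{\frac13}_{3r,\infty,\loc}(\Omega)$ with $r=\min\{p,\frac43\}$, equivalently the $L^p$ kinetic formulation with $\nu\in L^p_{\loc}$, and you rightly observe that this is strictly short of the $W^{\frac13,3}_{\loc}$ threshold of \cite{DeI} because the third Besov index stays at $\infty$, so \cite{DeI} and then \cite{otto} cannot simply be cited.

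The genuine gap is that the decisive step --- showing $\nu(x)=0$ for a.e.\ $x$, equivalently $\dv\Phi(m)\equiv 0$ for all $\Phi\in ENT$ --- is never established; it is only described as requiring ``a genuinely new estimate.'' Neither of your two routes is actually carried out. For the first, no commutator bound quantitative in $\|\nu\|_{L^p}$ is written down, and it is far from clear that one exists in the form you need: the factorization \eqref{cpeq2} (Theorem \ref{C1}, available under your hypothesis) shows the entropy production is pointwise proportional to $e^{i2\theta}\cdot\dv\Sigma(m)$, i.e.\ the putative ``defect'' is exactly the quantity you are trying to prove vanishes, so no smallness is manifest from the $L^p$ bound alone. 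For the second, the step ``once one knows that $\dv\Phi(m)$ is concentrated on an $\mathcal{H}^1$-rectifiable set'' assumes for the Eikonal equation \eqref{ageq3} precisely the concentration statement of Conjecture \ref{doc}, which is open; Marconi's results you invoke concern Burgers' equation and the micromagnetics model, not \eqref{ageq3}. What you have is an accurate survey of the reductions this paper provides and of why the problem is hard, not a proof; the missing step is the entire content of the conjecture.
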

Our Theorem \ref{C2} gives equivalent formulations for $\dv\Phi(m)\in L^p_{\loc}(\Omega)$ for all $\Phi \in ENT$, in particular in terms of the Besov regularity $m\in B^{\frac{1}{3}}_{3p,\infty,\loc}(\Omega)$. Under this Besov regularity, many powerful tools become available. In particular, in Theorem \ref{C1}, we establish explicit formulas for entropy productions and the kinetic measure $\sigma$ in terms of two special entropies. It is likely that one can extract very detailed information out of the explicit formulas, which will lead to rigidity of $m$ as stated in Conjecture \ref{CC2}. 

Secondly, it is known that the Eikonal equation enjoys no rigidity for $m\in B^{\frac{1}{3}}_{3,\infty,\loc}$. Thus a consequence of (a) Theorem \ref{C2} and (b) a proof of Conjecture \ref{CC2} 
would provide the threshold of rigidity/flexibility for the Eikonal equation at the Besov scale. Specifically, let $\mathcal{S}_m$ denote the points  of approximate discontinuity of $m$ (see  \cite[Definition 3.63]{ambrosio}), then  
\begin{equation*}
(a)\,\&\, (b)\Longrightarrow 
m\in \begin{cases}   B^{\frac{1}{3}}_{3,\infty,\loc}(\Omega)  & \mathcal{S}_m\text{ can be }\mathcal{H}^1\text{-rectifiable};\\
 B^{\frac{1}{3}}_{q,\infty,\loc}(\Omega), q>3   & \mathcal{S}_m\text{ can only be isolated points and }m\\ &\text{forms a vortex around each point in }\mathcal{S}_m.    \end{cases}
\end{equation*}

\subsection{Control of all entropies by two special ones}
Our next main results explore the connections between general entropies and two special entropies. Building from earlier work of Aviles and Giga \cite{avilesgig},  Jin and Kohn \cite{JK} introduced two fundamental entropies $\Sigma_1, \Sigma_2:\R^2\to\R^2$ given by
\begin{equation}
\label{ep101}
\Sigma_1(m)=\lt(m_2\lt(1-m_1^2-\frac{m_2^2}{3}\rt), m_1\lt(1-m_2^2-\frac{m_1^2}{3}\rt)\rt)
\end{equation}
and
\begin{equation}
\label{ep102}
 \Sigma_2(m)=\lt(-m_1\lt(1-\frac{2 m_1^2}{3}\rt), m_2\lt(1-\frac{2 m_2^2}{3}\rt)\rt).
\end{equation}
These two entropies restricted to $\mathbb{S}^1$ satisfy \eqref{cpa1.7}, and thus belong to the set $ENT$ defined in \eqref{cpa1.9}. They play fundamental roles in the $\Gamma$-limit conjecture for the Aviles-Giga functional. Indeed, the conjecture in \cite{ADM} is that the $\Gamma$-limit of the Aviles-Giga functional is (up to a constant) the total mass of the entropy measure
\begin{equation*}
	\mu = \left\Vert \begin{array}{c} \dv\Sigma_1(\nabla^\perp u)\\\dv\Sigma_2(\nabla^\perp u)\end{array}\right\Vert,
\end{equation*}
which is indeed controlled by the energy, and coincides with the cubic jump cost when $\nabla u\in BV$. A necessary condition for this $\Gamma$-limit conjecture to hold is that the two special entropy measures $\dv\Sigma_j(m)$ control all entropy measures, and our remaining results are in this spirit. 

To state our next result, for $m:\Omega\to\R^2$, $x\in\Omega$ and $0<\ep<\mathrm{dist}\{x,\partial\Omega\}$, we define the function
\begin{equation}
\label{eqssc1}
\PPI_m^{\ep}(x):=\ep^{-3}\int_{B_{\ep}(0)} \lt|D^z m(x)\rt|^3 dz, 
\end{equation}
where we denote $D^z m(x)=m(x+z)-m(x)$. Let us take a moment to explain the function $\PPI_m^\e$. This function arises naturally in the control of entropy productions of $m$ satisfying \eqref{ageq3}. Indeed, letting $m_\e$ be a regularization of $m$, direct computations show that the main contribution of $\dv\Phi(m_\e)$ comes from terms of the form $|\na m_\e|\lt(1-|m_\e|^2\rt)$ for any $\Phi\in ENT$ (see \eqref{eqllkk1}). A commutator argument further gives $|\na m_\e|\lt(1-|m_\e|^2\rt)\lesssim\PPI_m^\e$. On the other hand, if $m\in B^{\frac 13}_{3p,\infty}$ for some $p>1$, then from Lemma \ref{lemp6} we have $\norm{\PPI_m^{\ep}}_{L^{p}}\lesssim |m|_{B^{\frac{1}{3}}_{3p,\infty}}^{3}$. Thus, putting everything together, if $m$ is sufficiently regular (which is the case if $m\in B^{\frac 13}_{3p,\infty}$ for some $p>1$), one can pass to the limit as $\e\to 0$ to deduce
\begin{equation}
	\label{eq:P_m^e}
	\norm{\dv\Phi(m)}_{L^p}\lesssim \norm{\PPI_m}_{L^{p}}\lesssim |m|_{B^{\frac{1}{3}}_{3p,\infty}}^{3}
\end{equation}
for some $\PPI_m$ which is a weak limit of $\PPI_m^\e$. As such, the function $\PPI_m^\e$ plays a crucial role in controlling entropy productions of $m$ by its Besov norm, and we will make extensive use of it.

In \cite{GL}, Ghiraldin and Lamy constructed a special class of entropies $\{\Phi_f\}\subset ENT$ parameterized by $f\in C^{0}(\R/2\pi\mathbb{Z})$ (see \eqref{eqplm1}--\eqref{eqplm1.5} in Section \ref{profthm3A}). They proved the existence of the
kinetic measure $\sigma$ (recalling (\ref{eqinta3})) for any $m$ satisfying \eqref{ageq3} and $\dv \Phi_f(m)\in \mathcal{M}_{\loc}(\Omega)$ for all $f\in C^0(\mathbb R/ 2\pi \mathbb{Z})$, i.e.\ the family of entropies $\lt\{\Phi_f\rt\}$ is rich enough to generate the kinetic equation \eqref{eqinta3}. Our next result gives explicit formulas for $\dv\Phi_f(m)$ and the kinetic measure $\sigma$ in terms of $\dv\Sigma_j(m)$, $j=1, 2$, under suitable assumptions. 

\begin{thm}
\label{C1}
Let $\Omega\subset\R^2$ be an open set and $m:\Omega\rightarrow \R^2$ satisfy \eqref{ageq3}, and let $\PPI_m^{\ep}$ be defined by \eqref{eqssc1}. Assume for some sequence $\ep_k\rightarrow 0$ and $1\leq p<\infty$, there exists $\PPI_m\in L^p_{\loc}(\Omega)$ such that
\begin{equation}
\label{cpeq1.23}
\PPI_m^{\ep_k}\rightharpoonup \PPI_m\qd\text{ in }L^p_{\loc}(\Omega).
\end{equation}
Then  $\dv \Phi_f(m)\in L^{p}_{\loc}(\Omega)$ for all $f\in C^0\lt(\mathbb{R}/ 2\pi \mathbb{Z} \rt)$, where $\Phi_f\in ENT$ is defined in \eqref{eqplm1}. Further, it explicitly holds
\begin{align}
\label{cpeq2}
\dv \Phi_f(m)=\frac{1}{2}\lt(f\lt(\theta+\frac{\pi}{2}\rt)+f\lt(\theta-\frac{\pi}{2}\rt)-2\la f,1\ra \rt) e^{i 2\theta}\cdot \dv \Sigma(m)\qd\text{ a.e. in }\Omega,
\end{align}
where $\theta: \Omega\to[0,2\pi)$ satisfies $m(x)=e^{i\theta(x)}$ for a.e. $x\in\Omega$, $\Sigma(m)=\lt(\Sigma_1(m),\Sigma_2(m)\rt)$ and $\langle f, g\rangle = \frac{1}{2\pi}\int_0^{2\pi}f g$. Equivalently, \eqref{cpeq2} can be formulated as the disintegration of the kinetic measure $\sigma=\mathcal{L}^2\otimes\sigma_x$, where 
\begin{equation}
\label{abeqa1.5}
\sigma_x=\frac{1}{2}\lt(\delta_{\theta(x)+\frac{\pi}{2}}+\delta_{\theta(x)-\frac{\pi}{2}}-\frac{1}{\pi}\mathcal{L}^1\rt)e^{i2\theta(x)}\cdot\dv\Sigma(m)(x) \qd\text{ for a.e. } x\in\Omega.
\end{equation} 
\end{thm}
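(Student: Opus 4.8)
The plan is to work at the level of a mollification $m_\e = m * \rho_\e$ and identify the limiting structure of $\dv\Phi_f(m_\e)$ as $\e\to 0$. First I would record the algebraic identity that any $\Phi\in ENT$, when evaluated on the unit circle, decomposes into a Fourier series whose modes are built out of the two Jin-Kohn entropies $\Sigma_1,\Sigma_2$; concretely one checks that $e^{i2\theta}\cdot\dv\Sigma(m_\e)$ captures, up to lower-order terms, the single scalar quantity $|\na m_\e|(1-|m_\e|^2)$ weighted by a trigonometric factor in $\theta_\e$ (where $m_\e/|m_\e| = e^{i\theta_\e}$ away from the vortex set). The entropy $\Phi_f$ of Ghiraldin-Lamy is, by construction \eqref{eqplm1}, an explicit superposition of elementary entropies indexed by the ``kink'' direction, so $\dv\Phi_f(m_\e)$ is obtained from the same scalar quantity against the kernel $\tfrac12(f(\theta+\tfrac\pi2)+f(\theta-\tfrac\pi2)-2\la f,1\ra)$. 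The point is that both sides of \eqref{cpeq2}, before passing to the limit, are \emph{pointwise} controlled by $\PPI_m^\e$ via the commutator estimate alluded to after \eqref{eqssc1} (the inequality $|\na m_\e|(1-|m_\e|^2)\lesssim \PPI_m^\e$), together with the fact that $\na m_\e \cdot m_\e = \tfrac12\na(|m_\e|^2)$ and $\dv m_\e$ is small in the appropriate sense since $\dv m = 0$.

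The second step is the limit passage. The hypothesis \eqref{cpeq1.23} gives $\PPI_m^{\e_k}\rightharpoonup \PPI_m$ in $L^p_{\loc}$, hence the family $\{\PPI_m^{\e_k}\}$ is equi-integrable; combined with the pointwise domination this yields weak-$L^p_{\loc}$ compactness of $\{\dv\Phi_f(m_{\e_k})\}$ and of $\{\dv\Sigma_j(m_{\e_k})\}$, and identifies the limits as $\dv\Phi_f(m)$, $\dv\Sigma_j(m)$ in $\mathcal{D}'$. To upgrade the weak convergence to the pointwise product identity \eqref{cpeq2} I would exploit that the trigonometric coefficient in front of $\dv\Sigma(m_\e)$ depends only on $\theta_\e$, which converges a.e.\ to $\theta$ (this is where one uses that on the set where $|\na m|(1-|m|^2)$ does not vanish, $m$ has a well-defined direction; off the vortex set $m_\e\to m$ locally uniformly). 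Since the oscillating factor $e^{i2\theta_\e}$ and the kernel evaluated at $\theta_\e$ converge boundedly a.e.\ while $\dv\Sigma(m_{\e_k})$ converges weakly and is dominated, a product/Vitali argument (or a Young-measure bookkeeping against test functions $\zeta(x)\psi(\theta(x))$) gives \eqref{cpeq2}. Finally, the kinetic reformulation \eqref{abeqa1.5} is a matter of unwinding definitions: by \eqref{eqinta3} and the Ghiraldin-Lamy construction, $\la\sigma, f(s)\zeta(x)\ra = \int \dv\Phi_f(m)\,\zeta\,dx$ for all $f$, so substituting \eqref{cpeq2} and reading off the dependence on $f$ — noting $f\mapsto \tfrac12(f(\theta+\tfrac\pi2)+f(\theta-\tfrac\pi2)-2\la f,1\ra)$ is exactly integration of $f$ against $\tfrac12(\delta_{\theta+\pi/2}+\delta_{\theta-\pi/2}-\tfrac1\pi\mathcal{L}^1)$ — yields the stated disintegration, and the absolute continuity $\sigma = \mathcal{L}^2\otimes\sigma_x$ with $\nu\in L^p_{\loc}$ follows from $\dv\Sigma(m)\in L^p_{\loc}$.

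I expect the main obstacle to be the limit passage in the \emph{product} $(\text{kernel in }\theta_\e)\cdot\dv\Sigma(m_{\e_k})$: weak convergence of one factor and bounded a.e.\ convergence of the other is not in general enough to pass to the limit in the product unless one controls the concentration of $\dv\Sigma(m_{\e_k})$. The resolution should come from the domination by $\PPI_m^{\e_k}$ and its equi-integrability from \eqref{cpeq1.23}: this forces $\{\dv\Sigma(m_{\e_k})\}$ to be equi-integrable, ruling out concentration, so the product passes to the limit by Vitali. A secondary technical point is handling the vortex set of $m$ (where $\theta$ is undefined) — but there $1-|m_\e|^2$ and the entropy productions vanish in the limit, and the formula \eqref{cpeq2} holds trivially a.e.\ on that set since $\dv\Sigma(m)$ itself is supported away from it in the relevant sense.
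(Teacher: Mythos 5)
Your overall skeleton — regularize, compute $\dv\Phi_f(m_\e)$ in terms of $\dv\Sigma_j(m_\e)$, and pass to the limit using the domination by $\PPI_m^{\e_k}$ and its equi-integrability (Dunford--Pettis/Egorov/Vitali) — is exactly what the paper does in Lemmas \ref{lembite} and \ref{lemp5}, and your diagnosis of the product limit-passage issue is correct and correctly resolved. However, there is a genuine gap at the heart of your argument: you assert that $\dv\Phi_f(m_\e)$ reduces to ``the same scalar quantity'' against the kernel $\tfrac12(f(\theta+\tfrac\pi2)+f(\theta-\tfrac\pi2)-2\la f,1\ra)$ times $e^{i2\theta}\cdot\dv\Sigma(m)$, but the actual computation does not give this. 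The two quantities $\dv\Sigma_1(m_\e)=(\partial_1 m_{\e 2}+\partial_2 m_{\e 1})(1-|m_\e|^2)$ and $\dv\Sigma_2(m_\e)=(\partial_2 m_{\e 2}-\partial_1 m_{\e 1})(1-|m_\e|^2)$ are two independent scalars, and for a (harmonically extended) entropy the exact identity is
\begin{equation*}
\dv\Phi^{\varphi}(m)=A^{\varphi}_1(m)\, e^{i2\theta}\cdot\dv\Sigma(m)+ A^{\varphi}_2(m)\, i e^{i2\theta}\cdot\dv\Sigma(m),
\end{equation*}
with \emph{two} nontrivial components (Lemma \ref{lemp5}). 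The statement \eqref{cpeq2} holds only because the conjugate component $i e^{i2\theta}\cdot\dv\Sigma(m)$ vanishes a.e., and this is the most substantial step of the proof (Proposition \ref{L17}). It is not a computation: the paper proves it by contradiction, observing that $\AI_2$ is the Fourier multiplier $-\tfrac{|k|}{2}(k^2-1)$ (a third derivative composed with the conjugate-function operator), so if $i e^{i2\theta}\cdot\dv\Sigma(m)(x_0)\neq 0$ at a generic point, the pointwise bound $|\dv\Phi^{E\psi}(m)|\lesssim\|\psi\|_{C^3}\PPI_m$ from Lemma \ref{p:controlent} would force $\|\AI_2\psi\|_{L^\infty}\lesssim\|\psi\|_{C^3}$ for all $\psi$, contradicting the unboundedness of the Hilbert transform from $C^0$ to $L^\infty$. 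Nothing in your proposal supplies this step, and without it you only obtain a two-term formula, not \eqref{cpeq2}.

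A secondary but real omission: $\Phi_f$ is defined only on $\mathbb S^1$, so to act it on $m_\e$ you must choose an extension to $\overline B_1$, and the kernel in \eqref{cpeq2} is not read off from \eqref{eqplm1} directly — the paper constructs the specific harmonic extension $\xi_f$ (Proposition \ref{p:harmext}, with the mode-by-mode computations of Appendix~\ref{a:compharmext}) and then computes $\AI_1\xi_{f\lfloor\mathbb S^1}$ (Lemma \ref{LLAA8}) to identify the kernel. You assert the kernel rather than derive it; this part is routine Fourier bookkeeping once the factorization and the vanishing of the conjugate component are in place, but it cannot be skipped.
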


\begin{rem}
	By Lemmas \ref{lemp6}, \ref{p:controlent} and Theorem \ref{C2}, the assumption \eqref{cpeq1.23} is equivalent to $\dv\Phi(m)\in L^p_{\loc}(\Omega)$ for all $\Phi \in ENT$ and equivalent to $m\in B^{\frac 13}_{3p,\infty,\loc}(\Omega)$ for $1<p\leq \frac 43$. When $p>\frac 43$, Lemmas \ref{lemp6} and \ref{p:controlent} imply that 
	\begin{equation*}
	m\in B^{\frac 13}_{3p,\infty,\loc}(\Omega)\Longrightarrow \eqref{cpeq1.23}\Longrightarrow \dv\Phi(m)\in L^p_{\loc}(\Omega) \text{ for all } \Phi \in ENT,
	\end{equation*}
	however we are unable to establish equivalence. Nevertheless, the conclusions of Theorem \ref{C1} still hold under the assumption $\dv\Phi(m)\in L^p_{\loc}(\Omega)$ for all $\Phi \in ENT$ when $p>\frac 43$; see Theorem \ref{C0} below. When $p=1$, we only have the implication $\eqref{cpeq1.23}\Longrightarrow \dv\Phi(m)\in L^1_{\loc}(\Omega)$ for all $ \Phi \in ENT$ by Lemma \ref{p:controlent}. Again we are unable to establish equivalence in this case, nor do we have a proof of Theorem \ref{C1} with \eqref{cpeq1.23} replaced by the assumption $\dv\Phi(m)\in L^1_{\loc}(\Omega)$ for all $\Phi \in ENT$. 
\vspace{.05in}

Finally note that after writing up our factorization result, we became aware of an upcoming similar but stronger result being established by Elio Marconi \cite{marconi20} by use of the theory of ``Lagrangian representation" \cite{biamar, biamar2, marconi19,elio2,elio}.
\end{rem}

As a consequence of the explicit formula \eqref{cpeq2}, the entropy production $\dv\Phi_f(m)$ is controlled by $\dv\Sigma(m)$ in a very precise manner for the class of entropies $\{\Phi_f\}$ under the assumption \eqref{cpeq1.23}.  This formula could potentially have wider applications in the study of the more ``regular'' part of entropy measures for general $m\in\AI(\Omega)$, i.e. the part concentrated outside the $\mathcal{H}^1$-rectifiable set. Note that by similar methods to the proof of Theorem \ref{C2}, it is possible to show that $\lt|e^{i 2\theta}\cdot \dv \Sigma\lt(m\rt)\rt|\geq \pi^{-5}\PPI_m$ a.e. in $\Omega$. As a somewhat direct application of the above Theorems \ref{C2} and \ref{C1}, we have
 
\begin{thm}
\label{C0}
Let $\Omega\subset\R^2$ be an open set and $m:\Omega\rightarrow \R^2$ satisfy \eqref{ageq3}. Assume either 
\begin{equation}
\label{eqhyp1}
\dv\Phi(m)\in L^p_{\loc}(\Omega)\qd\text{ for some }p>1\text{ and all }\Phi\in ENT
\end{equation}
or 
\begin{equation}
\label{eqhyp2}
 \dv \Sigma_j(m)\in L^p_{\loc}(\Omega)\qd\text{ for some }p\geq\frac{4}{3}\text{ and }j=1, 2.
\end{equation}
Then $\dv \Phi_f(m)\in L^{p}_{\loc}(\Omega)$ for all $f\in C^0\lt(\mathbb{R}/ 2\pi \mathbb{Z} \rt)$, and further \eqref{cpeq2} and \eqref{abeqa1.5} hold true. 
\end{thm}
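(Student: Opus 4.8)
\textbf{Proof proposal for Theorem \ref{C0}.}

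The plan is to reduce Theorem \ref{C0} to Theorem \ref{C1} by verifying that each of the hypotheses \eqref{eqhyp1} and \eqref{eqhyp2} implies the weak-convergence assumption \eqref{cpeq1.23}, i.e.\ that some weak $L^p_{\loc}$ limit $\PPI_m$ of a subsequence of $\PPI_m^{\ep}$ exists; once \eqref{cpeq1.23} is in place, the formulas \eqref{cpeq2} and \eqref{abeqa1.5} follow verbatim from Theorem \ref{C1}. The main work is therefore an \emph{a priori} bound: I would show that under either hypothesis, $\{\PPI_m^{\ep}\}_{\ep}$ is bounded in $L^p_{\loc}(\Omega)$, so that weak compactness of bounded sets in $L^p_{\loc}$ (for $p>1$) yields a weakly convergent subsequence along some $\ep_k\to 0$. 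Note that for the conclusion of Theorem \ref{C1} one only needs \emph{some} such $\PPI_m$, so the particular subsequence is immaterial; in fact, once the factorization \eqref{cpeq2} holds for one weak limit, it pins down $\PPI_m$ (up to the a.e.\ comparison $|e^{i2\theta}\cdot\dv\Sigma(m)|\ge \pi^{-5}\PPI_m$ already noted in the text), so the limit is essentially unique.

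For the case \eqref{eqhyp1}, the cleanest route is to invoke Theorem \ref{C2}: for $1<p\le \tfrac43$, hypothesis \eqref{eqhyp1} is equivalent to $m\in B^{1/3}_{3p,\infty,\loc}(\Omega)$, and then Lemma \ref{lemp6} gives $\norm{\PPI_m^{\ep}}_{L^p(\Omega')}\lesssim |m|^3_{B^{1/3}_{3p,\infty}(\Omega'')}$ on compactly contained subdomains, uniformly in $\ep$; weak compactness then applies. For $p>\tfrac43$, Theorem \ref{C2} is not available in the needed direction, so instead I would run the commutator estimate directly: letting $m_\ep$ denote a standard mollification of $m$ at scale $\ep$, the identity \eqref{eqllkk1} expresses $\dv\Phi(m_\ep)$, for $\Phi\in ENT$, as a main term of the form $|\na m_\ep|(1-|m_\ep|^2)$ plus controlled remainders, and the commutator argument referenced after \eqref{eqssc1} gives the pointwise bound $|\na m_\ep|(1-|m_\ep|^2)\lesssim \PPI_m^{\ep}$. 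Conversely — and this is the direction that uses \eqref{eqhyp1} — the same computation shows $\PPI_m^{\ep}$ is controlled, up to vanishing error as $\ep\to 0$, by finitely many entropy productions $\dv\Phi_j(m_\ep)$ for a fixed finite collection of entropies $\Phi_j\in ENT$ spanning the relevant quadratic expressions (this is exactly the mechanism behind Lemma \ref{p:controlent}); since each $\dv\Phi_j(m)\in L^p_{\loc}$ by hypothesis and $\dv\Phi_j(m_\ep)\to \dv\Phi_j(m)$ in the appropriate sense, one obtains the uniform $L^p_{\loc}$ bound on $\PPI_m^{\ep}$. Then proceed as before. Alternatively, and more economically, one may simply cite Lemmas \ref{lemp6} and \ref{p:controlent} together with the chain $m\in B^{1/3}_{3p,\infty,\loc}\Rightarrow \eqref{cpeq1.23}\Rightarrow \dv\Phi(m)\in L^p_{\loc}$ recorded in the Remark after Theorem \ref{C1}, noting that \eqref{eqhyp1} already furnishes the Besov regularity via the $p>1$ part of that discussion.

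For the case \eqref{eqhyp2} with $p\ge \tfrac43$, I would first upgrade the two special entropy productions to full Besov regularity using Proposition \ref{LLD1}: knowing $\dv\Sigma_1(m),\dv\Sigma_2(m)\in L^p_{\loc}$ for $p\ge\tfrac43$ forces $m\in B^{1/3}_{4,\infty,\loc}(\Omega)$. This is enough to guarantee, via Lemma \ref{lemp6}, that $\PPI_m^{\ep}$ is bounded in $L^{4/3}_{\loc}(\Omega)$ uniformly in $\ep$, hence a weak $L^{4/3}_{\loc}$ limit $\PPI_m$ exists along a subsequence, which gives \eqref{cpeq1.23} with exponent $\tfrac43$; Theorem \ref{C1} (applied with $p=\tfrac43$) then yields the factorization formulas \eqref{cpeq2}, \eqref{abeqa1.5}. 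Finally, to recover $\dv\Phi_f(m)\in L^p_{\loc}$ for the \emph{given} $p\ge\tfrac43$ (not merely $p=\tfrac43$), I would read off integrability directly from \eqref{cpeq2}: the prefactor $\tfrac12\big(f(\theta+\tfrac\pi2)+f(\theta-\tfrac\pi2)-2\la f,1\ra\big)e^{i2\theta}$ is bounded by $2\norm{f}_\infty$, so $|\dv\Phi_f(m)|\le 2\norm{f}_\infty\,|\dv\Sigma(m)|$ a.e.\ in $\Omega$, and the right-hand side is in $L^p_{\loc}$ by \eqref{eqhyp2}. The main obstacle is the uniform-in-$\ep$ $L^p_{\loc}$ bound on $\PPI_m^{\ep}$ under \eqref{eqhyp1} when $p>\tfrac43$ — precisely the regime where Theorem \ref{C2}'s implication $(C)\Rightarrow(A)$ fails — which is why I lean on the direct commutator/entropy-spanning argument (equivalently on Lemma \ref{p:controlent}) rather than on Besov equivalence there; everything else is a matter of chaining the cited lemmas and reading constants off the explicit formula.
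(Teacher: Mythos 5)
Your treatment of hypothesis \eqref{eqhyp2} and of hypothesis \eqref{eqhyp1} with $1<p\le\frac43$ coincides with the paper's proof: Proposition \ref{LLD1} (resp.\ Theorem \ref{C2}) gives the Besov regularity, Lemma \ref{lemp6} plus weak compactness gives \eqref{cpeq1.23}, Theorem \ref{C1} gives the formulas, and under \eqref{eqhyp2} the $L^p_{\loc}$ membership of $\dv\Phi_f(m)$ is read off from \eqref{cpeq2} exactly as you do. The problem is your handling of \eqref{eqhyp1} when $p>\frac43$.

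There you assert that $\PPI_m^{\ep}$ is ``controlled, up to vanishing error, by finitely many entropy productions $\dv\Phi_j(m_\ep)$'' and that this is ``exactly the mechanism behind Lemma \ref{p:controlent}.'' It is not: Lemma \ref{p:controlent} and the commutator argument run in the opposite direction only, namely $|\dv\Phi(m_\ep)|\lesssim|\na m_\ep|\,|1-|m_\ep|^2|\lesssim\PPI_m^\ep$. No reverse inequality bounding $\PPI_m^\ep$ (which is $\ep^{-3}\int_{B_\ep}|D^zm|^3$, not $|\na m_\ep|(1-|m_\ep|^2)$) by entropy productions is established anywhere in the paper, and the pointwise comparison $|e^{i2\theta}\cdot\dv\Sigma(m)|\ge\pi^{-5}\PPI_m$ mentioned after Theorem \ref{C1} presupposes that the weak limit $\PPI_m$ already exists, so it cannot be used to bootstrap the bound. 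Your ``alternative'' route is also unavailable: for $p>\frac43$ the implication $\dv\Phi(m)\in L^p_{\loc}$ for all $\Phi\in ENT$ $\Longrightarrow m\in B^{1/3}_{3p,\infty,\loc}$ is precisely the missing direction $(C)\Rightarrow(A)$ (equivalently $(B)\Rightarrow(A)$) that the paper states it cannot prove beyond $p=\frac43$. The gap is easily repaired, and the repair is what the paper does: since $L^p_{\loc}\subset L^{4/3}_{\loc}$ on compact subsets, hypothesis \eqref{eqhyp1} with $p>\frac43$ implies it with exponent $\frac43$; apply Theorem \ref{C2} at that exponent to get $m\in B^{1/3}_{4,\infty,\loc}$, hence \eqref{cpeq1.23} in $L^{4/3}_{\loc}$, which is all Theorem \ref{C1} needs (it works for any $1\le p<\infty$), and the identities \eqref{cpeq2}, \eqref{abeqa1.5} are a.e.\ statements independent of the integrability exponent. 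The remaining conclusion $\dv\Phi_f(m)\in L^p_{\loc}$ for the original $p$ is then immediate from \eqref{eqhyp1} itself, because each $\Phi_f$ belongs to $ENT$ — so the uniform $L^p_{\loc}$ bound on $\PPI_m^\ep$ that you flag as ``the main obstacle'' is never needed.
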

The statements of Theorems \ref{C1} and \ref{C0} are for the class of entropies $\{\Phi_f\}$ essentially because this is a wide enough class to generate the kinetic measure and the factorization has a neater form. In Proposition \ref{L17} we also establish the corresponding factorization formula for sufficiently regular entropies $\Phi\in ENT$. Note that the conclusions of Theorem \ref{C0} under the assumption \eqref{eqhyp2} is a result to control \emph{all} entropies by the two special entropies $\Sigma_j$ in the strong sense that we only need $\dv \Sigma_j(m)\in L^p_{\loc}(\Omega)$ to conclude that $\dv \Phi_f(m)\in L^{p}_{\loc}(\Omega)$ for all $\Phi_f$ with the additional explicit formula \eqref{cpeq2}. This is a strong extension of the main result in \cite{LP}, which can be formulated as an immediate corollary:
\begin{cor}[\cite{LP}]
\label{C01}
Let $\Omega\subset\R^2$ be an open set and $m:\Omega\rightarrow \R^2$ satisfy \eqref{ageq3}. Assume $\dv \Sigma_j(m)= 0$ in $\mathcal{D}'(\Omega)$ for $j=1,2$, then $\dv\Phi_f(m)=0$ in $\mathcal{D}'(\Omega)$ for all $f\in C^0\lt(\mathbb{R}/ 2\pi \mathbb{Z} \rt)$  and the kinetic measure $\sigma$ vanishes. 
\end{cor}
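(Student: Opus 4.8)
The plan is to derive Corollary \ref{C01} directly from Theorem \ref{C0} by verifying that its hypothesis \eqref{eqhyp2} holds trivially in the rigid case, and then reading off the conclusion. Concretely, if $\dv\Sigma_j(m)=0$ in $\mathcal{D}'(\Omega)$ for $j=1,2$, then in particular $\dv\Sigma_j(m)\in L^p_{\loc}(\Omega)$ for \emph{every} $p\in[1,\infty)$ — the zero function lies in every local Lebesgue space — so we may apply Theorem \ref{C0} with, say, $p=\frac 43$ (or any $p\geq\frac 43$). Theorem \ref{C0} then immediately gives $\dv\Phi_f(m)\in L^{p}_{\loc}(\Omega)$ for all $f\in C^0(\mathbb R/2\pi\mathbb Z)$ together with the explicit factorization formula \eqref{cpeq2}. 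Since the right-hand side of \eqref{cpeq2} is $\tfrac12\big(f(\theta+\tfrac\pi2)+f(\theta-\tfrac\pi2)-2\la f,1\ra\big)\,e^{i2\theta}\cdot\dv\Sigma(m)$ and $\dv\Sigma(m)=(\dv\Sigma_1(m),\dv\Sigma_2(m))=(0,0)$ as distributions, and since by Theorem \ref{C0} this factorization is an a.e.\ identity between $L^p_{\loc}$ functions, we conclude $\dv\Phi_f(m)=0$ a.e., hence $\dv\Phi_f(m)=0$ in $\mathcal{D}'(\Omega)$ for all such $f$.

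For the vanishing of the kinetic measure, I would use the second, equivalent formulation in Theorem \ref{C1}/\ref{C0}, namely the disintegration \eqref{abeqa1.5}: $\sigma=\mathcal L^2\otimes\sigma_x$ with
\[
\sigma_x=\tfrac12\lt(\delta_{\theta(x)+\frac\pi2}+\delta_{\theta(x)-\frac\pi2}-\tfrac1\pi\mathcal L^1\rt)e^{i2\theta(x)}\cdot\dv\Sigma(m)(x)\qd\text{for a.e. }x\in\Omega.
\]
Since $\dv\Sigma(m)=0$ a.e.\ in $\Omega$ (again, the zero distribution has zero $L^p_{\loc}$ density), the scalar factor $e^{i2\theta(x)}\cdot\dv\Sigma(m)(x)$ vanishes for a.e.\ $x$, so $\sigma_x=0$ for a.e.\ $x$, and therefore $\sigma=\mathcal L^2\otimes\sigma_x=0$ as a Radon measure on $\Omega\times\mathbb R/2\pi\mathbb Z$. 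Alternatively, one can bypass the disintegration formula: once $\dv\Phi_f(m)=0$ for all $f\in C^0(\mathbb R/2\pi\mathbb Z)$, the richness of the family $\{\Phi_f\}$ established in \cite{GL} (that $\{\Phi_f\}$ generates the kinetic equation \eqref{eqinta3}) forces the measure $\sigma$ appearing on the right of \eqref{eqinta3} to satisfy $\partial_s\sigma=0$; combined with $\sigma$ being a finite (locally) Radon measure with $\sigma(x,\cdot)$ having no mass at infinity in the periodic variable, this yields $\sigma\equiv 0$. I would present the disintegration argument as the primary one since \eqref{abeqa1.5} is stated as part of the conclusion of Theorem \ref{C0}.

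There is essentially no obstacle here: the corollary is a pure specialization, and the only thing to be careful about is the logical direction — we are \emph{using} Theorem \ref{C0}, not reproving \cite{LP}. The one mild point worth a sentence is that Theorem \ref{C0} requires $p\geq\frac 43$ under hypothesis \eqref{eqhyp2}, which is why I pick $p=\frac 43$ rather than trying to take $p=1$; the rigid hypothesis $\dv\Sigma_j(m)=0$ is of course far stronger than $L^{4/3}_{\loc}$ membership, so this causes no loss. I would also remark, for the reader, that this recovers \cite{LP} and in fact strengthens it, since \cite{LP} concluded only that the two special entropy productions control the others in a qualitative (rigidity) sense, whereas here we additionally obtain the explicit formula \eqref{cpeq2} and the vanishing of $\sigma$ in one stroke.
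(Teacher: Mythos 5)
Your proposal is correct and takes essentially the same route the paper intends: the paper presents Corollary \ref{C01} as an immediate consequence of Theorem \ref{C0} (applied under hypothesis \eqref{eqhyp2}, which the rigid assumption $\dv\Sigma_j(m)=0$ trivially satisfies for any $p\geq\frac 43$), with $\dv\Phi_f(m)=0$ and $\sigma=0$ read off from the factorization \eqref{cpeq2} and the disintegration \eqref{abeqa1.5}. Your extra remarks on the choice $p=\frac43$ and on the alternative route via the kinetic equation are sound but not needed.
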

When $\sigma$ vanishes, the right-hand side of the kinetic equation \eqref{eqinta3} also vanishes. It then follows from the main result of \cite{otto} that $m$ is rigid in the sense of Conjecture \ref{CC2}, and this recovers \cite[Theorem 3]{LP}. Corollary \ref{C01} and Theorem \ref{C0} (in particular the formula \eqref{cpeq2}) give strong evidence towards the $\Gamma$-limit for the Aviles-Giga functional conjectured in \cite{ADM}. 
\nl

\noindent\bf  Acknowledgments. \rm We would like to extend our deep thanks to Xavier Lamy for innumerable very helpful conversations and specifically for the proof of Proposition  \ref{BimplyC} and Lemmas \ref{lem:sigmaLp} and \ref{p:sigmaLp}, and for most of the ideas of the proof of Proposition \ref{LLD1} and Lemmas \ref{LB16} and
\ref{LLAA8}. The proof of Proposition \ref{BimplyC} helps strengthen an earlier version of our Theorem \ref{C2}. Also note that Proposition \ref{p:harmext} (including Lemmas \ref{l:Phifk} and \ref{l:extPhifk}) was originally in the first posted version of \cite{llp} and as such is joint work of the three of us. A. L. gratefully acknowledges the support of the Simons foundation, collaboration grant \#426900.

%
%

\section{Proof sketch}

\subsection{Extension of entropies to $\overline B_1$ and commutator argument} 
\label{sec:keyideas}

A fundamental idea that we will use throughout is to try and understand $\dv \Phi(m)$ by understanding $\lim_{\ep\rightarrow 0} \dv \Phi\lt(m_{\ep}\rt)$ for a regularization $m_\ep$ of $m$. By definition, entropies are designed to provide additional derived equations   via the chain rule and as such applying them to $m_{\ep}$ is a natural idea. However for $\Phi\in ENT$, it is necessary to extend the definition to $\overline B_1$ in order to act them on $m_\e$. One natural way to do this is to extend $\Phi$ through a \emph{radial extension} (see \eqref{eqprin11}). Another way is through the original definition of entropies in \cite{mul2}. Here entropies are defined in $\R^2$ and obtained from smooth functions $\varphi$ via the formula
\begin{equation*}
	\Phi^{\bp{\varphi}}(z)=\varphi(z)z +((iz)\cdot\nabla\varphi(z))iz\qquad\forall z\in\R^2.
\end{equation*}
The underlying principle is, through different extensions, we extract different information from the limit of $\dv \Phi\lt(m_{\ep}\rt)$, and when pieced together, the information has strong consequences for the structure of $\dv \Phi\lt(m\rt)$.

In order to pass to the limit in $\dv \Phi\lt(m_{\ep}\rt)$, an important trick is the commutator argument of \cite{titi} that was first introduced to our setting in \cite{DeI}, and used later in \cite{LP,GL,llp} with extensions. As explained above \eqref{eq:P_m^e}, this commutator argument allows to control $\dv \Phi\lt(m_{\ep}\rt)$ and pass to the limit as $\ep\to 0$ to establish an estimate like \eqref{eq:P_m^e} under sufficient regularity of $m$. This is a crucial idea in many of our arguments.

\subsection{Ideas in the proof of Theorem \ref{C2}} 
The proof of Theorem \ref{C2} goes in the loop $(A)\Longrightarrow (B)$, $(B)\Longrightarrow (C)$ and $(C)\Longrightarrow (A)$ and uses many of the ideas in \cite{GL}. The first two steps use rather straightforward ideas, and the step $(C)\Longrightarrow (A)$ is the most involved requiring significant refinements of the estimates used in \cite{GL}. For this last step, the starting point is to create a function 
$\Delta_{\alpha}$ (as opposed to the $\Delta$ used in \cite{GL}) that provides the right coercivity estimate towards the goal of $B^{\frac{1}{3}}_{3p,\infty}$ regularity; see Lemma \ref{LB16}. On the other hand, as observed in \cite{GL}, (a regularized) $\Delta_{\alpha}$ enjoys a nice identity due to the kinetic equation \eqref{eqinta3} (see \eqref{partial_h}). Then using more delicate arguments based on many ideas from \cite{GL} (what the authors called the \em interaction estimate \em due to Varadhan (see \cite[Chapter 22]{tartar}) as used in \cite{golse,gold}), we establish in Lemmas \ref{lem:I_alpha} and \ref{lem:A_alpha} appropriate upper bound estimate for $\Delta_\alpha$, which, coupled with the coercivity established in Lemma \ref{LB16}, gives the desired $B^{\frac{1}{3}}_{3p,\infty}$ regularity. 

\subsection{Ideas in the proof of Theorem \ref{C1}} 
Our starting point is to use computations from \cite{llp} to establish in Lemma \ref{lemp5} the formula
	\begin{equation}\label{ep1121}
	\dv\Phi(m)=A^{\Phi}_1(m) e^{i2\theta}\cdot\dv\Sigma(m)+ A^{\Phi}_2(m) i e^{i2\theta}\cdot\dv\Sigma(m) \qd\text{ a.e. in }\Omega
\end{equation}
for general entropies $\Phi$. To this end, we use ideas presented in Subsection \ref{sec:keyideas}, namely, using regularization of $m$ and appropriate extension of $\Phi$ to $\overline B_1$ to compute $\dv\Phi(m_\e)$, and passing to the limit through a commutator argument. Such ideas are not hard to implement in the case $p>1$. In the case $p=1$, it is a bit more involved and the technical aspect is handled in  Lemma \ref{lembite}. 

The rest of the game involves showing $i e^{i2\theta}\cdot\dv\Sigma(m)=0$ using a similar contradiction argument as in \cite{llp}. Specifically, assuming $i e^{i2\theta}\cdot\dv\Sigma(m)\ne 0$ at a generic point $x$, then estimates established for the coefficients $A^{\Phi}_j$ in Lemma \ref{fmlem} plugged into \eqref{ep1121} would force $A^{\Phi}_2$ to satisfy a bound of the form $\abs{A_2^{\Phi}(m(x))}\lesssim\norm{\Phi_{\lfloor\mathbb S^1}}_{C^2(\mathbb S^1)}$ for all entropies $\Phi$. This estimate cannot hold due to the fact that the Hilbert transform (or conjugate function operator) on $\mathbb S^1$ is not bounded from $C^0(\mathbb S^1)$ to $L^\infty(\mathbb S^1)$. This contradiction shows $i e^{i2\theta}\cdot\dv\Sigma(m)=0$, and the formula \eqref{ep1121} reduces to more general version of \eqref{cpeq2} (see Proposition \ref{L17} for the details). Finally, \eqref{cpeq2} is obtained by recognizing the class of entropies $\{\Phi_f\}$ as harmonic entropies (first introduced in \cite{LP}) $\Phi\in ENT$, and this is the content of Subsection \ref{sec:harmext}.

\section{Preliminaries}

In this section we put together some notations and results that will be used repeatedly in later sections. Throughout this paper, we use the notation $A\lesssim B$ to indicate $A\leq c B$ for some constant $c$ independent of the underlying domain or functions. Let $\rho\in C^{\infty}_c(\R^2)$ be the standard convolution kernel with $\int_{\R^2}\rho=1$, and let $\rho_{\ep}(z)=\ep^{-2}\rho\lt(\frac{z}{\ep}\rt)$. Given a function $f$ we denote $f_{\ep}:=f*\rho_{\ep}$.  

Given a bounded Lipschitz domain $\Omega\subset\R^n$ and $f:\Omega\rightarrow \R$, $z\in\R^n$, we define 
\begin{equation*}
	D^zf(x):=\begin{cases}
		f(x+z)-f(x) &\text{ if } x,x+z\in\Omega;\\
		0 &\text{ otherwise}.
	\end{cases}
\end{equation*}
For any $s\in (0,1)$, $p,q\in [1,\infty]$, we set
\begin{equation*}
	\abs{f}_{B^s_{p,q}(\Omega)}=\left\Vert t^{-s}\sup_{\abs{h}\leq t}\norm{D^h f}_{L^p(\Omega)} \right\Vert_{L^q(\frac{dt}{t})},
\end{equation*}
and  the Besov space $B^s_{p,q}(\Omega)$ is the space of functions $f\colon\Omega\to\R$ such that
\begin{equation*}
	\norm{f}_{L^p(\Omega)} + \abs{f}_{B^{s}_{p,q}(\Omega)} <\infty.
\end{equation*}


%
%

\begin{lem}
\label{lemp6}
Given $m\in B^{\frac{1}{3}}_{3p,\infty,\loc}(\Omega)$ for some $1<p<\infty$, let $\PPI_m^{\ep}$ be given in \eqref{eqssc1}. Then for any $U\subset\subset\Omega$ and $\ep>0$ sufficiently small, we have
\begin{equation*}
\norm{\PPI_m^{\ep}}_{L^{p}(U)}\leq  |m|_{B^{\frac{1}{3}}_{3p,\infty}(U)}^{3}.
\end{equation*}
In particular $\{\PPI_m^\ep\}$ forms a bounded sequence in $L^{p}_{\loc}(\Omega)$.
\end{lem}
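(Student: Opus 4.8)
The plan is to estimate $\norm{\PPI_m^\ep}_{L^p(U)}$ directly from the definition \eqref{eqssc1} by passing the $L^p$ norm inside the integral over $B_\ep(0)$ via Minkowski's integral inequality, and then recognizing the resulting quantity as being controlled by the Besov seminorm.

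First I would write, for $U\subset\subset\Omega$ and $\ep>0$ small enough that $U+B_\ep(0)\subset\subset\Omega$,
\begin{equation*}
\norm{\PPI_m^\ep}_{L^p(U)}=\ep^{-3}\norm{\int_{B_\ep(0)}\abs{D^zm(\cdot)}^3\,dz}_{L^p(U)}\leq \ep^{-3}\int_{B_\ep(0)}\norm{\abs{D^zm}^3}_{L^p(U)}\,dz
\end{equation*}
by Minkowski's integral inequality. Now $\norm{\abs{D^zm}^3}_{L^p(U)}=\norm{D^zm}_{L^{3p}(U)}^3$, and for $\abs{z}\leq\ep$ we have $\norm{D^zm}_{L^{3p}(U)}\leq\sup_{\abs{h}\leq\ep}\norm{D^hm}_{L^{3p}(U)}$. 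Hence the right-hand side is bounded by
\begin{equation*}
\ep^{-3}\abs{B_\ep(0)}\lt(\sup_{\abs{h}\leq\ep}\norm{D^hm}_{L^{3p}(U)}\rt)^3=\pi\lt(\ep^{-\frac13}\sup_{\abs{h}\leq\ep}\norm{D^hm}_{L^{3p}(U)}\rt)^3.
\end{equation*}
The quantity in parentheses is precisely the expression whose $L^\infty(\tfrac{dt}{t})$ norm (in the variable $t=\ep$) defines $\abs{m}_{B^{1/3}_{3p,\infty}(U)}$, so $\ep^{-\frac13}\sup_{\abs{h}\leq\ep}\norm{D^hm}_{L^{3p}(U)}\leq\abs{m}_{B^{1/3}_{3p,\infty}(U)}$ for every $\ep>0$. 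This yields $\norm{\PPI_m^\ep}_{L^p(U)}\leq\pi\,\abs{m}_{B^{1/3}_{3p,\infty}(U)}^3$, which (absorbing $\pi$ into $\lesssim$, or by tracking the stated constant $1$ from a normalization of the Besov seminorm) gives the claim; the final sentence about boundedness in $L^p_\loc$ is then immediate since the right-hand side does not depend on $\ep$.

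I do not anticipate a serious obstacle here — the only points requiring a little care are: (i) checking that $D^zm(x)$ as defined (with the zero extension outside $\Omega$) is compatible with restricting to $U$, which is why one wants $\ep$ small enough that the relevant translates of $U$ stay inside $\Omega$ so that no spurious zeros enter and $D^zm(x)=m(x+z)-m(x)$ genuinely holds on $U$; (ii) the exact constant, which depends on the normalization convention for $\abs{\cdot}_{B^s_{p,q}}$ used in the Preliminaries — with the convention displayed there the bound comes out with constant $\pi<4$, but the statement is recorded with the harmless constant $1$, so one should either note the constant is absorbed or adjust the normalization; this is purely cosmetic. The main structural step is the application of Minkowski's integral inequality to move the $L^p$ norm inside, and everything after that is bookkeeping identifying the translation average with the Besov seminorm.
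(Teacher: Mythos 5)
Your proof is correct and is essentially the argument the paper intends: the paper defers to \cite[Lemma 8]{llp}, whose proof is exactly this Minkowski/Jensen-plus-Fubini computation that moves the $L^p$ norm inside the $z$-average and identifies the result with the Besov seminorm (your two caveats about the constant and the normalization are the right ones to make). The one point worth tightening is that the differences $D^zm$ appearing in $\PPI_m^{\ep}$ are taken relative to $\Omega$, whereas those in $|m|_{B^{1/3}_{3p,\infty}(U)}$ are by definition set to zero when $x+z\notin U$, so for $x\in U$ with $x+z\in\Omega\setminus U$ the left-hand side picks up contributions the right-hand side does not; the clean fix is to bound instead by the seminorm over a slightly larger $V$ with $U+B_{\ep}(0)\subset V\subset\subset\Omega$, which is harmless for the stated $L^p_{\loc}$ boundedness conclusion.
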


Lemma \ref{lemp6} is a trivial generalization of \cite[Lemma 8]{llp}. Its proof follows exactly the same lines by replacing $4/3$ with $p$ in the proof of the latter, and is thus skipped. The following lemma is a slight extension of \cite[Proposition 10]{llp}.

%
%
%
%

\begin{lem}\label{p:controlent} 
Let $m:\Omega\to\R^2$ satisfy \eqref{ageq3}. Assume \eqref{cpeq1.23} holds true for some sequence $\ep_k\rightarrow 0$ and $1\leq p<\infty$.   Then for any $\Phi\in  ENT$ (given in \eqref{cpa1.9}), we have $\dv\Phi(m)\in L^p_{\loc}(\Omega)$ and
\begin{equation}
\label{eq621.2}
\lt|\dv \Phi(m)(x)\rt|\lesssim  \|\Phi\|_{C^2\lt(\mathbb{S}^1\rt)}  \PPI_m(x) \qquad\text{ for a.e. }x\in \Omega.
\end{equation}
\end{lem}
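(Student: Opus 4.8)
The plan is to obtain the pointwise bound \eqref{eq621.2} by running the commutator/regularization scheme outlined in Subsection \ref{sec:keyideas} on the regularized vector field $m_\ep = m * \rho_\ep$, and then passing to the limit using the weak convergence hypothesis \eqref{cpeq1.23}. Since $\Phi \in ENT$ is only $C^2$ on $\mathbb{S}^1$, I would first extend $\Phi$ to $\overline B_1$ — the radial extension \eqref{eqprin11} being the convenient choice here — so that $\Phi(m_\ep)$ makes sense even though $|m_\ep|$ may be slightly less than $1$; the extension satisfies $\|\Phi\|_{C^2(\overline B_1)} \lesssim \|\Phi\|_{C^2(\mathbb{S}^1)}$. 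Applying the chain rule to the smooth function $m_\ep$ and using $\dv m = 0$ together with the structural identity \eqref{cpa1.7} for entropies, one computes that the leading-order contribution to $\dv \Phi(m_\ep)$ is a term of the form (schematically) $c(m_\ep)\,|\nabla m_\ep|\,(1 - |m_\ep|^2)$, all other terms involving an extra factor that vanishes as $\ep \to 0$; this is the computation referenced as \eqref{eqllkk1}. The key pointwise estimate is then the commutator bound $|\nabla m_\ep(x)|\,(1 - |m_\ep(x)|^2) \lesssim \PPI_m^{\ep}(x)$, which follows from writing $1 - |m_\ep|^2 = (1-|m|^2)*\rho_\ep - |m_\ep|^2 + (|m|^2)*\rho_\ep$, using $|m| = 1$ a.e. to kill the averaged term, and recognizing the remainder as $\int (m_\ep(x) - m(x+z))^2 \rho_\ep(z)\,dz$ up to sign, which together with the similar representation of $\nabla m_\ep$ and Hölder in the variable $z$ over $B_\ep(0)$ produces exactly $\ep^{-3}\int_{B_\ep} |D^z m(x)|^3\,dz = \PPI_m^\ep(x)$.

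Collecting the above, I would arrive at a pointwise bound $|\dv \Phi(m_\ep)(x)| \lesssim \|\Phi\|_{C^2(\mathbb{S}^1)}\,\PPI_m^{\ep}(x) + R_\ep(x)$ on any $U \subset\subset \Omega$, where $R_\ep \to 0$ in $L^1_{\loc}$ (or in the appropriate weak sense). By Lemma \ref{lemp6}-type reasoning the right-hand side is bounded in $L^p_{\loc}$, so after testing against an arbitrary nonnegative $\zeta \in C^0_c(\Omega)$ and using that $\dv \Phi(m_\ep) \to \dv \Phi(m)$ in $\mathcal{D}'(\Omega)$ while $\PPI_m^{\ep_k} \rightharpoonup \PPI_m$ in $L^p_{\loc}$, one passes to the limit along the subsequence $\ep_k$ to get $\langle \dv \Phi(m), \zeta\rangle \lesssim \|\Phi\|_{C^2(\mathbb{S}^1)}\int \PPI_m\,\zeta\,dx$ for all such $\zeta$ (and likewise with $-\Phi$, or after replacing $\zeta$ by $\pm\zeta$), which forces $\dv \Phi(m)$ to be an absolutely continuous measure with density bounded a.e. by $c\,\|\Phi\|_{C^2(\mathbb{S}^1)}\,\PPI_m(x)$. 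This is precisely \eqref{eq621.2}, and $\dv\Phi(m) \in L^p_{\loc}(\Omega)$ follows since $\PPI_m \in L^p_{\loc}(\Omega)$ by assumption.

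I expect the main obstacle to be the careful bookkeeping in identifying the leading term of $\dv \Phi(m_\ep)$ and controlling the error terms uniformly: one must verify that every term other than the one proportional to $|\nabla m_\ep|(1-|m_\ep|^2)$ carries either an extra factor of $(1 - |m_\ep|^2)$ or a commutator-type difference that is $o(1)$ in $L^1_{\loc}$, and that the $C^2$-regularity of $\Phi$ on $\mathbb{S}^1$ (rather than on all of $\R^2$) is genuinely enough — this is where the choice of radial extension and the identity \eqref{cpa1.7} must be used with some care. The case $p = 1$ requires slightly more attention since the weak convergence in \eqref{cpeq1.23} is only in $L^1_{\loc}$, so the error terms must be shown to vanish strongly in $L^1_{\loc}$ rather than merely weakly; but since \cite[Proposition 10]{llp} already handles the $p = 4/3$ case by essentially this argument, the extension to general $1 \le p < \infty$ amounts to replacing $4/3$ by $p$ throughout and checking that no exponent-specific structure was used — exactly the sense in which this lemma is "a slight extension" of the cited result.
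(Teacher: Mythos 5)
Your proposal is correct and follows essentially the same route as the paper: radial extension of $\Phi$ to $\overline B_1$, the identity $\dv\wt\Phi(m_\ep)=\Psi(m_\ep)\cdot\na(1-|m_\ep|^2)$ split into a divergence term vanishing by dominated convergence and a term controlled pointwise by the commutator bound $|\na m_\ep||1-|m_\ep|^2|\lesssim\PPI_m^\ep$, followed by duality and weak convergence of $\PPI_m^{\ep_k}$ to obtain the estimate against test functions and hence the pointwise bound. Your closing remarks on the $p=1$ case and on the reduction to \cite[Proposition 10]{llp} also match the paper's treatment.
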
 
\begin{proof}
Since the proof is very similar to the proof of \cite[Proposition 10]{llp}, we only sketch it somewhat briefly. Given $\Phi\in ENT$, we introduce the extension $\wt{\Phi}:\R^2\to\R^2$ of $\Phi$ given by
\begin{equation}
	\label{eqprin11}
	\wt{\Phi}(z):=\eta\lt(\lt|z\rt|\rt) \Phi\lt(\frac{z}{\lt|z\rt|}\rt),
\end{equation}
where $\eta:[0,\infty)\to\R$ is a smooth cut-off function with $\eta=0$ in $[0,\frac{1}{2}]\cup[2,\infty)$ and $\eta(1)=1$. For a regularization $m_{\ep}$ of $m$, as in the proof of \cite[Proposition 10]{llp} (originally from Steps 2 and 3 of the proof of \cite[Proposition~3]{DeI}) we have
\begin{align}
\label{eqllkk1}
\dv \wt{\Phi}\lt(m_{\ep}\rt)&=\Psi\lt(m_{\ep}\rt)\cdot \na \lt(1-\lt| m_{\ep}\rt|^2\rt)\nn\\
&=\na\cdot\lt(\Psi(m_{\e})(1-|m_{\e}|^2)\rt)-\na\cdot\Psi(m_{\e})\lt(1-|m_{\e}|^2\rt),
\end{align} 
where $\Psi(z)=\frac{-D\wt{\Phi}(z) z+\gamma(z)z}{2 \lt|z\rt|^2}$ and $\gamma(z)=\frac{z^{\perp}\cdot D\wt{\Phi}(z)z^{\perp}}{|z|^2}$. 

Given any open set $U\subset\subset\Omega$, for any $\zeta\in C_c^{\infty}(U)$, an integration by parts using \eqref{eqllkk1} and the estimate $\|D\Psi\|_{L^{\infty}(\mathbb{R}^2)}\lesssim \|\Phi\|_{C^2\lt(\mathbb{S}^1\rt)}$ (see equation (35) in \cite{llp}) yield (with $\ep$ replaced by $\ep_k$)
\begin{align}
\label{eqvbnm1}
\lt|\int_{U} \wt{\Phi}(m_{\ep_k})\cdot \na \zeta dx\rt|&\lesssim
\|\Phi\|_{C^2\lt(\mathbb{S}^1\rt)}\int_{U} \lt|\na m_{\ep_k}\rt|\lt| 1-\lt|m_{\ep_k}\rt|^2\rt| \lt|\zeta\rt|dx\nn\\
&\qd\,+\lt|\int _{U}\lt(1-\lt| m_{\ep_k}\rt|^2\rt)\Psi\lt(m_{\ep_k}\rt)\cdot \na \zeta \, dx\rt|.
\end{align}
It is not hard to see that the left-hand side of \eqref{eqvbnm1} converges to $\lt|\int_{U} \Phi(m)\cdot \na \zeta dx\rt|$, and, upon extraction of a subsequence (not relabeled),  the second term on the right-hand side tends to zero as $k\to\infty$ by the Dominated Convergence Theorem. 
And by \cite[Lemma 9]{llp} we have that $ \lt|\na m_{\ep}\rt|\lt| 1-\lt|m_{\ep}\rt|^2\rt|\lesssim \PPI_m^{\ep}$. Passing to the limit as $k\to\infty$ in \eqref{eqvbnm1} and using \eqref{cpeq1.23}, we obtain
\begin{equation}
\label{eqokp1}
\lt|\int_U \Phi\lt(m\rt)\cdot \na\zeta\, dx \rt|\lesssim \|\Phi\|_{C^2\lt(\mathbb{S}^1\rt)}\int_U \PPI_m \lt|\zeta\rt| dx\leq\|\Phi\|_{C^2\lt(\mathbb{S}^1\rt)}\|\PPI_m\|_{L^p(U)}\|\zeta\|_{L^{p'}(U)}
\end{equation}
for all $\zeta\in C^{\infty}_c(U)$. 

When $1<p<\infty$, it follows from \eqref{eqokp1} and standard arguments that $\dv\Phi(m)\in L^p_{\loc}(\Omega)$ and the pointwise estimate \eqref{eq621.2} holds for all Lebesgue points of $\dv\Phi(m)$ and $\PPI_m$; see the end of the proof of \cite[Proposition~6]{llp} for the details. When $p=1$, one can extend the estimate \eqref{eqokp1} to all $\zeta\in C^0_c(U)$. It follows that $\dv\Phi(m)\in \mathcal{M}_{\loc}\lt(\Omega\rt)$ and, using \cite[Proposition 1.47]{ambrosio}, we infer
\begin{equation}
	\label{eq:controlENT}
	\|\dv\Phi(m)\|_{\mathcal{M}(U)}=\sup_{\zeta\in C^0_c(U),\|\zeta\|_{C^0}\leq 1}\lt|\langle \dv\Phi(m),\zeta\rangle\rt|\lesssim \|\Phi\|_{C^2\lt(\mathbb{S}^1\rt)}\|\PPI_m\|_{L^1(U)}
\end{equation}
for all open $U\subset\subset\Omega$. It is then standard to deduce from \eqref{eq:controlENT} that $\|\dv\Phi(m)\|$ is absolutely continuous with respect to the Lebesgue measure with $L^1_{\loc}$ density, and the same holds for $\dv\Phi(m)$. Finally,  for all Lebesgue points $x$ of $|\dv\Phi(m)|$ and $\PPI_m$, we deduce from \eqref{eq:controlENT} that
\begin{align*}
	|\dv\Phi(m)(x)|&=\lim_{r\rightarrow 0}\Xint{-}_{B_r(x)} |\dv\Phi(m)| \, dz\\
	&\lesssim \|\Phi\|_{C^2\lt(\mathbb{S}^1\rt)}\lim_{r\rightarrow 0}\Xint{-}_{B_r(x)} \PPI_m\, dz=\|\Phi\|_{C^2\lt(\mathbb{S}^1\rt)}\PPI_m(x),
\end{align*}
and this establishes \eqref{eq621.2} for $p=1$.
\end{proof}

\begin{rem}
	\label{R13}
	Note that Lemma \ref{p:controlent} in particular gives 
	\begin{equation*}
	\dv\Sigma_j(m)\in L^p_{\loc}(\Omega)\qd\text{ for }j=1,2, 
	\end{equation*}
	where $\Sigma_j$ are given in \eqref{ep101}, \eqref{ep102}, since $\Sigma_j$ agrees (up to a factor) with $\Phi_{f}$ on $\mathbb{S}^1$ for $f=\cos(2t)$ or $f=\sin(2t)$; see \cite[Remark 3.2]{GL}.
\end{rem}
%
%

\section{Proof of Theorem \ref{C2}}
\label{profthm3A}

In this section we prove Theorem \ref{C2} by establishing the loop of implications $(A)\Longrightarrow (B)$, $(B)\Longrightarrow (C)$ and $(C)\Longrightarrow (A)$ in the following three subsections. This structure of proof is parallel to that of the main theorem in \cite{GL}. The most involved step is the last step $(C)\Longrightarrow (A)$, which requires significant refinements of the estimates in the counterpart  of the proof of \cite[Theorem 2.6]{GL}. Note that the proofs of $(A)\Longrightarrow (B)$ and $(B)\Longrightarrow (C)$ work for all $1<p<\infty$, and the assumption $p\leq \frac 43$ in Theorem \ref{C2} is only needed in the proof of $(C)\Longrightarrow (A)$.

\subsection{Proof of $(A)\Longrightarrow (B)$ in Theorem \ref{C2}}

\begin{prop}
	\label{LL16}
	Let $m:\Omega\to\R^2$ satisfy \eqref{ageq3}. Assume $m\in B^{\frac{1}{3}}_{3p,\infty,\loc}(\Omega)$ for some $1<p<\infty$, then $\dv\Phi(m)\in L^p_{\loc}(\Omega)$ for all $\Phi \in ENT$. 
\end{prop}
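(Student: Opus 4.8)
The plan is to prove $(A)\Longrightarrow(B)$ by combining Lemma~\ref{lemp6} with Lemma~\ref{p:controlent}. The argument is essentially a two-line deduction once those two preliminary results are in hand: Lemma~\ref{lemp6} says that if $m\in B^{\frac13}_{3p,\infty,\loc}(\Omega)$ for $1<p<\infty$, then the sequence $\{\PPI_m^{\ep}\}$ is bounded in $L^p_{\loc}(\Omega)$; and Lemma~\ref{p:controlent} converts a weak-$L^p_{\loc}$ limit of $\PPI_m^{\ep}$ into the conclusion $\dv\Phi(m)\in L^p_{\loc}(\Omega)$ together with the pointwise bound \eqref{eq621.2}. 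So the only real work is to produce the weak limit, i.e.\ to pass from boundedness of $\{\PPI_m^\ep\}$ to the hypothesis \eqref{cpeq1.23} of Lemma~\ref{p:controlent}.

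First I would fix an open set $U\subset\subset\Omega$. By Lemma~\ref{lemp6}, for all sufficiently small $\ep>0$ we have $\norm{\PPI_m^{\ep}}_{L^p(U)}\leq |m|_{B^{\frac13}_{3p,\infty}(U')}^3$ for a slightly larger $U\subset\subset U'\subset\subset\Omega$, so $\{\PPI_m^\ep\}$ is bounded in $L^p(U)$. Since $1<p<\infty$, $L^p(U)$ is reflexive, hence along some sequence $\ep_k\to 0$ we can extract a weak limit $\PPI_m^{\ep_k}\rightharpoonup \PPI_m$ in $L^p(U)$. Exhausting $\Omega$ by countably many such $U$ and using a diagonal argument gives a single sequence $\ep_k\to 0$ and a function $\PPI_m\in L^p_{\loc}(\Omega)$ with $\PPI_m^{\ep_k}\rightharpoonup \PPI_m$ in $L^p_{\loc}(\Omega)$; this is precisely \eqref{cpeq1.23}.

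With \eqref{cpeq1.23} established, Lemma~\ref{p:controlent} applies verbatim: for every $\Phi\in ENT$ we get $\dv\Phi(m)\in L^p_{\loc}(\Omega)$, which is exactly statement~$(B)$ of Theorem~\ref{C2}. (As a byproduct one also records the pointwise estimate $|\dv\Phi(m)(x)|\lesssim\|\Phi\|_{C^2(\mathbb S^1)}\,\PPI_m(x)$ a.e., which will be useful later.) Note that nothing in this step uses the restriction $p\leq \frac43$, consistent with the remark in the text that $(A)\Longrightarrow(B)$ works for all $1<p<\infty$.

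The main (and essentially only) obstacle is a bookkeeping one rather than a conceptual one: making sure the diagonal extraction is done correctly so that a \emph{single} sequence $\ep_k\to 0$ works simultaneously on all of $\Omega$, and checking that the weak limit obtained on overlapping subdomains is consistent so that it glues to a well-defined $\PPI_m\in L^p_{\loc}(\Omega)$. This is routine — one exhausts $\Omega$ by $U_1\subset\subset U_2\subset\subset\cdots$ with $\bigcup_j U_j=\Omega$, extracts weak limits successively refining the subsequence at each stage, and notes uniqueness of weak $L^p$ limits forces agreement of the pieces on overlaps. All the genuine analytic content is already packaged in Lemmas~\ref{lemp6} and \ref{p:controlent}, both of which are quoted from (generalizations of) \cite{llp}, so the proof of Proposition~\ref{LL16} is short.
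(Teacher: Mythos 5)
Your proposal is correct and follows exactly the paper's own proof: Lemma~\ref{lemp6} gives boundedness of $\{\PPI_m^\ep\}$ in $L^p_{\loc}$, reflexivity of $L^p$ for $1<p<\infty$ yields the weak limit \eqref{cpeq1.23} along a subsequence, and Lemma~\ref{p:controlent} then gives the conclusion. The diagonal-extraction bookkeeping you spell out is simply left implicit in the paper.
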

\begin{proof} From Lemma \ref{lemp6} we know $\{\PPI^{\e}_m\}$ forms a bounded sequence in $L^p_{\loc}(\Omega)$. As $1<p<\infty$, upon extraction of a subsequence, we have $\PPI^{\e_k}_m\rightharpoonup \PPI_m$ in $L^p_{\loc}(\Omega)$ for some $\PPI_m$. This allows us to apply Lemma \ref{p:controlent} to deduce for all $\Phi\in  ENT$ that $\dv\Phi(m)\in L^p_{\loc}(\Omega)$.
\end{proof}


\subsection{Proof of $(B)\Longrightarrow (C)$ in Theorem \ref{C2}}

\begin{prop}[Lamy]
	\label{BimplyC}
	Let $m:\Omega\to\R^2$ satisfy \eqref{ageq3}. Assume $\dv\Phi(m)\in L^p_{\loc}(\Omega)$ for some $1<p<\infty$ and all $\Phi \in ENT$, then $m$ satisfies the $L^p$ kinetic equation in the sense of Definition \ref{def:KIN}. 
\end{prop}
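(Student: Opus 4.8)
The plan is to build on the kinetic measure produced in \cite{GL}. Since $L^p_{\loc}\subset\mathcal M_{\loc}$, the hypothesis $\dv\Phi(m)\in L^p_{\loc}(\Omega)$ for all $\Phi\in ENT$ in particular gives $\dv\Phi_f(m)\in\mathcal M_{\loc}(\Omega)$ for all $f\in C^0(\mathbb R/2\pi\mathbb Z)$, which is exactly the assumption under which \cite{GL} produces a Radon measure $\sigma\in\mathcal M_{\loc}(\Omega\times\mathbb R/2\pi\mathbb Z)$ solving \eqref{eqinta3}; I would fix the normalisation of $\sigma$ so that $\int_{\mathbb R/2\pi\mathbb Z}\sigma(x,s)\,ds=0$ (this uses only the freedom of adding an $s$-independent measure on $\Omega$, and keeps $\sigma$ a measure). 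It then remains to produce the disintegration $\sigma=\mathcal L^2\otimes\sigma_x$ and to verify the integrability $\nu(x)=\|\sigma_x\|_{\mathcal M(\mathbb R/2\pi\mathbb Z)}\in L^p_{\loc}(\Omega)$ required by Definition \ref{def:KIN}.

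The main new ingredient is a closed graph argument. Fix $U\subset\subset\Omega$. The linear map $\Phi\mapsto\dv\Phi(m)$ is everywhere defined from $C^2(\mathbb S^1)$ (containing $ENT$) into $L^p(U)$ by hypothesis, and it is closed: if $\Phi_n\to\Phi$ in $C^2(\mathbb S^1)$ then $\Phi_n(m)\to\Phi(m)$ uniformly (as $|m|=1$), hence $\dv\Phi_n(m)\to\dv\Phi(m)$ in $\mathcal D'(\Omega)$, so any $L^p(U)$-limit of $\dv\Phi_n(m)$ equals $\dv\Phi(m)$. Therefore there is $C_U$ with $\|\dv\Phi(m)\|_{L^p(U)}\leq C_U\|\Phi\|_{C^2(\mathbb S^1)}$ for all $\Phi\in ENT$. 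I would apply this to the entropies $\Phi_F$, $\Phi_F(z):=\int_{\{e^{is}\cdot z>0\}}F(s)e^{is}\,ds$ (a reparametrisation of the Ghiraldin--Lamy family \eqref{eqplm1}): a direct computation of $\tfrac{d^2}{dt^2}\Phi_F(e^{it})$ shows that only $F$ and $F'$ enter, so, choosing $F$ with zero mean, $\|\Phi_F\|_{C^2(\mathbb S^1)}\lesssim\|F'\|_{C^0}$, giving
\begin{equation*}
\lt\|\dv\Phi_F(m)\rt\|_{L^p(U)}\leq C_U\,\lt\|F'\rt\|_{C^0(\mathbb R/2\pi\mathbb Z)}.
\end{equation*}

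Next, pairing \eqref{eqinta3} against a product test function $F(s)\zeta(x)$, using $e^{is}\cdot\na_x\mathds 1_{e^{is}\cdot m(x)>0}=\na_x\cdot\big(e^{is}\mathds 1_{e^{is}\cdot m(x)>0}\big)$ and integrating by parts in $x$, one obtains $\la\sigma,\,F'(s)\zeta(x)\ra=-\la\dv\Phi_F(m),\zeta\ra$ for all $\zeta\in C^\infty_c(U)$; since $\sigma$ is $s$-mean-zero, writing $f=F'$ (with $F$ the zero-mean primitive of the mean-zero part of $f$) this reads $\la\sigma,\,f(s)\zeta(x)\ra=-\la\dv\Phi_F(m),\zeta\ra$ for every $f\in C^0(\mathbb R/2\pi\mathbb Z)$. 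Combined with the previous display, $\lt|\la\sigma,f(s)\zeta(x)\ra\rt|\leq C_U\|f\|_{C^0}\|\zeta\|_{L^{p'}(U)}$. Disintegrating $\sigma$ over its $x$-marginal $\lambda:=(\pi_x)_{\#}|\sigma|$, so that $\sigma=\lambda\otimes\beta_x$ with $\|\beta_x\|=1$ for $\lambda$-a.e.\ $x$, this estimate applied with a countable dense family of $f$ forces the $\mathcal L^2$-singular part of $\lambda$ to annihilate every $\la\beta_\cdot,f\ra$, hence $\beta_x=0$ there — impossible unless that singular part vanishes. Thus $\lambda\ll\mathcal L^2$, $\sigma=\mathcal L^2\otimes\sigma_x$ with $\|\sigma_x\|\in L^1_{\loc}(\Omega)$, and $\la\sigma_x,f\ra=-\dv\Phi_F(m)(x)$ a.e., for each fixed $f$.

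The hard part, which I expect to be the main obstacle, is the integrability $\nu(x)=\|\sigma_x\|=\sup_{\|f\|_{C^0}\leq1}|\la\sigma_x,f\ra|\in L^p_{\loc}(\Omega)$: the bound above controls each $\|\dv\Phi_F(m)\|_{L^p(U)}$ uniformly, but a supremum of an $L^p$-bounded family need not be $L^p$, so one must use the specific structure of $\sigma$. The plan is to invoke the separability of $C^0(\mathbb R/2\pi\mathbb Z)$ and the identification of $L^{p'}\big(U;C^0(\mathbb R/2\pi\mathbb Z)\big)^*$ with the space of weak-$*$ measurable maps $U\to\mathcal M(\mathbb R/2\pi\mathbb Z)$ whose total variation lies in $L^p(U)$: it then suffices to upgrade the product estimate to the non-product estimate $\lt|\la\sigma,\varphi\ra\rt|\leq C_U\big\|\,x\mapsto\|\varphi(x,\cdot)\|_{C^0(\mathbb R/2\pi\mathbb Z)}\,\big\|_{L^{p'}(U)}$ for all $\varphi\in C^\infty_c(U\times\mathbb R/2\pi\mathbb Z)$, which represents $\sigma$ exactly as $\mathcal L^2\otimes\sigma_x$ with $\|\nu\|_{L^p(U)}\leq C_U$ and finishes the proof. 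To prove the non-product estimate I would subtract the $s$-average of $\varphi$ (which pairs to zero with $\sigma$), reducing to bounding $\la e^{is}\cdot\na_x\mathds 1_{e^{is}\cdot m(x)>0},\widetilde\varphi\ra$ with $\widetilde\varphi$ an $s$-primitive of the resulting zero-mean function; then regularise $m$ to $m_\e$, use $e^{is}\cdot\na_x\mathds 1_{e^{is}\cdot m_\e(x)>0}=(\dv\widehat m_\e)\big(\delta_{s=\theta_\e+\pi/2}+\delta_{s=\theta_\e-\pi/2}\big)$ with $\widehat m_\e=m_\e/|m_\e|=e^{i\theta_\e}$, rewrite $\dv\widehat m_\e$ via $\dv m_\e=0$ as a divergence of $(|m_\e|^2-1)m_\e$, integrate by parts once more, and estimate the resulting terms (which carry a factor $|\na\theta_\e|\lesssim|\na m_\e|$) by the commutator bound $|\na m_\e|\,|1-|m_\e|^2|\lesssim\PPI_m^{\ep}$ of \cite[Lemma~9]{llp}. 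The genuinely delicate point is to close this chain uniformly in $\e$ against the $x$-dependent test function $\widetilde\varphi$: the natural estimate leaves the quantity $\int\PPI_m^{\ep}(x)\,\|\varphi(x,\cdot)\|_{C^0}\,dx$, so one must also secure a uniform-in-$\e$ bound $\|\PPI_m^{\ep}\|_{L^p(U)}\leq C_U$ — which itself has to be teased out of hypothesis (B) via the closed-graph mechanism above rather than assumed. Making this bookkeeping rigorous is where the real work lies.
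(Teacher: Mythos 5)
Your first two-thirds track the paper's argument in substance. The uniform estimate $\lt|\la \dv\Phi_f(m),\zeta\ra\rt|\leq C_U\norm{f}_{C^0}\norm{\zeta}_{L^{p'}(U)}$ is exactly the paper's starting point (obtained there by Banach--Steinhaus applied to the functionals $T_\zeta(f)=\la\dv\Phi_f(m),\zeta\ra$ rather than by your closed-graph argument on $\Phi\mapsto\dv\Phi(m)$; the two are interchangeable, provided you apply the closed graph theorem on the closed subspace $ENT\subset C^2(\mathbb S^1;\R^2)$, where the map is actually $L^p$-valued). Your disintegration of $\sigma$ over its $x$-marginal and the argument that the singular part of that marginal must vanish are sound and give $\sigma=\mathcal L^2\otimes\sigma_x$ with $\nu\in L^1_{\loc}$. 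You also correctly isolate the genuine difficulty: upgrading ``$\la\sigma_\cdot,f\ra\in L^p$ for each fixed $f$'' to ``$\nu=\norm{\sigma_\cdot}_{\mathcal M}\in L^p$''.

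The gap is in your resolution of that difficulty. The non-product estimate $\lt|\la\sigma,\varphi\ra\rt|\lesssim\bigl\lVert\,\norm{\varphi(x,\cdot)}_{C^0}\bigr\rVert_{L^{p'}(U)}$ cannot be reached by the regularization/commutator route you sketch, because that route inevitably terminates in a bound by $\int_U\PPI_m^{\ep}(x)\,\norm{\varphi(x,\cdot)}_{C^0}\,dx$ and therefore requires $\sup_{\ep}\norm{\PPI_m^{\ep}}_{L^p(U)}<\infty$. That bound does \emph{not} follow from hypothesis $(B)$ by the closed-graph mechanism: closed graph controls quantities linear in $\Phi$, whereas $\PPI_m^{\ep}$ is a cubic functional of the increments of $m$ with no entropy representation. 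In the paper the only route to $\norm{\PPI_m^{\ep}}_{L^p}\lesssim 1$ is via the Besov regularity $(A)$ and Lemma \ref{lemp6}, i.e.\ via $(C)\Longrightarrow(A)$ --- which presupposes the full $L^p$ kinetic equation (including $\nu\in L^p_{\loc}$, the very thing being proved) and is in any case only available for $p\leq\frac43$. So your plan is circular. The missing ingredient is purely functional-analytic: since $1<p<\infty$ makes $L^p(\Omega)$ reflexive, the bounded operator $T\colon C^0(\R/2\pi\mathbb Z)\to L^p(\Omega)$, $f\mapsto\dv\Phi_f(m)$, is weakly compact and hence representable by an $L^p$-valued Borel measure $G$ of bounded variation (\cite[Theorem VI.2.1]{diest}). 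Evaluating $G$ on disjoint Borel sets against simple $L^{p'}$-valued functions gives precisely your non-product estimate, $\lt|\la\sigma,\psi\ra\rt|\leq\norm{G}(\R/2\pi\mathbb Z)\norm{\psi}_{L^\infty(\R/2\pi\mathbb Z;L^{p'})}$, after which restriction to $L^{p'}(\Omega;C^0)\subset C^0(\R/2\pi\mathbb Z;L^{p'}(\Omega))$ and the duality $L^{p'}(\Omega;C^0)^*\approx L^p_w(\Omega;\mathcal M)$ yield $\nu\in L^p_{\loc}$ with no PDE input at all. This abstract representation step --- not a sharper commutator estimate --- is what closes the argument, and it is the one place where reflexivity of $L^p$ is used.
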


The proof of the above proposition relies crucially on the following Lemma \ref{lem:sigmaLp}, which is an $L^p$ version of \cite[Lemma 3.4]{GL}. Note that the proof of \cite[Lemma 3.4]{GL} is a bit unclear in that the estimates for tensor products established there are not sufficient to apply the Riesz representation theorem. Nevertheless as shown to us by Lamy, the generalized Riesz representation theorem stated in \cite{diest} overcomes these obstacles. In the following we include this complete proof adapted to our $L^p$ setting. In Appendix \ref{a:lem3.4}, we also include the complete proof of \cite[Lemma 3.4]{GL} since this result is needed in our Theorem \ref{C1} and the complete proof has not appeared elsewhere. We warmly thank Lamy for these proofs presented.

To begin with, we recall a class of entropies $\{\Phi_f\}\subset ENT$ constructed in \cite{GL}. Given  $f\in C^0(\mathbb R/ 2\pi \mathbb{Z})$, identifying  $\mathbb{S}^1\cong\mathbb R/2\pi\mathbb Z$ and $\mathbb R^2\cong\mathbb C$, we define  
\begin{equation}
	\label{eqplm1}
	\Phi_f(e^{it}):=-i\varphi_f(t-\pi/2)+i\varphi_f(t+\pi/2),
\end{equation}
where 
\begin{equation}
	\label{eq:varphi_f}
	\varphi_f(t):=\int_0^t \psi_f(s)ie^{is}ds
\end{equation} 
and 
\begin{equation}
	\label{eqplm1.5}
	\psi_f(t):=\int_0^t \left[f(s)-\langle f,1\rangle -2 \langle f, \cos \rangle \cos(s) -2 \langle f,\sin\rangle \sin(s)\right]ds,
\end{equation} 
and $\la \cdot, \cdot \ra$  denotes the inner product on $L^2\lt(\mathbb{R}/ 2\pi \mathbb{Z}\rt)$ defined by $\la f,g\ra:=\frac{1}{2\pi}\int_0^{2\pi} f(t)g(t)\, dt$.

\begin{lem}[Lamy]
	\label{lem:sigmaLp}
	Let $m:\Omega\to\R^2$ satisfy \eqref{ageq3} and $1<p<\infty$. Assume $\dv\Phi_f(m)\in L^p(\Omega)$ for all $f\in C^0(\mathbb S^1)$, where $\Phi_f$ is defined in \eqref{eqplm1}. Then there exists $\sigma\in L^p_w(\Omega;\mathcal M(\mathbb{R}/ 2\pi\mathbb{Z}))$, where the subscript $w$ stands for weak measurability, such that
	\begin{equation}
		\label{eqgl1}
		\langle \dv\Phi_f(m),\zeta\rangle =\langle\sigma(x, s),f(s)\zeta(x)\rangle\qquad\forall f\in C^0(\R/2\pi\mathbb{Z}),\:\zeta\in L^{p'}(\Omega),
	\end{equation}
	where $\langle\cdot,\cdot\rangle$ denotes duality in the appropriate sense. Consequently, $\sigma$ satisfies \eqref{eq30.2} and \eqref{eq:sigmaLp}.
\end{lem}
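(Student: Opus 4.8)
The plan is to construct the vector-valued measure $\sigma$ as an element of the dual space $L^p_w(\Omega;\mathcal{M}(\mathbb{R}/2\pi\mathbb{Z}))$, which by the generalized Riesz representation theorem of \cite{diest} can be identified with the dual of $L^{p'}(\Omega;C^0(\mathbb{R}/2\pi\mathbb{Z}))$. First I would define, for a ``tensor'' test function $g(x,s)=\sum_{j=1}^N \zeta_j(x) f_j(s)$ with $\zeta_j\in L^{p'}(\Omega)$ and $f_j\in C^0(\mathbb{R}/2\pi\mathbb{Z})$, the candidate functional
\begin{equation*}
	T(g):=\sum_{j=1}^N \langle \dv\Phi_{f_j}(m),\zeta_j\rangle=\sum_{j=1}^N\int_\Omega \dv\Phi_{f_j}(m)\,\zeta_j\,dx.
\end{equation*}
The first key step is to verify that $T$ is well-defined, i.e.\ that it depends only on $g$ and not on the particular representation as a sum of tensors; this uses the linearity $f\mapsto\Phi_f$ (clear from \eqref{eqplm1}--\eqref{eqplm1.5}) together with the fact that $\dv\Phi_0(m)=0$ since $\Phi_0$ is constant (indeed $\psi_0\equiv 0$). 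The second and crucial step is the a priori bound $|T(g)|\lesssim \|g\|_{L^{p'}(\Omega;C^0)}$. This follows from Lemma \ref{p:controlent}, or rather its consequence combined with the chain $\|\dv\Phi_f(m)(x)\|\lesssim \|\Phi_f\|_{C^2(\mathbb{S}^1)}\PPI_m(x)$ and the elementary bound $\|\Phi_f\|_{C^2(\mathbb{S}^1)}\lesssim \|f\|_{C^0}$ (read off from \eqref{eqplm1}--\eqref{eqplm1.5}, since $\Phi_f$ involves two integrations of $f$ minus its first Fourier modes); thus $|\dv\Phi_{f_j}(m)(x)|\lesssim \|f_j\|_{C^0}\PPI_m(x)$ pointwise, and summing and applying Hölder with exponents $p,p'$ gives $|T(g)|\lesssim \|\PPI_m\|_{L^p(\Omega)}\,\sup_x\sum_j\|f_j\|_{C^0}|\zeta_j(x)|$, which we then need to compare with $\|g\|_{L^{p'}(\Omega;C^0)}=\||g(\cdot,s)|\|_{L^{p'}}$.

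The main obstacle is exactly this last comparison: the pointwise quantity $\sum_j \|f_j\|_{C^0}|\zeta_j(x)|$ is \emph{not} controlled by $\sup_s\sum_j|\zeta_j(x)f_j(s)|$ for a fixed representation, so one cannot directly bound $T$ on elementary tensors in the norm of $L^{p'}(\Omega;C^0)$. This is precisely the gap in \cite[Lemma 3.4]{GL} alluded to in the text. The remedy, following Lamy, is to go through the more refined duality $\big(L^{p'}(\Omega;C^0(\mathbb{R}/2\pi\mathbb{Z}))\big)^* \cong L^p_w(\Omega;\mathcal{M}(\mathbb{R}/2\pi\mathbb{Z}))$ from \cite{diest}: one first fixes $x$ (or works fiberwise after disintegrating) and uses that for a.e.\ $x$ the map $f\mapsto \dv\Phi_f(m)(x)$ is a bounded linear functional on $C^0(\mathbb{R}/2\pi\mathbb{Z})$ with norm $\lesssim \PPI_m(x)$, hence represented by a measure $\sigma_x\in\mathcal{M}(\mathbb{R}/2\pi\mathbb{Z})$ with $\|\sigma_x\|\lesssim \PPI_m(x)$. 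The work is then to show $x\mapsto\sigma_x$ is weakly measurable and that $x\mapsto\|\sigma_x\|$ is the pointwise limit of measurable functions (it is dominated by $\PPI_m\in L^p$, giving \eqref{eq:sigmaLp}), for which one tests against a countable dense family $\{f_k\}\subset C^0$, uses that $x\mapsto \dv\Phi_{f_k}(m)(x)$ is measurable for each $k$, and invokes separability of $C^0(\mathbb{R}/2\pi\mathbb{Z})$ to upgrade to weak measurability of the measure-valued map.

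Once $\sigma\in L^p_w(\Omega;\mathcal{M}(\mathbb{R}/2\pi\mathbb{Z}))$ is constructed with the fiber identity $\langle \sigma_x,f\rangle = \dv\Phi_f(m)(x)$ for a.e.\ $x$ and every $f\in C^0$, the identity \eqref{eqgl1} is obtained by multiplying by $\zeta\in L^{p'}(\Omega)$ and integrating, using Fubini (justified by the $L^p$–$L^{p'}$ integrability and $\|\sigma_x\|\le \PPI_m(x)$). Finally, \eqref{eq30.2} is the special case of \eqref{eqgl1} with $\zeta\in C^0_c(\Omega)$ and $f\in C^0$, after noting that $\sigma=\mathcal{L}^2\otimes\sigma_x$ in the notation of \cite[Definition 2.27]{ambrosio} is exactly the statement that the action of $\sigma$ on test functions disintegrates as the iterated integral; and \eqref{eq:sigmaLp} is the bound $\nu(x)=\|\sigma_x\|_{\mathcal{M}(\mathbb{R}/2\pi\mathbb{Z})}\lesssim \PPI_m(x)\in L^p_{\loc}(\Omega)$ established along the way, where $\PPI_m$ is a weak-$L^p$ limit of $\PPI_m^\ep$ available by Lemma \ref{lemp6} since hypothesis $(B)$ together with Theorem \ref{C2}'s earlier implications (or directly) furnishes the needed boundedness. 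I would also remark that, to connect with Proposition \ref{BimplyC}, one still needs $\sigma$ to satisfy the kinetic equation \eqref{eqinta3}; this is not part of Lemma \ref{lem:sigmaLp} itself but follows by combining \eqref{eqgl1} with the fact, from \cite{GL}, that the family $\{\Phi_f\}$ generates the kinetic relation, i.e.\ $e^{is}\cdot\nabla_x\mathds{1}_{e^{is}\cdot m(x)>0}$ paired against $\partial_s(f(s)\zeta(x))$ equals $-\langle\dv\Phi_f(m),\zeta\rangle$ up to sign conventions.
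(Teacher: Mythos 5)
Your proposal correctly diagnoses why the naive tensor estimate fails and correctly names the target duality $\lt(L^{p'}(\Omega;C^0(\R/2\pi\Z))\rt)^*\cong L^p_w(\Omega;\mathcal M(\R/2\pi\Z))$, but the construction you then run has a genuine gap. Your ``remedy'' is fiberwise: for a.e.\ $x$, represent $f\mapsto \dv\Phi_f(m)(x)$ by a measure $\sigma_x$ with $\|\sigma_x\|\lesssim\PPI_m(x)$. This needs the pointwise domination $|\dv\Phi_f(m)(x)|\lesssim\|f\|_{C^0}\,\PPI_m(x)$, i.e.\ Lemma \ref{p:controlent}, whose hypothesis is \eqref{cpeq1.23}. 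But \eqref{cpeq1.23} is not among the hypotheses of Lemma \ref{lem:sigmaLp}: the lemma assumes only $\dv\Phi_f(m)\in L^p(\Omega)$ for each $f\in C^0$. Worse, in the place where the lemma is used --- the implication $(B)\Longrightarrow(C)$ of Theorem \ref{C2} --- obtaining \eqref{cpeq1.23} from $(B)$ goes through Lemma \ref{lemp6} and hence through the Besov regularity $(A)$, which is only available after closing the loop $(B)\Rightarrow(C)\Rightarrow(A)$; your appeal to ``Theorem \ref{C2}'s earlier implications'' is therefore circular. Without a single dominating $L^p$ function the fiberwise argument breaks at the extension step: for a countable dense family $\{f_k\}$, the function $g(x):=\sup_k|\dv\Phi_{f_k}(m)(x)|/\|f_k\|_{C^0}$ need not be finite a.e., even though each term has $L^p$ norm bounded by a fixed constant, so $f\mapsto\dv\Phi_f(m)(x)$ need not extend to a bounded functional on $C^0$ for a.e.\ fixed $x$. (This is exactly why the paper's fiberwise version of the argument, Lemma \ref{slice} in Appendix \ref{a:sigmaLp}, carries the extra hypothesis $\mu_m\in L^p_{\loc}$, which supplies the dominating function.)

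The two ingredients you are missing are precisely what make Lamy's proof work. First, the Banach--Steinhaus theorem applied to the family $\{T_\zeta\}$, $T_\zeta(f)=\langle\dv\Phi_f(m),\zeta\rangle$ with $\zeta\in C^1_c(\Omega)$, $\|\zeta\|_{L^{p'}}\le1$, upgrades the qualitative hypothesis ``$\dv\Phi_f(m)\in L^p$ for each $f$'' to the uniform bound $\|\dv\Phi_f(m)\|_{L^p}\le C\|f\|_{C^0}$, so that $T:f\mapsto\dv\Phi_f(m)$ is a bounded operator from $C^0(\R/2\pi\Z)$ into $L^p(\Omega)$. Second, the vector-measure Riesz representation theorem \cite[Theorem VI.2.1]{diest} --- applicable because $L^p(\Omega)$ is reflexive --- represents $T$ by an $L^p(\Omega)$-valued measure $G$ on $\R/2\pi\Z$ of bounded variation; $\sigma$ is then defined on simple tensors $\sum_j\one_{E_j}(s)\zeta_j(x)$ with \emph{disjoint} $E_j$ by $\sum_j\langle G(E_j),\zeta_j\rangle$, and the disjointness is what yields the bound by $\norm{\psi}_{L^{\infty}(\R/2\pi\Z;L^{p'}(\Omega))}$, circumventing the tensor obstruction you identified. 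Only at the very end does the Edwards duality enter, to read off $\sigma_x$ and $\nu(x)=\|\sigma_x\|\in L^p$: the dominating function is an output of the argument, not an input.
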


\begin{proof}
	 Given Banach spaces $X$ and $Y$, let $\mathcal{L}(X;Y)$ denote the space of bounded linear operators from $X$ to $Y$. Given $\zeta\in C^1_{c}(\Omega)$, define the linear operator $T_{\zeta}:C^0(\mathbb{R}/ 2\pi\mathbb{Z})\to\R$ to be
	\begin{equation*}
		T_{\zeta}(f):=\langle \dv\Phi_f(m),\zeta \rangle.
	\end{equation*}	
	The estimate
	\begin{equation*}
		|T_{\zeta}(f)|\leq \lVert\Phi_f\rVert_{L^{\infty}} \lVert\na\zeta\rVert_{L^{\infty}}\leq C\lVert\na\zeta\rVert_{L^{\infty}}\lVert f\rVert_{L^{\infty}}
	\end{equation*}
	shows that $T_{\zeta}\in \mathcal L(C^0(\mathbb{R}/ 2\pi\mathbb{Z});\R)$ for all $\zeta\in C^1_{c}(\Omega)$. For the convenience of the proof of Lemma \ref{p:sigmaLp} in Appendix \ref{a:lem3.4}, let $\mathcal{Z}=L^{p'}(\Omega)$, so that $\mathcal{Z}^*=L^p(\Omega)$, and define
	\begin{align*}
		\mathbb{W}_{\mathcal{Z}}:=\left\lbrace T_{\zeta}: \zeta\in C^1_{c}(\Omega), \norm{\zeta}_{\mathcal{Z}}\leq 1\right\rbrace.
	\end{align*}
	By hypothesis, for each fixed 
	$f\in C^0(\mathbb{R}/ 2\pi\mathbb{Z} )$ we have
	\begin{equation*}
	\sup_{\zeta\in C_c^1(\Omega), \|\zeta\|_{\mathcal{Z}}\leq 1 } \langle \dv\Phi_f(m),\zeta \rangle<\infty.
	\end{equation*}
	So by Banach-Steinhaus' uniform boundedness principle applied to $\mathbb{W}_{\mathcal{Z}}$ we have 
	\begin{align}
		\label{BanachSteinhaus}
		\langle \dv\Phi_f(m),\zeta \rangle \leq C \norm{f}_{C^0(\mathbb{R}/ 2\pi \mathbb{Z})}\norm{\zeta}_{\mathcal{Z}}\qquad\forall f\in C^0(\mathbb{R}/ 2\pi\mathbb{Z} ),\:\zeta\in C^1_c(\Omega).
	\end{align}
	By extension, the above estimate \eqref{BanachSteinhaus} also holds for all $\zeta\in \mathcal{Z}$, and therefore the map
	\begin{align}
	\label{fineqa1}
		T\colon C^0(\mathbb{R}/ 2\pi\mathbb{Z} )\to \mathcal{Z}^*,\quad f\mapsto \dv\Phi_f(m)
	\end{align}
	is a bounded linear operator.
	
	Since $1<p<\infty$, the space $L^p(\Omega)$ is reflexive. Thanks to \cite[Theorem VI.2.1]{diest}, there exists an $\mathcal{Z}^*$-valued Borel measure $G$ on $\mathbb{R}/ 2\pi\mathbb{Z}$ such that
	\begin{equation}
		\label{eqokl2}
		Tf = \int_{\mathbb{R}/ 2\pi\mathbb{Z}}f\,dG.
	\end{equation}
	We use this representation to define $\langle \sigma,\psi\rangle$ for a convenient class of functions $\psi(x,s)$. First consider $\psi$ a finite linear combination of the form
	\begin{align*}
		\psi(x,s)=\sum_j \one_{E_j}(s)\zeta_j(x),\qquad E_j\subset \mathbb{R}/ 2\pi\mathbb{Z} \text{ Borelian, }\zeta_j\in \mathcal{Z},
	\end{align*}
	and set
	\begin{align}
		\label{eqokl1}
		\langle\sigma,\psi\rangle :=\sum_j \langle G(E_j),\zeta_j\rangle.
	\end{align}
	Assuming without loss of generality that the $E_j$'s are disjoint and non-negligible, for such $\psi$ we have
	\begin{align}
		\label{eqp2}
		\langle\sigma,\psi\rangle & \leq  \sum_j\norm{G}(E_j)\norm{\zeta_j}_{\mathcal{Z}}\nn\\
		& \leq \norm{G}(\mathbb{R}/ 2\pi\mathbb{Z} )\max_j \norm{\zeta_j}_{\mathcal{Z}} =\norm{T}_{\mathcal L(C^0(\mathbb{R}/ 2\pi\mathbb{Z} );\mathcal{Z}^*)}\norm{\psi}_{L^{\infty}(\mathbb{R}/ 2\pi\mathbb{Z};\mathcal{Z})}.
	\end{align}
	As a consequence the linear form $\sigma$ admits a unique linear continuous extension (still denoted $\sigma$) to the space of all limits of $\psi$'s of the above form in the $L^{\infty}(\mathbb{R}/ 2\pi\mathbb{Z};\mathcal{Z})$ norm. Hence $\sigma$ can in particular be considered as a continuous linear form on $C^0(\mathbb{R}/ 2\pi\mathbb{Z};\mathcal{Z})$. Moreover, since for $f=\one_{E}$ and $\zeta\in \mathcal{Z}$, the definition (\ref{eqokl1}) gives
	\begin{align*}
		\langle \sigma(x,s),f(s)\zeta(x)\rangle = \langle \int f \, dG, \zeta\rangle,
	\end{align*}
	this formula must also be valid for $f\in C^0(\mathbb{R}/ 2\pi\mathbb{Z} )$ and we deduce that
	\begin{align*}
		\langle\sigma(x,s),f(s)\zeta(x)\rangle &= \langle \int f \, dG, \zeta\rangle
		\overset{(\ref{eqokl2})}{=}\langle Tf,\zeta\rangle\overset{(\ref{fineqa1})}{=}\la \dv \Phi_f(m), \zeta\ra   \qquad\forall f\in C^0(\mathbb{R}/ 2\pi\mathbb{Z}),\:\zeta\in \mathcal{Z},
	\end{align*}
	and this establishes \eqref{eqgl1}.

Since $C^0(\mathbb{R}/ 2\pi\mathbb{Z};L^{p'}(\Omega))\supset L^{p'}(\Omega;C^0(\mathbb{R}/ 2\pi\mathbb{Z}))$, we can identify $\sigma$ with an element of 
	\begin{align*}
		L^{p'}(\Omega;C^0(\mathbb{R}/ 2\pi\mathbb{Z}))^*\approx L^p_w(\Omega;\mathcal M(\mathbb{R}/ 2\pi\mathbb{Z}))
	\end{align*}
	(for this duality result see e.g. \cite[Theorem~8.20.3]{edwards} -- recall the subscript $w$ stands for weak measurability). Thus for almost all $x\in\Omega$, $\sigma_x(\cdot):=\sigma(\cdot,x)\in \mathcal M(\mathbb{R}/ 2\pi\mathbb{Z})$, and
	\begin{align*}
		\left(\int_\Omega \norm{\sigma(\cdot,x)}_{\mathcal M(\mathbb{R}/ 2\pi\mathbb{Z})}^p\, dx\right)^{\frac 1p} \leq \norm{\sigma}_* <\infty.
	\end{align*}
	This completes the proof of \eqref{eq30.2} and \eqref{eq:sigmaLp}.
\end{proof}

\begin{proof}[Proof of Proposition \ref{BimplyC}]
	For any open set $U\subset\subset\Omega$, the fact that $m$ satisfies the kinetic equation \eqref{eqinta3} and $\sigma$ from  Lemma \ref{lem:sigmaLp} forms the kinetic measure for $m$ in $U$ follows in a reasonably straightforward way; see Subsection 3.1 in \cite{GL}. The additional structure of $\sigma$ in \eqref{eq30.2} and \eqref{eq:sigmaLp} is also given in Lemma \ref{lem:sigmaLp}.
\end{proof}

\begin{rem}
	Under the assumptions of Theorem \ref{C2}, define the ``grand measure'' $\mu_m$ for $m$ to be the least upper bound of all entropy measures (see \cite[Definition 1.68]{ambrosio}), i.e.
		\begin{equation}
			\label{eqinta2}
			\mu_m:=\bigvee_{\Phi\in ENT, \|\Phi \|_{C^2}\leq 1} \|\dv \Phi(m)\|.
		\end{equation} 
		Then calling 
		\begin{equation*}
			(B')\,\, \text{The grand measure }\mu_m\in L^p_{\loc}(\Omega),
		\end{equation*}
		a little more work using the proof of Proposition \ref{LL16} and the estimate \eqref{eq621.2} shows that $(A)\Longrightarrow (B')$. And since $(B')$ is stronger than $(B)$ we have $(B')\Longrightarrow (C)$ from Proposition \ref{BimplyC}. Therefore combining these with Proposition \ref{LL1} in the next subsection, we can replace $(B)$ by $(B')$ in Theorem \ref{C2}, and in particular $(B)$ and $(B')$ are equivalent. This is analogous to part of the main theorem of \cite{GL}. 
\end{rem}

%
%
%

\subsection{Proof of $(C)\Longrightarrow (A)$ in Theorem \ref{C2}}

\begin{prop}
	\label{LL1}
	Let $m:\Omega\to\R^2$ satisfy \eqref{ageq3}. Assume $m$ satisfies the $L^{p}$ kinetic equation in the sense of Definition \ref{def:KIN} for some $1<p\leq \frac{4}{3}$, then $m\in B^{\frac{1}{3}}_{3p,\infty,\loc}(\Omega)$ .
\end{prop}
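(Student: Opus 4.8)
The plan is to run the scheme of \cite[Theorem 2.6]{GL}, but with its coercive object replaced by a one-parameter family $\Delta_\alpha$ tuned to detect the $L^{3p}$ modulus of continuity of $m$ rather than merely the $L^3$ one. First I would localize: it suffices to fix $U\subset\subset U'\subset\subset\Omega$ and show $\sup_{|h|\le t}\|D^h m\|_{L^{3p}(U)}\lesssim t^{1/3}$ as $t\to 0$, since together with $\|m\|_{L^{3p}(U)}=|U|^{\frac{1}{3p}}<\infty$ this is exactly $m\in B^{\frac{1}{3}}_{3p,\infty}(U)$. It is convenient to rephrase the left-hand side in the kinetic picture: with $\chi(x,s):=\mathds{1}_{e^{is}\cdot m(x)>0}$ the indicator of the semicircular arc $\{s:|s-\theta(x)|<\pi/2\}$, the symmetric difference of two such arcs has measure equal to twice the geodesic distance of their centres, and $|D^h m(x)|=2\bigl|\sin(\tfrac12(\theta(x+h)-\theta(x)))\bigr|$ is comparable to that distance, so $|D^h m(x)|\approx \int_{\R/2\pi\mathbb Z}|D^h\chi(x,s)|\,ds$.

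Next I would introduce $\Delta_\alpha$: following \cite{GL}, for small $\alpha$ this is a weighted integral built from $\chi$ and a translate of $\chi$, subject to two requirements. The first is a \emph{coercivity} estimate, Lemma \ref{LB16}, of the schematic form $\int_U|D^h m|^{3p}\,dx\lesssim \Delta_\alpha(h)$ for $|h|\lesssim\alpha$; this is precisely where the $\alpha$-modification matters, since the object of \cite{GL} only produces the cubic ($L^3$) bound. The second is that $\Delta_\alpha$ interacts cleanly with the kinetic equation \eqref{eqinta3}. To exploit this I would mollify in $x$, set $\chi_\e=\chi*\rho_\e$ and $\sigma_\e=\sigma*\rho_\e$, and derive the identity (\ref{partial_h}): differentiating the regularised $\Delta_\alpha$ in the translation variable produces a term that pairs against $\partial_s\sigma_\e$, and an integration by parts in $s$ recasts it as a pairing of $\sigma_\e$ against derivatives of $\chi_\e$. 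Integrating in the translation variable from $0$ up to the prescribed value then expresses $\Delta_\alpha(h)$, modulo terms that vanish as $\e\to0$ by \eqref{eq:sigmaLp}, as an integral against the kinetic measure.

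The heart of the argument is to bound that integral against $\sigma$, which is the content of Lemmas \ref{lem:I_alpha} and \ref{lem:A_alpha}. Here I would follow the \emph{interaction estimate} of Varadhan (see \cite[Chapter 22]{tartar}, and its use in \cite{golse,gold,GL}): the pairing splits into a short-range (near-diagonal) contribution and a genuinely bilinear cross term. The short-range part is controlled directly by the disintegration bound $\nu(x)=\|\sigma_x\|_{\mathcal M(\R/2\pi\mathbb Z)}\in L^p_{\loc}$ via H\"older's inequality, using the trivial bound $|D^h m|\le 2$ to absorb surplus powers; the bilinear cross term is handled by a symmetrisation that converts it into boundary and lower-order terms which do not spoil the scaling. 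The outcome is an upper bound, uniform for $|h|\le t\lesssim\alpha$, of the schematic form $\Delta_\alpha(h)\lesssim t^{p}\,\|\nu\|_{L^p(U')}^{p}+o(1)$ as $\e\to0$. Balancing the H\"older exponents in this step --- the cubic coercivity exponent $3p$ against the single power of $\nu$ and the single power of $t$ --- is exactly what forces $p\le\frac43$ (equivalently $3p\le 4$); this is the only place the hypothesis is used, and it is the main obstacle, since for larger $p$ the bilinear term is no longer subordinate to the coercive one.

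Finally, combining the coercivity of Lemma \ref{LB16} with the upper bound of Lemmas \ref{lem:I_alpha}--\ref{lem:A_alpha}, then sending $\e\to0$ and choosing $\alpha\sim t$, yields $\sup_{|h|\le t}\int_U|D^h m|^{3p}\,dx\lesssim t^{p}$, i.e.\ $\sup_{|h|\le t}\|D^h m\|_{L^{3p}(U)}\lesssim t^{1/3}$. Since this holds for every $U\subset\subset\Omega$ and $\|m\|_{L^{3p}(U)}<\infty$ automatically, we conclude $m\in B^{\frac{1}{3}}_{3p,\infty,\loc}(\Omega)$, which is assertion $(A)$ of Theorem \ref{C2}.
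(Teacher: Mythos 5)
Your overall architecture matches the paper's: coercive functional $\Delta_\alpha$ built from the kinetic density $\chi$, the identity \eqref{partial_h} from the kinetic equation after mollification, upper bounds on the $I_\alpha$ and $A_\alpha$ terms, and a comparison of upper and lower bounds. However, there is a genuine conceptual error in how you treat $\alpha$. In the paper $\alpha$ is \emph{not} a small length-scale to be chosen $\sim t$ at the end; it is the fixed H\"older exponent $\alpha=3p-3\in(0,1]$ appearing in the weight $\varphi_\alpha(t)=t^\alpha$ near $t=0$, chosen precisely so that the coercivity computation $\int_0^{2\beta}\omega^\alpha(2\beta-\omega)\sin\omega\,d\omega\gtrsim\beta^{3+\alpha}$ produces the exponent $3+\alpha=3p$. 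The coercivity of Lemma \ref{LB16} is pointwise in $x$ and holds for \emph{all} admissible $h$, with no restriction $|h|\lesssim\alpha$. If you instead let $\alpha\sim t\to0$, the constant $c(\alpha)$ is not under control and the exponent $3+\alpha$ collapses back to $3$, so you would only recover the $B^{1/3}_{3,\infty}$ regularity of \cite{GL}, not $B^{1/3}_{3p,\infty}$. Relatedly, your diagnosis of where $p\le\frac43$ enters is off: it is needed (i) so that $\alpha=3p-3\le1$, which is what makes $\varphi_\alpha$ and $s\mapsto\mathcal W_\alpha(x,s)$ H\"older-$\alpha$ and allows the bound $|m(x+he_1)-m(x-he_1)|\lesssim|D^hm|^\alpha+|D^{-h}m|^\alpha$ in Lemma \ref{lem:I_alpha}, and (ii) so that the $|h|^{4/3}$ contribution of the $A_\alpha$ term is dominated by $|h|^p$.

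Two essential ingredients of the closing argument are also missing from your sketch. First, the upper bound on $I_\alpha$ involves $\big\lVert\gamma\,|D^{\tilde h}m|^{\alpha}\big\rVert_{L^{p'}}$, which after the identity $\alpha p'=3+\alpha=3p$ is a fractional power of the very quantity $\int\gamma|D^{\tilde h}m|^{3p}$ you are trying to bound; one must apply Young's inequality and absorb a small multiple of $\sup_{|\tilde h|\le|h|}\int\gamma|D^{\tilde h}m|^{3p}$ into the left-hand side. Without this absorption step the estimate is circular. Second, the $A_\alpha$ term produces $\int_{\Omega'}|D^{\tilde h}m|\,dx$, which is controlled by $C|h|^{1/3}$ only because $m\in B^{1/3}_{3,\infty,\mathrm{loc}}$ is already known from \cite[Theorem 2.6]{GL}; this a priori input is needed to make the $A_\alpha$ contribution scale like $|h|^{4/3}\le|h|^p$. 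Your "symmetrisation converts it into boundary and lower-order terms" does not capture this.
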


We start by introducing the analog of $\Delta$ used in Subsection 3.2 in \cite{GL}, which provides the right coercivity estimate in our case; see \eqref{cpeq30} below. To this end, we first define, for $\alpha>0$, the function $\varphi_{\alpha}(t):\R\to\R$ which satisfies
\begin{align}
\label{eqki2}
	&\bullet\;\varphi_{\alpha}\text{ is odd and }\pi\text{-periodic};\nn\\
	&\bullet\;\varphi_{\alpha}(t)=t^{\alpha}\text{ for }t\in [0,\pi/4];\nn\\
	&\bullet\;\varphi_{\alpha}> 0\text{ in }(0,\pi/2)\text{ and is smooth in }(0,\pi).
\end{align}
In the following we will take $\alpha=3p-3$ so that $0<\alpha\leq 1$ for $1<p\leq \frac 4 3$. 
In the sequel the functions will be defined in terms of $m$. Since $m$ is fixed, we will not explicitly show this dependence. We largely follow the notation of Section 3 in \cite{GL}.  Define $\theta(x)$ to be
\begin{equation}
	\label{eq:theta}
	\theta: \Omega\to [0,2\pi),\qd m(x)=e^{i\theta(x)}\text{ for a.e. }x\in\Omega.
\end{equation}
Next, for $(x,t)\in\Omega\times \R/ 2\pi \mathbb{Z}$ we define
\begin{equation}
	\label{eq:chi} \chi\lt(x,t\rt):=\mathds{1}_{e^{it}\cdot m(x)>0}. 
\end{equation}	
Finally, for $h\in\R$ and $e\in\mathbb{S}^1$, we define 
\begin{align}
	\label{cpeq10}
	\Delta_\alpha\lt(x,h,e\rt) :=\int_0^{2\pi}\int_0^{2\pi}\varphi_\alpha(s-t)\sin(s-t)D^{he}\chi(x,t)D^{he}\chi(x,s)\, dt\,ds,
\end{align}
where  $D^{he}\chi(x,\cdot)=\chi(x+he,\cdot)-\chi(x,\cdot)$. Our next lemma plays the role of \cite[Lemma 3.8]{GL}.

\begin{lem}
\label{LB16}
	Let $m:\Omega\to\mathbb{S}^1$ and $\alpha>0$. For all $x\in\Omega$, $e\in\mathbb{S}^1$ and $|h|<\mathrm{dist}\{x,\partial\Omega\}$, we have
	\begin{equation}
		\label{cpeq30}
		\Delta_{\alpha}(x,h,e) \geq c(\alpha)\, |D^{he}m(x)|^{3+\alpha}.
	\end{equation}
\end{lem}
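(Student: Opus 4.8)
\textbf{Proof proposal for Lemma \ref{LB16}.}

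The plan is to compute $\Delta_\alpha(x,h,e)$ by exploiting that $\chi(x,\cdot)$ and $\chi(x+he,\cdot)$ are indicator functions of half-circles. Fix $x$, $e$, $h$ and write $m=m(x)=e^{i\theta}$, $m'=m(x+he)=e^{i\theta'}$, so that $\chi(x,t)=\mathds{1}_{\cos(t-\theta)>0}=\mathds{1}_{t-\theta\in(-\pi/2,\pi/2)}$ and similarly for $\chi(x+he,\cdot)$ with $\theta'$ in place of $\theta$. The key algebraic fact is that for a symmetric integrand $K(s,t)=\varphi_\alpha(s-t)\sin(s-t)$, the double integral of $K(s,t)\,D^{he}\chi(x,t)\,D^{he}\chi(x,s)$ expands as
\begin{align}
\label{eq:deltaexpand}
\Delta_\alpha(x,h,e) = \iint K\,\chi'\chi' + \iint K\,\chi\chi - 2\iint K\,\chi\chi',
\end{align}
where I abbreviate $\chi=\chi(x,\cdot)$, $\chi'=\chi(x+he,\cdot)$. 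Since $K$ is a function of $s-t$ alone and $\pi$-periodicity combined with the sine factor makes $\iint_{[0,2\pi]^2}K\,\chi\chi$ and $\iint K\,\chi'\chi'$ depend only on the length of the arcs (both equal to $\pi$), the first two terms are equal to a common constant; call it $2C_0(\alpha)$. The cross term $\iint K\,\chi\chi'$ depends only on the angular difference $\beta:=\theta'-\theta \pmod{2\pi}$, which is the relevant geometric quantity since $|D^{he}m(x)|=|e^{i\theta'}-e^{i\theta}|=2|\sin(\beta/2)|$.

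Thus $\Delta_\alpha$ reduces to a function $g_\alpha(\beta)$ of one variable, and the task becomes showing $g_\alpha(\beta)\ge c(\alpha)\,|2\sin(\beta/2)|^{3+\alpha}$. First I would record that $g_\alpha$ is even, $2\pi$-periodic, and $g_\alpha(0)=0$ (when $\beta=0$ the two indicators coincide, all three terms in \eqref{eq:deltaexpand} cancel), consistent with the right-hand side vanishing. Next I would compute $g_\alpha(\beta)$ explicitly for $\beta\in[0,2\pi)$ by evaluating $2C_0(\alpha)-2\iint K\,\chi\chi'$: writing the overlap of the two half-circles as an arc of length $\pi-|\beta|$ (for $|\beta|\le\pi$), the double integral $\iint_{[0,2\pi]^2}\varphi_\alpha(s-t)\sin(s-t)\mathds{1}_{t\in I}\mathds{1}_{s\in I'}\,dt\,ds$ over two arcs $I,I'$ of length $\pi$ offset by $\beta$ can be reduced — after the substitution $u=s-t$ — to a one-dimensional integral of $\varphi_\alpha(u)\sin(u)$ against a triangular/tent weight supported near $u=\beta$. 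The upshot should be a clean formula, roughly $g_\alpha(\beta)= \int_{-\pi}^{\pi}\varphi_\alpha(u)\sin(u)\,w(u)\,du$ for an explicit nonnegative weight $w$ depending on $\beta$, from which one reads off the small-$\beta$ asymptotics $g_\alpha(\beta)\sim c\,\beta^{3+\alpha}$ using $\varphi_\alpha(u)=u^\alpha$ near $0$ and $\sin u\sim u$. Combined with the fact that $g_\alpha(\beta)>0$ for $\beta\in(0,2\pi)$ — which follows because $\varphi_\alpha\sin$ has the right sign structure on $(0,\pi)$ and the weight is concentrated there — and the obvious symmetry $g_\alpha(2\pi-\beta)$ versus $|2\sin(\beta/2)|$, a compactness argument on $\beta\in[\delta,2\pi-\delta]$ yields the uniform lower bound.

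The main obstacle will be making the reduction in \eqref{eq:deltaexpand} and the ensuing one-dimensional formula genuinely rigorous and, more importantly, verifying the \emph{strict positivity and correct order} of $g_\alpha(\beta)$ near the endpoint $\beta=\pi$ as well as near $\beta=0$. Near $\beta=0$ the $\beta^{3+\alpha}$ behavior requires care because of cancellation: the leading terms $2C_0(\alpha)$ and $-2\iint K\chi\chi'$ nearly cancel, so one must Taylor-expand the cross term to sufficiently high order — the extra power $\alpha$ beyond cubic is exactly what the weight $\varphi_\alpha(u)=u^\alpha$ contributes and must be tracked carefully. One also needs $c(\alpha)>0$, so the constant coming out of the asymptotic analysis must be shown to be strictly positive for every $\alpha\in(0,1]$ (indeed every $\alpha>0$); this should follow from the explicit integral but deserves a careful sign check. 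I expect the rest — periodicity, evenness, the compactness step — to be routine once the explicit formula for $g_\alpha$ is in hand, and this mirrors the structure of the corresponding argument in \cite{GL} for the unweighted quantity $\Delta$.
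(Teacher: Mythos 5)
Your proposal follows essentially the same route as the paper's proof (itself adapted from \cite[Lemma 3.8]{GL}): expand the square, use translation invariance of the kernel to reduce $\Delta_\alpha$ to a function of the angle between $m(x)$ and $m(x+he)$, derive an explicit one-dimensional integral of $\varphi_\alpha(\omega)\sin(\omega)$ against a tent-type weight — this is precisely \eqref{eqgn1} — and then extract the $\beta^{3+\alpha}$ rate near $\beta=0$ via the substitution $\omega=2\beta v$, while bounding the integral below by positive constants for angles bounded away from $0$. The cancellation you flag near $\beta=0$ is handled exactly (not by Taylor expansion) through the closed-form tent-weight formula, whose integrand is nonnegative and manifestly of order $\beta^{3+\alpha}$ for small angles, so your plan is sound and matches the paper's argument.
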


\begin{proof}
	Since the proof follows almost the same lines of the proof of \cite[Lemma 3.8]{GL}, we only sketch the main ingredients focusing on the differences. Given $x\in\Omega$, $e\in\mathbb{S}^1$ and $|h|<\mathrm{dist}\{x,\partial\Omega\}$, exactly the same arguments as those at the beginning of the proof of \cite[Lemma 3.8]{GL} show that it is sufficient to assume $m(x+he)=e^{-i\beta}$ and $m(x)=e^{i\beta}$ for some $\beta\in[0,\pi/2]$. Thus \eqref{cpeq30} becomes
	\begin{equation*}
		\Xi_{\alpha}\lt(e^{-i\beta}, e^{i\beta}\rt)\geq c(\alpha)\lt(2\sin(\beta)\rt)^{3+\alpha}
	\end{equation*}
	for
	\begin{align*}
		\Xi_{\alpha}\lt(e^{-i\beta}, e^{i\beta}\rt):=&\int_0^{2\pi}\int_0^{2\pi}\varphi_\alpha(s-t)\sin(s-t)\\
		&\cdot\lt(\mathds{1}_{e^{it}\cdot e^{-i\beta}>0}-\mathds{1}_{e^{it}\cdot e^{i\beta}>0}\rt)\lt(\mathds{1}_{e^{is}\cdot e^{-i\beta}>0}-\mathds{1}_{e^{is}\cdot e^{i\beta}>0}\rt)\, dt\,ds.
	\end{align*}
	It is in turn sufficient to show
	\begin{equation}
		\label{cpeq146}
		\Xi_{\alpha}\lt(e^{-i\beta}, e^{i\beta}\rt)\geq \ti{c}(\alpha)\beta^{3+\alpha}.
	\end{equation}
	
	Following carefully the calculations in \cite[Lemma~3.8]{GL},  we have  
	\begin{equation}
		\label{eqgn1}
		\Xi_{\alpha}\lt(e^{-i\beta}, e^{i\beta}\rt) = 8\int_0^{2\beta}\varphi_{\alpha}(\omega)(2\beta-\omega)\sin(\omega)\, d\omega\qd\text{ for }\beta\in [0,\pi/4]
	\end{equation}
	and 
	\begin{align*}
		\Xi_{\alpha}\lt(e^{-i\beta},e^{i\beta}\rt) & = 
		8\int_0^{\pi-2\beta}\varphi_\alpha(\omega)(2\beta-\omega)\sin(\omega)\,d\omega \nn\\
		&\quad + 8\int_{\pi-2\beta}^{\frac{\pi}{2}}\varphi_\alpha(\omega)(\pi-2\omega)\sin(\omega) \, d\omega\quad\text{ for }\beta\in [\pi/4,\pi/2].
	\end{align*}
	Recall that $\varphi_{\alpha}(\omega)=\omega^{\alpha}$ for $\omega\in[0,\pi/4]$. Thus for $\beta\in \lt[0,\pi/8\rt]$, we have (after a change of variable $\omega=2\beta v$)
	\begin{align}
		\label{cpeq145.1}
		\Xi_{\alpha}\lt(e^{-i\beta}, e^{i\beta}\rt)   &\overset{(\ref{eqgn1})}{=}8( 2\beta)^{2+\alpha}\int_0^{1}v^{\alpha}(1-v)\sin (2\beta v)\, dv \nn\\
		&\gtrsim  (2\beta)^{3+\alpha}\int_0^1 v^{1+\alpha}(1-v)\,dv \gtrsim\beta^{3+\alpha}.
	\end{align}
	For $\beta\in [\pi/8,\pi/4]$, recalling \eqref{eqki2}, it is clear that
	\begin{equation*} 
		\Xi_{\alpha}\lt(e^{-i\beta}, e^{i\beta}\rt)\geq8\int_{0}^{\frac{\pi}{4}}\varphi_\alpha(\omega)\lt(\frac\pi 4-\omega\rt)\sin(\omega) \, d\omega=c_1>0.
	\end{equation*}
	 For $\beta\in[\pi/4,\pi/2]$, it is clear that $2\beta-\omega\geq \frac{\pi}{2}-\omega$ and $\pi-2\omega\geq \frac{\pi}{2}-\omega$ for all $\omega\in[0,\pi/2]$. Thus we have
	 \begin{equation*} 
	 	\Xi_{\alpha}\lt(e^{-i\beta}, e^{i\beta}\rt)\geq8\int_{0}^{\frac{\pi}{2}}\varphi_\alpha(\omega)\lt(\frac{\pi}{2}-\omega\rt)\sin(\omega) \, d\omega=c_2>0.
	 \end{equation*}
	 It follows that  $\Xi_{\alpha}\lt(e^{-i\beta}, e^{i\beta}\rt) \geq \min\{c_1,c_2\} \gtrsim \beta^{3+\alpha}$ for $\beta\in[\pi/8,\pi/2]$. This together with \eqref{cpeq145.1} establishes (\ref{cpeq146}) and hence \eqref{cpeq30}.
\end{proof}

Next we need a technical lemma.

\begin{lem}
\label{auxteclem1}
Let $m:\Omega\to\mathbb{S}^1$ and $q\in L^{\infty}\lt(\mathbb{R}/ 2\pi \mathbb{Z}\rt)$. Recalling $\chi(x,t)$ defined in \eqref{eq:chi}, for any $x_0,x_1\in \Omega$ we have
\begin{equation}
	\label{eqaux1}
	\lt|\int_0^{2\pi} \lt(\chi(x_0,s)-\chi(x_1,s) \rt) q(s)\, ds\rt|\lesssim\|q\|_{L^{\infty}\lt(\mathbb{R}/ 2\pi \mathbb{Z} \rt)}\lt|m(x_0)-m(x_1)\rt|.
\end{equation}
In addition, denoting by $\chi_{\ep}(x,t)$ the convolution of $\chi(x,t)$ in the $x$ variable, i.e. $\chi_{\ep}=\chi\ast\rho_{\ep}$ for a smooth approximation of the identity $\rho_{\ep}(x)$, we have for $x\in\Omega$ and $\ep<\mathrm{dist}\{x,\partial\Omega\}$
\begin{equation}
	\label{eqaux2}
	\lt|\int_0^{2\pi} \lt(\chi_{\ep}(x,s)-\chi(x,s) \rt) q(s)\, ds\rt|\lesssim\|q\|_{L^{\infty}\lt(\mathbb{R}/ 2\pi \mathbb{Z} \rt)}\Xint-_{B_{\ep}(x)}|m(z)-m(x)|\,dz.
\end{equation}
\end{lem}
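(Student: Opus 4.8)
The plan is to establish \eqref{eqaux1} by a direct geometric computation on the circle, and then to deduce \eqref{eqaux2} from it by averaging \eqref{eqaux1} over translations against the mollifier $\rho_\ep$. So the real content is \eqref{eqaux1}; the second estimate is a formal consequence.

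For \eqref{eqaux1}, I would use that for fixed $x$ the function $s\mapsto\chi(x,s)=\mathds{1}_{e^{is}\cdot m(x)>0}$ is the indicator of the open arc $I(x):=\{s\in\R/2\pi\Z:\cos(s-\theta(x))>0\}$ of length $\pi$ centered at the angle $\theta(x)$ of $m(x)$. Let $\delta\in[0,\pi]$ denote the geodesic distance between $\theta(x_0)$ and $\theta(x_1)$ on $\R/2\pi\Z$. An elementary case check (normalize the two centers to $\pm\delta/2$) shows that $I(x_0)\cap I(x_1)$ is an arc of length $\pi-\delta$, so that $\mathcal L^1\big(I(x_0)\triangle I(x_1)\big)=2\delta$. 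Hence
\[
\Big|\int_0^{2\pi}\big(\chi(x_0,s)-\chi(x_1,s)\big)q(s)\,ds\Big|\le\|q\|_{L^\infty}\,\mathcal L^1\big(I(x_0)\triangle I(x_1)\big)=2\delta\,\|q\|_{L^\infty}.
\]
It then remains to compare $\delta$ with the chord length $|m(x_0)-m(x_1)|=\big|e^{i\theta(x_0)}-e^{i\theta(x_1)}\big|=2\sin(\delta/2)$: since $\sin u\ge\tfrac{2}{\pi} u$ for $u\in[0,\pi/2]$ and $\delta/2\in[0,\pi/2]$, we get $2\delta\le\pi\,|m(x_0)-m(x_1)|$, which gives \eqref{eqaux1} with constant $\pi$.

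For \eqref{eqaux2}, using $\int_{\R^2}\rho_\ep=1$ I would write $\chi_\ep(x,s)-\chi(x,s)=\int_{\R^2}\big(\chi(x-z,s)-\chi(x,s)\big)\rho_\ep(z)\,dz$, multiply by $q(s)$, integrate in $s$, and exchange the order of integration (legitimate since the integrand is bounded and $\rho_\ep$ has compact support) to obtain
\[
\int_0^{2\pi}\big(\chi_\ep(x,s)-\chi(x,s)\big)q(s)\,ds=\int_{\R^2}\rho_\ep(z)\Big(\int_0^{2\pi}\big(\chi(x-z,s)-\chi(x,s)\big)q(s)\,ds\Big)dz.
\]
Applying \eqref{eqaux1} with $x_0=x-z$, $x_1=x$ to the inner integral, using $0\le\rho_\ep(z)\lesssim\ep^{-2}\mathds{1}_{B_\ep(0)}(z)$ (as $\rho$ is a fixed bounded kernel supported in $B_1$), and changing variables $y=x-z$ yields
\[
\Big|\int_0^{2\pi}\big(\chi_\ep(x,s)-\chi(x,s)\big)q(s)\,ds\Big|\lesssim\|q\|_{L^\infty}\,\ep^{-2}\!\int_{B_\ep(x)}|m(y)-m(x)|\,dy\lesssim\|q\|_{L^\infty}\Xint-_{B_\ep(x)}|m(y)-m(x)|\,dy,
\]
which is \eqref{eqaux2}.

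There is no genuine obstacle in this lemma. The only points requiring a little care are the elementary-but-fiddly computation of $\mathcal L^1\big(I(x_0)\triangle I(x_1)\big)$, where one should work modulo $2\pi$ and distinguish whether the two half-circle arcs are nested or cross, and keeping track of the scaling $\ep^{-2}\simeq|B_\ep|^{-1}$ when passing from the mollified integral to the average. Both estimates hold with purely dimensional constants (depending only on $\rho$).
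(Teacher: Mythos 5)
Your proof is correct and follows essentially the same route as the paper's: both arguments reduce \eqref{eqaux1} to the geodesic angular distance between $\theta(x_0)$ and $\theta(x_1)$ (you via the measure of the symmetric difference of the two half-circle arcs, the paper via shifting one integral by a multiple of $2\pi$ and bounding the difference of the two length-$\pi$ intervals), and then compare that angular distance to the chord $|m(x_0)-m(x_1)|$ using the same elementary inequality in equivalent form ($\sin u\ge \tfrac 2\pi u$ versus $2-2\cos t\gtrsim t^2$). The deduction of \eqref{eqaux2} by exchanging the order of integration against $\rho_\ep$ and invoking \eqref{eqaux1} is identical to the paper's.
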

\begin{proof} 
	Note that for any $a\in \mathbb{R}$ we have $\min\lt\{\lt|a-2\pi k\rt|:k\in \mathbb{Z}\rt\}\leq \pi$. So given $x_0,x_1\in\Omega$ there exists $k_0=k_0(x_0,x_1)\in \mathbb{Z}$ such that $\lt|\theta(x_1)-\theta(x_0)-2\pi k_0\rt|\leq \pi$, where $\theta$ satisfies \eqref{eq:theta}.
Define $\wt{\theta}(x_0)=\theta(x_0)+2\pi k_0$, so that $|\wt{\theta}(x_0)-\theta(x_1)|\leq \pi$. It is easily checked that $2-2\cos(t)\gtrsim t^2$ for $|t|\leq\pi$. It follows that
\begin{equation}
\label{eqaux11}
\lt|\wt{\theta}(x_0)-\theta(x_1) \rt|^2\lesssim 2-2\cos\lt(\wt{\theta}(x_0)-\theta(x_1)\rt)=\lt|m(x_0)-m(x_1)\rt|^2.
\end{equation}
As the function $q$ is $2\pi$-periodic, we have
\begin{align*}
\lt|\int_0^{2\pi} \lt(\chi(x_0,s)-\chi(x_1,s) \rt) q(s) ds\rt|&=\lt|\int_{\wt{\theta}(x_0)-\frac{\pi}{2}}^{\wt{\theta}(x_0)+\frac{\pi}{2}} q(s)\, ds
-\int_{\theta(x_1)-\frac{\pi}{2}}^{\theta(x_1)+\frac{\pi}{2}} q(s)\, ds\rt|\nn\\
&\lesssim \|q\|_{L^{\infty}\lt(\mathbb{R}/ 2\pi \mathbb{Z} \rt)}\lt|\wt{\theta}(x_0)-\theta(x_1)\rt|\nn\\
&\overset{(\ref{eqaux11})}{\lesssim} \|q\|_{L^{\infty}\lt(\mathbb{R}/ 2\pi \mathbb{Z} \rt)}\lt|m(x_0)-m(x_1)\rt|,
\end{align*}
which establishes (\ref{eqaux1}). 

Next, using $\int_{\R^2}\rho_{\ep}=1$ and $\rho_{\ep}\lesssim 1/\ep^2$, we find
\begin{align*}
	&\lt|\int_0^{2\pi} \lt(\chi_{\ep}(x,s)-\chi(x,s) \rt) q(s) ds\rt|\\
	&\qd\qd=\lt|\int_0^{2\pi}\lt(\int_{B_{\ep}(x)}\rho_{\ep}(x-z) \lt(\chi(z,s)-\chi(x,s) \rt) dz\rt)q(s) \,ds\rt|\\
	&\qd\qd=\lt|\int_{B_{\ep}(x)}\rho_{\ep}(x-z)\int_0^{2\pi} \lt(\chi(z,s)-\chi(x,s) \rt) q(s) \,ds\,dz\rt|\\
	&\qd\qd\overset{\eqref{eqaux1}}{\lesssim}\|q\|_{L^{\infty}(\mathbb{R}/ 2\pi \mathbb{Z})}\cdot\frac{1}{\ep^2}\int_{B_{\ep}(x)}|m(z)-m(x)|\,dz,
\end{align*}
which establishes \eqref{eqaux2}.
%
\end{proof}

\begin{proof}[Proof of Proposition \ref{LL1}]
Following \cite{GL} we introduce the regularization of $\Delta_{\alpha}$ defined in \eqref{cpeq10} in the $x$ variable. Specifically, for $x\in\Omega, e\in\mathbb{S}^1$ and $|h|<\mathrm{dist}\{x,\partial\Omega\}$ define
\begin{align*}
\Delta^{\ep}_\alpha\lt(x,h,e\rt) & :=\int_0^{2\pi}\int_0^{2\pi}\varphi_\alpha(s-t)\sin(s-t)D^{he}\chi_{\ep}(x,t)D^{he}\chi_{\ep}(x,s)\, dt\,ds,
\end{align*}
for $\ep>0$ sufficiently small, where recall that $\chi_{\ep}=\chi\ast\rho_{\ep}$ for a smooth approximation of the identity $\rho_{\ep}(x)$. Assume without loss of generality that $e=e_1$, and the general case can be dealt with by rotation. In the sequel we omit the dependence of all the quantities on $e$. For all $x\in\Omega$ and $|h|+\ep<\mathrm{dist}\{x,\partial\Omega\}$, the computations in \cite[Lemma 3.9]{GL} using the kinetic equation \eqref{eqinta3} give
\begin{equation}
	\label{partial_h}
	\partial_h \Delta_{\alpha}^{\ep}(x,h)=I_{\alpha}^{\ep}(x,h)+\na\cdot A_{\alpha}^{\ep}(x,h), 
\end{equation}
where, denoting $\sigma_{\ep}=\sigma\ast\rho_{\ep}$ and $\chi_{\ep}^h(x,s)=\chi_{\ep}(x+he_1,s)$ with the same meaning extended to all other applicable functions, we have
\begin{align}
\label{cpeq297}
I_{\alpha}^{\e}(x,h) &=-2\int_0^{2\pi}\sigma_\e^h(x,t) \int_0^{2\pi} \varphi'_{\alpha}(s-t)\chi_\e(x,s)\sin(s) \, ds\, dt \nn\\
&\quad + 2\int_0^{2\pi} \sigma_\e(x,t) \int_0^{2\pi} \varphi'_{\alpha}(s-t)\chi_\e^h(x,s)\sin(s) \, ds\, dt,
\end{align}
and
\begin{align*}
A_{\alpha,1}^{\e}(x,h) &= 2\iint_{\lt[0,2\pi\rt]\times \lt[0,2\pi\rt]}  \varphi_{\alpha}(s-t)\sin(s)\cos(t) \,\chi_\e^h(x,s) D^h\chi_\e(x,t)\, ds\, dt, \nn\\
A_{\alpha,2}^{\e}(x,h)&= 2\iint_{\lt[0,2\pi\rt]\times \lt[0,2\pi\rt]} \varphi_{\alpha}(s-t)\sin(s) \sin(t) \,\chi_\e(x,s) \chi^h_\e(x,t) \, ds\, dt.
\end{align*}
In the following, we consider an arbitrary fixed domain $U\subset\subset \Omega$ so that there exist intermediate domains $U\subset\subset\Omega'\subset\subset\Omega''\subset\subset\Omega$. \nl

\noindent\em Step 1. \rm We will show that for any fixed $|h|<\mathrm{dist}\{\Omega',\partial\Omega''\}$ and a.e.\ $x\in \Omega'$, we have (recalling $\theta(x)$ given in \eqref{eq:theta})
\begin{equation}
\label{eqokl62}
\lim_{\ep\rightarrow 0} I^{\ep}_{\alpha}(x,h)=I_{\alpha}(x,h),
\end{equation}
\begin{align}
	\label{cpeq298.2}
	\lim_{\ep\rightarrow 0} A_{\alpha,i}^{\e}(x,h) &= A_{\alpha,i}(x,h)\qd\text{ for }i=1,2,
\end{align} 
and
\begin{equation}
	\label{eqokl50.3}
	\lim_{\ep \rightarrow 0} \Delta^{\ep}_{\alpha}(x,h)=\Delta_{\alpha}(x,h),
\end{equation}
where 
\begin{align}
\label{cpeq18}
I_{\alpha}(x,h)=&-2\int_0^{2\pi} \lt(\int_{\theta(x)-\frac{\pi}{2}}
^{\theta(x)+\frac{\pi}{2}} \varphi'_{\alpha}(s-t)\sin(s) \, ds\rt)  d\sigma_x^h(t)\nn\\
&+ 2\int_0^{2\pi}\lt(  \int_{\theta^h(x)-\frac{\pi}{2}}
^{\theta^h(x)+\frac{\pi}{2}}  \varphi'_{\alpha}(s-t)\sin(s) \, ds\rt)  d\sigma_x(t),
\end{align}
\begin{align}
	\label{eq:A_alpha}
	A_{\alpha,1}(x,h) &= 2\iint_{\lt[0,2\pi\rt]\times \lt[0,2\pi\rt]}  \varphi_{\alpha}(s-t)\sin(s)\cos(t) \,\chi^h(x,s) D^h\chi(x,t)\, ds\, dt, \nn\\
	A_{\alpha,2}(x,h)&=2\iint_{\lt[0,2\pi\rt]\times \lt[0,2\pi\rt]} \varphi_{\alpha}(s-t)\sin(s) \sin(t) \,\chi(x,s) \chi^h(x,t) \, ds\, dt,
\end{align}
and $\Delta_{\alpha}(x,h)=\Delta_{\alpha}(x,h,e_1)$ is given in \eqref{cpeq10}.\nl

\noindent\em Proof of Step 1. \rm For any  $\psi\in C^0\lt(\mathbb{R}/ 2\pi\mathbb{Z}\rt)$, define $\mathcal{F}_{\psi}(x):=\int_0^{2\pi}\psi(t)\,d\sigma_x(t)$ for a.e. $x\in\Omega$. It follows that
 \begin{equation}
 	\label{eq:F}
 \lt|\FI_{\psi}(x)\rt|\leq \|\psi\|_{C^0(\R/2\pi\mathbb{Z})}\nu(x)\qd\text{ for a.e. }x\in\Omega\text{ and all }\psi\in C^0\lt(\mathbb{R}/ 2\pi\mathbb{Z}\rt),
 \end{equation}
  where recall from \eqref{eq:sigmaLp} that $\nu(x)=\|\sigma_x\|_{\mathcal{M}(\R/2\pi\mathbb{Z})}\in L^p_{\loc}(\Omega)$. Thus $\FI_{\psi}\in L^p_{\loc}(\Omega)$ for all $\psi\in C^0\lt(\mathbb{R}/ 2\pi\mathbb{Z}\rt)$. By definition of $\sigma_{\ep}$ and \eqref{eq30.2} we have
\begin{align}
\label{eqokl61}
 &\int_0^{2\pi}\psi(t) \sigma_{\ep}(x,t)\, dt= \int_0^{2\pi}\psi(t)\int_{B_{\ep}(x)}\rho_{\ep}(x-z)\,d\sigma(z,t)\nn\\
 &\qd\qd\qd\qd\qd\qd=\int_{B_\e(x)}\lt(\int_0^{2\pi}\psi(t)\,d\sigma_z(t)\rt)\rho_\e(x-z)\,dz=\int_{B_{\e}(x)}\FI_{\psi}(z)\rho_\e(x-z)\,dz.
\end{align}
Combining \eqref{eqokl61} with \eqref{eq:F}, we obtain that, for all Lebesgue points $x$ of $\nu$ with $\nu(x)<\infty$, there exists $\ep_0(x)>0$ such that
\begin{equation*}
	\lt|\int_0^{2\pi}\psi(t) \sigma_{\ep}(x,t)\, dt\rt|\leq \|\psi\|_{C^0(\R/2\pi\mathbb{Z})}\int_{B_{\e}(x)}\nu(z)\rho_\e(x-z)\,dz \leq C\|\psi\|_{C^0(\R/2\pi\mathbb{Z})}\nu(x)
\end{equation*}
for all $\psi\in C^0\lt(\mathbb{R}/ 2\pi\mathbb{Z}\rt)$ and all  $\ep\in \lt(0,\ep_0(x)\rt)$. It follows that
\begin{equation}
	\label{control:sigma_e} \|\sigma_\e(x,\cdot)\|_{L^1\lt(\mathbb{R}/ 2\pi\mathbb{Z}\rt)}\leq  C\nu(x)<\infty\qd\text{  for all }\ep \in \lt(0,\ep_0(x)\rt).
\end{equation}	

Let $\{\psi_j\}_j$ be a countable dense subset of  $C^0\lt(\mathbb{R}/ 2\pi\mathbb{Z}\rt)$, and  $\Lambda\subset\Omega$ be the intersection of the Lebesgue points of $\FI_{\psi_j}$ for all $j$ and the Lebesgue points of $\nu$ with $\nu(x)<\infty$. We know $|\Omega\setminus\Lambda|=0$. It follows from \eqref{eqokl61} that
\begin{equation}
	\label{eq:sigma}
	\lim_{\ep\to 0}\int_0^{2\pi} \psi_j(t)\sigma_\e(x,t)\,dt=\FI_{\psi_j}(x)=\int_0^{2\pi}\psi_j(t)\,d \sigma_x(t)\qd\text{ for all }j\text{ and all }x\in\Omega'\cap\Lambda. 
\end{equation}
By density of $\{\psi_j\}_j$ in $C^0\lt(\mathbb{R}/ 2\pi\mathbb{Z}\rt)$ and \eqref{control:sigma_e},  one can extend \eqref{eq:sigma} to all $\psi\in C^0\lt(\mathbb{R}/ 2\pi\mathbb{Z}\rt)$ to deduce that
\begin{equation}
	\label{eqokl41}
	\sigma_{\ep}(x,\cdot)\mathcal{L}^1\rightharpoonup \sigma_{x}(\cdot)\qd\text{ in }\mathcal{M}\lt(\mathbb{R}/ 2\pi\mathbb{Z}\rt)\text{ as }\ep\rightarrow 0\text{ for all }x\in \Omega'\cap \Lambda.
\end{equation}

We will use (\ref{eqokl41}) to first find the limit of the second term of $I^{\ep}_{\alpha}$ (recalling (\ref{cpeq297})) as $\ep\rightarrow 0$.
By \eqref{eqaux2} of Lemma \ref{auxteclem1}, for all $t\in\mathbb{R}/ 2\pi\mathbb{Z}$ we have
\begin{align*}
&\lt|\int_0^{2\pi} \varphi_{\alpha}'(s-t)\sin(s)\chi^h(x,s) ds-
\int_0^{2\pi}\varphi_{\alpha}'(s-t)\sin(s)\chi_{\ep}^h(x,s) ds\rt|\nn\\
&\qd \lesssim \|\varphi_{\alpha}'\|_{C^0\lt(\mathbb{R}/ 2\pi\mathbb{Z}   \rt)}\Xint-_{B_\e(x)}|m^h(z)-m^h(x)|\,dz, \nn
\end{align*}
and hence for all Lebesgue points $x$ of $m^h$ we have
\begin{equation*}
\int_0^{2\pi} \varphi_{\alpha}'(s-t)\sin(s)\chi_\e^h(x,s) ds\rightarrow \int_0^{2\pi} \varphi_{\alpha}'(s-t)\sin(s)\chi^h(x,s) ds\qd\text{ in }C_t^0\lt(\mathbb{R}/ 2\pi \mathbb{Z} \rt).
\end{equation*}
This together with \eqref{control:sigma_e} and \eqref{eqokl41} shows 
\begin{align*}
&\lim_{\ep\rightarrow 0} \iint_{\lt[0,2\pi\rt]\times \lt[0,2\pi\rt]}  \varphi_{\alpha}'(s-t)\sin(s)\chi_{\ep}^h(x,s) \sigma_{\ep}(x,t) ds \, dt\nn\\
&\qd\qd\qd\qd=\iint_{\lt[0,2\pi\rt]\times \lt[0,2\pi\rt]}  \varphi_{\alpha}'(s-t)\sin(s) \chi^h(x,s)ds \, d\sigma_{x}(t)\qd\text{ for a.e. }x\in \Omega'.
\end{align*}
In exactly the same way we can deal with the first term in $I^{\ep}_{\alpha}$ and hence establish
(\ref{eqokl62}). Finally, \eqref{cpeq298.2} and \eqref{eqokl50.3} follow from \eqref{eqaux2} and straightforward estimates. This completes the proof of Step 1. \nl

\noindent\em Proof of Proposition \ref{LL1} completed. \rm Let
\begin{equation}
	\label{alpha}
	\alpha=3p-3.
\end{equation}	
We take a smooth nonnegative cut off function $\gamma\in C^{\infty}_c(\Omega')$ with $\|\gamma\|_{L^\infty(\Omega)}\leq 1$ and $\gamma\equiv 1$ on $\overline U$. Using \eqref{partial_h}, the Dominated Convergence Theorem and Step 1,  we have for all $|h|<\mathrm{dist}\{\Omega',\partial\Omega''\}$ that
\begin{align}
\label{eqfgg1}
&\int_{\Omega} \gamma(x) \Delta_{\alpha}(x,h) \, dx\nn\\
&\qd\overset{(\ref{eqokl50.3})}{=} 
\lim_{\ep\rightarrow 0}\int_{\Omega} \gamma(x) \Delta^{\ep}_{\alpha}(x,h) \, dx\nn\\
&\qd\overset{(\ref{partial_h})}{=}\lim_{\ep\rightarrow 0} \lt(\int_{0}^h \int_{\Omega} \gamma(x) I^{\ep}_{\alpha}(x,\hi) dx\, d\hi- \int_{0}^h \int_{\Omega} \na\gamma(x)\cdot  A_{\alpha}^{\ep}(x,\hi) dx\, d\hi \rt)\nn\\
&\qd\overset{(\ref{eqokl62}),(\ref{cpeq298.2})}{=}\int_{0}^h \int_{\Omega} \gamma(x) I_{\alpha}(x,\hi) dx\, d\hi- \int_{0}^h \int_{\Omega} \na\gamma(x)\cdot  A_{\alpha}(x,\hi) dx\, d\hi.
\end{align}
In the following Lemmas \ref{lem:I_alpha} and \ref{lem:A_alpha}, we will show
\begin{align}
	\label{cpeq283}
	\lt|\int_{\Omega} I_{\alpha}(x,h)\,\gamma(x)\, dx\rt|\leq C\lt(\lt\lVert\gamma\lt(\lt|D^h m\rt|^{\alpha}+\lt|D^{-h}m\rt|^{\alpha}\rt)\rt\rVert_{L^{p'}(\Omega)}+\lt\lVert\na \gamma\rt\rVert_{L^{\infty}(\Omega)}|h|\rt)\|\nu\|_{L^{p}(\Omega'')}
\end{align}
and
\begin{equation}
\label{estA_alpha}
\lt|\int_{\Omega}A_{\alpha}(x,h) \cdot  \na \gamma(x)\,dx \rt|\leq
C\int_{\Omega}\lt|D^h m\rt|\lt|\na\gamma\rt|\,dx.
\end{equation}
We combine the lower bound established in (\ref{cpeq30}) of Lemma \ref{LB16} with the upper bounds \eqref{cpeq283} and \eqref{estA_alpha} and apply them to (\ref{eqfgg1}) to deduce that
\begin{align}
\label{eqokl105}
\int_{\Omega} \gamma(x) \lt|D^hm(x)\rt|^{3+\alpha} \, dx &\leq C\lt|h\rt|\sup_{|\hi|\in \lt[0,|h|\rt]}\lt\lVert\lt(\lt|D^{\hi} m\rt|^{\alpha}+\lt|D^{-\hi} m\rt|^{\alpha} \rt)\gamma\rt\rVert_{L^{p'}(\Omega)} \lt\lVert\nu\rt\rVert_{L^{p}(\Omega'')}\nn\\
&+C\lt|h\rt|^2\lt\lVert\na \gamma\rt\rVert_{L^{\infty}(\Omega)}\lt\lVert\nu\rt\rVert_{L^{p}(\Omega'')}\nn\\
&+C\lt|h\rt|\lt\lVert\na\gamma\rt\rVert_{L^{\infty}(\Omega)}\sup_{|\hi|\in \lt[0,|h|\rt]}\int_{\Omega'} \lt|D^{\hi} m\rt|\, dx.
\end{align}

Recalling \eqref{alpha}, we have $\alpha p'=3+\alpha=3p$,
and since $0\leq\gamma\leq 1$, we have
\begin{align*}
\lt\lVert\gamma\lt|D^h m\rt|^{\alpha}\rt\rVert_{L^{p'}(\Omega)}=\lt(\int_{\Omega} \gamma^{p'} \lt|D^h m\rt|^{3p} \, dx\rt)^{\frac{1}{p'}}\leq \lt(\int_{\Omega} \gamma \lt|D^h m\rt|^{3p} \, dx\rt)^{\frac{1}{p'}}.
\end{align*}
Thus it follows from Young's inequality that
\begin{align}
\label{cpeq303}
&\lt|h\rt| \sup_{\lt\{\hi:\lt|\hi\rt|\leq \lt|h\rt|\rt\}}\lt\lVert\gamma\lt|D^{\hi} m\rt|^{\alpha}\rt\rVert_{L^{p'}(\Omega)}\nn\\
&\qd \leq \lt|h\rt|\sup_{\lt\{\hi:\lt|\hi\rt|\leq \lt|h\rt|\rt\}}\lt(\int_{\Omega} \gamma \lt|D^{\hi} m\rt|^{3p} \, dx\rt)^{\frac{1}{p'}}\nn\\
&\qd\leq C(\delta)|h|^p+\delta\sup_{\lt\{\hi:\lt|\hi\rt|\leq \lt|h\rt|\rt\}}\lt(\int_{\Omega} \gamma \lt|D^{\hi} m\rt|^{3p} \, dx\rt),\qd\forall \delta>0.
\end{align}
Next by \cite[Theorem 2.6]{GL}, we know $m\in B^{\frac 13}_{3,\infty,\loc}(\Omega)$. Thus, for all $|h|$ sufficiently small we have
\begin{equation}
	\label{estlastterm}
	\int_{\Omega'}\lt|D^hm\rt|\,dx\leq C \lt\lVert D^h m\rt\rVert_{L^3(\Omega')}\leq C |h|^{\frac 13}.
\end{equation}
Putting \eqref{cpeq303} and \eqref{estlastterm} into \eqref{eqokl105} and using $3+\alpha=3p$, we readily deduce
\begin{align*}
\int_{\Omega}\gamma \lt|D^{h} m\rt|^{3p}\, dx  &\leq
C\lt(C(\delta)|h|^p+\delta\sup_{\lt\{\hi:\lt|\hi\rt|\leq \lt|h\rt|\rt\}}\lt(\int_{\Omega} \gamma \lt|D^{\hi} m\rt|^{3p} \, dx\rt)\rt) \lt\lVert\nu\rt\rVert_{L^{p}(\Omega'')}\nn\\
&\qd+C\lt|h\rt|^2\lt\lVert\na \gamma\rt\rVert_{L^{\infty}(\Omega)}\lt\lVert\nu\rt\rVert_{L^{p}(\Omega'')}+ C|h|^{\frac 43}\lt\lVert\na \gamma\rt\rVert_{L^{\infty}(\Omega)}.
\end{align*}
Taking $\delta>0$ sufficiently small and recalling $p\in \lt(1,\frac 43\rt]$, we obtain, for all $0<\xi<\mathrm{dist}\{\Omega',\partial\Omega''\}$, 
\begin{align*}
\sup_{\lt\{h:\lt|h\rt|\leq \xi\rt\}} \int_{\Omega}\gamma\lt|D^{h} m\rt|^{3p}\, dx & \leq \frac{1}{2}\sup_{\lt\{h:\lt|h\rt|\leq \xi\rt\}}\sup_{\lt\{\hi:\lt|\hi\rt|\leq \lt|h\rt|\rt\}}\int_{\Omega} \gamma\lt|D^{\hi} m\rt|^{3p} dx +C \sup_{\lt\{h:\lt|h\rt|\leq \xi\rt\}}\lt|h\rt|^{p}\nn\\
&=\frac{1}{2}\sup_{\lt\{h:\lt|h\rt|\leq \xi\rt\}}\int_{\Omega} \gamma\lt|D^{h} m\rt|^{3p} dx+C \xi^{p}.
\end{align*}
Recalling $\gamma\equiv 1$ on $\overline U$, we deduce from the above that
\begin{equation*}
\sup_{\lt\{h:\lt|h\rt|\leq \xi\rt\}} \int_{U}\gamma \lt|D^{h} m\rt|^{3p}\, dx \leq  C\xi^{p}\qd\text{ for all }0<\xi<\mathrm{dist}\{\Omega',\partial\Omega''\}.
\end{equation*}
For larger values of $\xi$, the boundedness of $m$ implies that $\xi^{-\frac 13}\sup_{|h|\leq \xi}\lVert D^h m\rVert_{L^{3p}(U)}$ is bounded. It follows that $m\in B^{\frac{1}{3}}_{3p,\infty}(U)$ for all $U\subset\subset \Omega$. 
\end{proof}

\begin{lem}
	\label{lem:I_alpha}
	Let $I_{\alpha}(x,h)$ be given in \eqref{cpeq18}. For domains $U\subset\subset\Omega'\subset\subset\Omega''\subset\subset\Omega$ and any $\gamma\in C^{\infty}_c(\Omega')$, the estimate \eqref{cpeq283} holds for all $|h|<\mathrm{dist}\{\Omega',\partial\Omega''\}$.
\end{lem}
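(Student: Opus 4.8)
The plan is to pivot everything through the auxiliary function
\[
\Psi(x,v):=\int_0^{2\pi}\varphi_\alpha(v-t)\,d\sigma_x(t),\qquad v\in\R/2\pi\Z,
\]
which inherits from $\varphi_\alpha$ both the bound $\lt|\Psi(x,v)\rt|\leq\|\varphi_\alpha\|_{C^0}\,\nu(x)$ (recall $\nu(x)=\|\sigma_x\|_{\mathcal{M}(\R/2\pi\Z)}$ from \eqref{eq:sigmaLp}) and, crucially, the H\"older continuity $\lt|\Psi(x,v_1)-\Psi(x,v_2)\rt|\lesssim\lt|v_1-v_2\rt|^\alpha\,\nu(x)$. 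The latter is available because, by \eqref{eqki2}, $\varphi_\alpha$ is odd, $\pi$-periodic and behaves like $\mathrm{sgn}(t)\lt|t\rt|^\alpha$ near each of its zeros, so $\varphi_\alpha\in C^{0,\alpha}(\R/2\pi\Z)$; it is moreover absolutely continuous with $\varphi'_\alpha\in L^1(\R/2\pi\Z)$, but $\varphi'_\alpha\notin L^\infty$. Writing $g(y,t):=\int_{\theta(y)-\pi/2}^{\theta(y)+\pi/2}\varphi'_\alpha(s-t)\sin(s)\,ds$, which is a \emph{bounded} function of $t$ with $\|g(y,\cdot)\|_{C^0}\leq\|\varphi'_\alpha\|_{L^1(\R/2\pi\Z)}$, equation \eqref{cpeq18} reads $I_\alpha(x,h)=-2\int_0^{2\pi}g(x,t)\,d\sigma_{x+he}(t)+2\int_0^{2\pi}g(x+he,t)\,d\sigma_x(t)$. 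First I would integrate by parts in $s$ inside $g$ (legitimate since $\varphi_\alpha$ is absolutely continuous) and use $\pi$-periodicity to get $g(y,t)=2\cos(\theta(y))\varphi_\alpha\lt(\theta(y)+\tfrac\pi2-t\rt)-\int_{\theta(y)-\pi/2}^{\theta(y)+\pi/2}\cos(s)\varphi_\alpha(s-t)\,ds$, hence, by Fubini,
\[
\int_0^{2\pi}g(y,t)\,d\sigma_x(t)=2\cos(\theta(y))\,\Psi\lt(x,\theta(y)+\tfrac\pi2\rt)-\int_{\theta(y)-\pi/2}^{\theta(y)+\pi/2}\cos(s)\,\Psi(x,s)\,ds.
\]
The point of this step is that the singular $\varphi'_\alpha$ is traded for the bounded, H\"older $\varphi_\alpha$: a crude bound $\lt|I_\alpha(x,h)\rt|\lesssim\|\varphi'_\alpha\|_{L^1}(\nu(x)+\nu(x+he))$ is available but far too weak, as it would only give $m\in B^{1/(3p)}_{3p,\infty,\loc}$.

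Next I would multiply $I_\alpha(x,h)$ by $\gamma(x)$, integrate over $\Omega$, and change variables in the first term so that its measure becomes $\sigma_x$; since $\gamma\in C^\infty_c(\Omega')$ and $\lt|h\rt|<\dist\{\Omega',\partial\Omega''\}$, all shifted points stay in $\Omega''$, and one is reduced to a finite sum of integrals over $x\in\Omega''$ of expressions $\gamma(x)F(x+he)-\gamma(x-he)F(x-he)$, with $F(y)\in\lt\{\cos(\theta(y))\Psi(x,\theta(y)+\tfrac\pi2),\ \int_{\theta(y)-\pi/2}^{\theta(y)+\pi/2}\cos(s)\Psi(x,s)\,ds\rt\}$ (so $\theta$ is sampled at the shifted point but $\Psi(x,\cdot)$ at the fixed base $x$, and $\lt|F(y)\rt|\lesssim\nu(x)$ in both cases). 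The algebraic identity $\gamma(x)F(x+he)-\gamma(x-he)F(x-he)=\gamma(x)\bigl(F(x+he)-F(x-he)\bigr)+\bigl(\gamma(x)-\gamma(x-he)\bigr)F(x-he)$ splits each term into a "$\na\gamma$" piece, bounded by $\lt|h\rt|\,\|\na\gamma\|_{L^\infty}\,\nu(x)$, and a difference piece $\gamma(x)\bigl(F(x+he)-F(x-he)\bigr)$. For the latter I would split once more, peeling off the difference coming from $\cos\theta$ (respectively from the endpoints of the arc $(\theta(\cdot)-\tfrac\pi2,\theta(\cdot)+\tfrac\pi2)$), controlled by $\lt|\theta(x+he)-\theta(x-he)\rt|$, from the difference coming from $\Psi(x,\cdot)$ evaluated at $\theta(x+he)+\tfrac\pi2$ versus $\theta(x-he)+\tfrac\pi2$, controlled by $\lt|\theta(x+he)-\theta(x-he)\rt|^\alpha\,\nu(x)$ thanks to the H\"older continuity of $\Psi(x,\cdot)$. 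Choosing representatives with angle difference in $[-\pi,\pi]$, the elementary estimate behind Lemma \ref{auxteclem1} gives $\lt|\theta(x+he)-\theta(x-he)\rt|\lesssim\lt|m(x+he)-m(x-he)\rt|\leq\lt|D^hm(x)\rt|+\lt|D^{-h}m(x)\rt|$, and since $\lt|D^{\pm h}m\rt|\leq2$ and $0<\alpha\leq1$, both this quantity and its $\alpha$-th power are $\lesssim\lt|D^hm(x)\rt|^\alpha+\lt|D^{-h}m(x)\rt|^\alpha$; hence the difference piece is $\lesssim\gamma(x)\bigl(\lt|D^hm(x)\rt|^\alpha+\lt|D^{-h}m(x)\rt|^\alpha\bigr)\nu(x)$.

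Summing the pieces gives $\lt|\int_\Omega\gamma(x)I_\alpha(x,h)\,dx\rt|\lesssim\int_{\Omega''}\bigl(\gamma(\lt|D^hm\rt|^\alpha+\lt|D^{-h}m\rt|^\alpha)+\lt|h\rt|\,\|\na\gamma\|_{L^\infty}\bigr)\nu\,dx$, and H\"older's inequality with exponents $p'$ and $p$ (using $\int_{\Omega''}\nu\leq\lt|\Omega''\rt|^{1/p'}\|\nu\|_{L^p(\Omega'')}$ to handle the $\lt|h\rt|$-term, and $\supp\gamma\subset\Omega'$ to rewrite the other norm over $\Omega$) yields \eqref{cpeq283}. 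The main obstacle, and essentially the only genuine departure from the interaction estimate of \cite{GL}, is the unboundedness of $\varphi'_\alpha$: it is precisely the integration by parts in $s$ together with the use of the sharp $C^{0,\alpha}$ modulus of continuity of $\varphi_\alpha$ (equivalently, of $\Psi(x,\cdot)$) that upgrades what would otherwise be a merely bounded contribution into the decisive gain of the factor $\lt|D^{\pm h}m\rt|^\alpha$ — exactly what makes the $B^{\frac13}_{3p,\infty}$ bootstrap in Proposition \ref{LL1} close.
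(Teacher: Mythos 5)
Your proposal is correct and follows essentially the same route as the paper: your $\Psi(x,v)$ is exactly the paper's $\WI_{\alpha}(x,s)$ from \eqref{cpeq14.3}, the integration by parts in $s$ reproducing \eqref{cpeq14.2}, the change of variables shifting the measure back to $\sigma_x$ at the cost of a $\|\na\gamma\|_{L^\infty}|h|$ error, the $C^{0,\alpha}$ modulus of $\WI_\alpha(x,\cdot)$ for the boundary terms, and the use of Lemma \ref{auxteclem1} together with $|\theta^h-\theta^{-h}|\lesssim|m^h-m^{-h}|$ and $0<\alpha\leq 1$ are all precisely the steps in the paper's decomposition into $\Xi_1$ and $\Xi_2$. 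No gaps.
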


To simplify notation, define 
\begin{equation}
	\label{cpeq14.3}
	\WI_{\alpha}(x,s):=\int_0^{2\pi} \varphi_{\alpha}(s-t) d\sigma_x(t).
\end{equation}
It is clear that
\begin{equation}
	\label{cpeq14.4}
	\|\WI_{\alpha}(x,\cdot)\|_{L^{\infty}(\mathbb{R}/ 2\pi\mathbb{Z})}\leq \|\varphi_{\alpha}\|_{C^0}\,\nu(x)\qd\text{ for a.e.  }x\in\Omega,
\end{equation}
where recall that $\nu(x)=\|\sigma_x\|_{\mathcal{M}(\mathbb{R}/ 2\pi \mathbb{Z})}\in L^p_{\loc}(\Omega)$. We first do some computations. 

\begin{lem}
	Under the assumptions of Lemma \ref{lem:I_alpha}, we have
	\begin{equation}
		\label{cpeq201.5}
		\int_{\Omega} I_{\alpha}(x,h) \gamma(x) dx=2\Xi_1(h)-4\Xi_2(h),
	\end{equation}
	where
	\begin{align}
		\label{cpeq172}
		\Xi_1(h)&=\int_{\Omega} \lt(\int_{\theta(x)-\frac{\pi}{2}}^{\theta(x)+\frac{\pi}{2}}  \WI_{\alpha}^h\lt(x,s\rt)\cos(s) \, ds -\int_{\theta^h(x)-\frac{\pi}{2}}^{\theta^h(x)+\frac{\pi}{2}}   \WI_{\alpha}\lt(x,s\rt)\cos(s) \, ds\rt)\gamma(x) \,dx
	\end{align}
	and
	\begin{align}
		\label{cpeq173}
		\Xi_2(h)&=\int_{\Omega} \lt(\sin\lt( \theta(x)+\frac{\pi}{2}\rt)\WI_{\alpha}^h\lt(x,\theta(x)+\frac{\pi}{2}\rt) \rt.\nn\\
		&\qd\qd\qd\qd -\lt. \sin\lt( \theta^h(x)+\frac{\pi}{2}\rt) \WI_{\alpha}\lt(x,\theta^h(x)+\frac{\pi}{2}\rt)  \rt) \gamma(x) \,dx.
	\end{align}
\end{lem}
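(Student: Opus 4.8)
The statement is a direct computation starting from the explicit formula \eqref{cpeq18} for $I_{\alpha}(x,h)$; the two ingredients are integration by parts in the variable $s$ and the $\pi$-periodicity of $\varphi_{\alpha}$ recorded in \eqref{eqki2}. The plan is as follows. Fix $h$ with $|h|<\mathrm{dist}\{\Omega',\partial\Omega''\}$ and treat the two terms of \eqref{cpeq18} separately. In each term, for $\sigma_x^h$- (resp.\ $\sigma_x$-) a.e.\ $t$, integrate by parts in $s$ over the interval $\lt[\theta(x)-\frac{\pi}{2},\theta(x)+\frac{\pi}{2}\rt]$ (resp.\ $\lt[\theta^h(x)-\frac{\pi}{2},\theta^h(x)+\frac{\pi}{2}\rt]$), using $\varphi_{\alpha}'(s-t)=\partial_s\varphi_{\alpha}(s-t)$, to obtain a boundary contribution plus the term $-\int\varphi_{\alpha}(s-t)\cos(s)\,ds$ over the same interval.

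For the boundary contribution, the identities $\sin\lt(\theta+\frac{\pi}{2}\rt)=\cos\theta=-\sin\lt(\theta-\frac{\pi}{2}\rt)$ show that it equals $\cos\theta\lt[\varphi_{\alpha}\lt(\theta+\frac{\pi}{2}-t\rt)+\varphi_{\alpha}\lt(\theta-\frac{\pi}{2}-t\rt)\rt]$; since $\theta-\frac{\pi}{2}-t=\lt(\theta+\frac{\pi}{2}-t\rt)-\pi$ and $\varphi_{\alpha}$ is $\pi$-periodic, the two summands coincide, so the boundary term reduces to $2\sin\lt(\theta+\frac{\pi}{2}\rt)\varphi_{\alpha}\lt(\theta+\frac{\pi}{2}-t\rt)$ (and similarly with $\theta$ replaced by $\theta^h$ in the second term of \eqref{cpeq18}). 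This collapse of the two endpoint contributions into a single one via periodicity is precisely what produces the one‑sided form of $\Xi_2$ in \eqref{cpeq173}, and bookkeeping it carefully is the only point in the argument that demands a little attention; everything else is routine.

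It then remains to integrate against $d\sigma_x^h(t)$ (resp.\ $d\sigma_x(t)$), interchanging this integration with the $s$-integral by Fubini — legitimate since $\varphi_{\alpha}\in C^0$ is bounded and $\sigma_x^h$, $\sigma_x$ are finite measures — and to recognize $\int_0^{2\pi}\varphi_{\alpha}(s-t)\,d\sigma_x^h(t)=\WI_{\alpha}^h(x,s)$ from \eqref{cpeq14.3}, evaluated at the running variable $s$ for the integral term and at $s=\theta(x)+\frac{\pi}{2}$ for the boundary term. Collecting the contributions of the two terms of \eqref{cpeq18} gives
\begin{align*}
I_{\alpha}(x,h)&=2\int_{\theta(x)-\frac{\pi}{2}}^{\theta(x)+\frac{\pi}{2}}\WI_{\alpha}^h(x,s)\cos(s)\,ds-2\int_{\theta^h(x)-\frac{\pi}{2}}^{\theta^h(x)+\frac{\pi}{2}}\WI_{\alpha}(x,s)\cos(s)\,ds\\
&\quad-4\sin\lt(\theta(x)+\tfrac{\pi}{2}\rt)\WI_{\alpha}^h\lt(x,\theta(x)+\tfrac{\pi}{2}\rt)+4\sin\lt(\theta^h(x)+\tfrac{\pi}{2}\rt)\WI_{\alpha}\lt(x,\theta^h(x)+\tfrac{\pi}{2}\rt).
\end{align*}
Multiplying by $\gamma(x)$ and integrating over $\Omega$ — all terms being integrable by \eqref{cpeq14.4}, the bound $\nu\in L^p_{\loc}(\Omega)$ and the fact that $\gamma\in C^{\infty}_c(\Omega')$ — the first line yields $2\Xi_1(h)$ and the second line yields $-4\Xi_2(h)$, which is exactly \eqref{cpeq201.5}. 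The measurability in $x$ of the maps $x\mapsto\WI_{\alpha}^h\lt(x,\theta(x)+\frac{\pi}{2}\rt)$ and the like follows from measurability of $\theta,\theta^h$ together with the weak measurability of $x\mapsto\sigma_x$; I expect no genuine obstacle here, only a routine verification.
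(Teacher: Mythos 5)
Your proposal is correct and follows essentially the same route as the paper: integrate by parts in $s$, use the $\pi$-periodicity of $\varphi_{\alpha}$ together with $\sin(\theta-\frac{\pi}{2})=-\sin(\theta+\frac{\pi}{2})$ to collapse the two endpoint contributions into one, swap the $s$- and $t$-integrations to recognize $\WI_{\alpha}$ and $\WI_{\alpha}^h$, and collect terms against $\gamma$. The intermediate identity you display for $I_{\alpha}(x,h)$ agrees exactly with what the paper obtains from its equations \eqref{cpeq14.2}, \eqref{cpeq12} and \eqref{cpeq13}.
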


\begin{proof}
	Using integration by parts and $\pi$-periodicity of $\varphi_{\alpha}$ gives
	\begin{align}
		\label{cpeq14.2}
		&\int_{\theta(x)-\frac{\pi}{2}}^{\theta(x)+\frac{\pi}{2}} \varphi'_{\alpha}(s-t)\sin(s) \, ds\nn\\
		&=2\varphi_{\alpha}\lt(\theta(x)+\frac{\pi}{2}-t\rt)\sin\lt( \theta(x)+\frac{\pi}{2}\rt) -\int_{\theta(x)-\frac{\pi}{2}}^{\theta(x)+\frac{\pi}{2}}   \varphi_{\alpha}(s-t)\cos(s) \, ds.
	\end{align}
	Thus
	\begin{align}
		\label{cpeq12}
		&\int_0^{2\pi}\lt(\int_{\theta(x)-\frac{\pi}{2}}
		^{\theta(x)+\frac{\pi}{2}} \varphi'_{\alpha}(s-t)\sin(s) \, ds\rt)  d\sigma^h_x(t)\nn\\
		&\overset{(\ref{cpeq14.3}),(\ref{cpeq14.2})}{=}2\sin\lt( \theta(x)+\frac{\pi}{2}\rt)\WI_{\alpha}^h\lt(x,\theta(x)+\frac{\pi}{2}\rt)-\int_{\theta(x)-\frac{\pi}{2}}^{\theta(x)+\frac{\pi}{2}}   \WI_{\alpha}^h(x,s)   \cos(s) \, ds.
	\end{align}
	And in the same way we have 
	\begin{align}
		\label{cpeq13}
		&\int_{0}^{2\pi}\lt(\int_{\theta^h(x)-\frac{\pi}{2}}
		^{\theta^h(x)+\frac{\pi}{2}} \varphi'_{\alpha}(s-t)\sin(s) \, ds\rt)  d\sigma_x(t)\nn\\
		&=2\sin\lt( \theta^h(x)+\frac{\pi}{2}\rt) \WI_{\alpha}\lt(x,\theta^h(x)+\frac{\pi}{2}\rt) -\int_{\theta^h(x)-\frac{\pi}{2}}^{\theta^h(x)+\frac{\pi}{2}}   \WI_{\alpha}(x,s)\cos(s) \, ds.
	\end{align}
	Putting \eqref{cpeq18}, (\ref{cpeq12}), (\ref{cpeq13}), (\ref{cpeq172}), (\ref{cpeq173}) together establishes (\ref{cpeq201.5}). 
\end{proof}

\begin{proof}[Proof of Lemma \ref{lem:I_alpha}]
	Assume $1<p\leq \frac{4}{3}$, and thus $0<\alpha=3p-3\leq 1$. We first show
	\begin{align}
		\label{cpeq201.4}
		\lt|\Xi_1(h)\rt|&\leq  C\lt(\lt\lVert\gamma\lt(\lt|D^h m\rt|^{\alpha}+\lt|D^{-h}m\rt|^{\alpha}\rt)\rt\rVert_{L^{p'}(\Omega)}+\lt\lVert\na \gamma\rt\rVert_{L^{\infty}(\Omega)}|h|\rt)\|\nu\|_{L^{p}(\Omega'')}.
	\end{align}
	A change of variable gives
	\begin{align}
		\label{cpeq221.4}
		&\int_{\Omega} \lt(\int_{\theta(x)-\frac{\pi}{2}}^{\theta(x)+\frac{\pi}{2}}   \WI_{\alpha}^h\lt(x,s\rt)\cos(s) \, ds   \rt)\gamma(x) \, dx\nn\\
		&\qd\qd=\int_{\Omega} \lt(\int_{\theta^{-h}(x)-\frac{\pi}{2}}^{\theta^{-h}(x)+\frac{\pi}{2}}   \WI_{\alpha}\lt(x,s\rt)\cos(s) \, ds   \rt)\gamma(x-he_1) \, dx,
	\end{align}
	and thus
	\begin{align}
		\label{cpeq221.45}
		&\lt|  \int_{\Omega} \lt(\int_{\theta(x)-\frac{\pi}{2}}^{\theta(x)+\frac{\pi}{2}}   \WI_{\alpha}^h\lt(x,s\rt)\cos(s) \, ds   \rt)\gamma(x) \, dx   \rt.\nn\\
		&\qd\qd\qd-\lt.  \int_{\Omega} \lt(\int_{\theta^{-h}(x)-\frac{\pi}{2}}^{\theta^{-h}(x)+\frac{\pi}{2}}   \WI_{\alpha}\lt(x,s\rt)\cos(s) \, ds   \rt)\gamma(x) \, dx   \rt|\nn\\
		&\qd\overset{(\ref{cpeq221.4})}{=}\lt|\int_{\Omega} \lt(\int_{\theta^{-h}(x)-\frac{\pi}{2}}^{\theta^{-h}(x)+\frac{\pi}{2}}  \WI_{\alpha}\lt(x,s\rt) \cos(s) \, ds\rt)\lt(\gamma(x-he_1)-\gamma(x) \rt) \, dx \rt|\nn\\
		&\qd\overset{(\ref{cpeq14.4})}{\lesssim} \|\na \gamma\|_{L^{\infty}(\Omega)}\|\nu\|_{L^{p}\lt(\Omega''\rt)}|h|.
	\end{align}
	Therefore, recalling the definition of $\chi$ in \eqref{eq:chi}, we have
	\begin{align} 
		\label{cpeq233}
		&\lt|\Xi_1(h)-\int_{\Omega} \lt(\int_0^{2\pi} \lt(\chi^{-h}(x,s)-\chi^{h}(x,s)\rt) \mathcal{W_{\alpha}}(x,s)\cos(s) ds\rt)  \gamma(x)\, dx \rt|\nn\\
		&\qd \overset{(\ref{cpeq172}), (\ref{cpeq221.45}) }{\lesssim} \|\na \gamma\|_{L^{\infty}(\Omega)}\|\nu\|_{L^{p}\lt(\Omega''\rt)}|h|.
	\end{align}
	As $0<\alpha\leq 1$, it follows from \eqref{eqaux1} and \eqref{cpeq14.4} that
	\begin{align} 
		\label{cpeq233.4}
		&\lt|\int_0^{2\pi} \lt(\chi^{-h}(x,s)-\chi^h(x,s)\rt) \mathcal{W_{\alpha}}(x,s)\cos(s) ds\rt|\nn\\
		&\qd\qd\lesssim\nu(x) \lt|m(x+he_1)-m(x-he_1)\rt|\nn\\
		&\qd\qd\leq \nu(x)\lt(\lt|D^h m(x)\rt|^\alpha+\lt|D^{-h} m(x)\rt|^{\alpha}\rt)\qd\text{ for }a.e.\ x\in\Omega'. 
	\end{align}
	 Putting (\ref{cpeq233}) and (\ref{cpeq233.4}) together gives (\ref{cpeq201.4}). 
	 
	 Next we show
	 \begin{equation}
	 	\label{cpeq269}
	 	\lt|\Xi_2(h)\rt|\leq  C\lt(\lt\lVert\gamma\lt(\lt|D^h m\rt|^{\alpha}+\lt|D^{-h}m\rt|^{\alpha}\rt)\rt\rVert_{L^{p'}(\Omega)}+\lt\lVert\na \gamma\rt\rVert_{L^{\infty}(\Omega)}|h|\rt)\|\nu\|_{L^{p}(\Omega'')}.
	 \end{equation}
	 In the same way as (\ref{cpeq221.45}), from (\ref{cpeq173}) we have
	 \begin{equation}
	 	\label{cpeq173.4}
	 	\lt|\Xi_2(h)-\wt\Xi_2(h)\rt|\lesssim \|\na \gamma\|_{L^{\infty}(\Omega)}\|\nu\|_{L^{p}\lt(\Omega''\rt)}|h|,
	 \end{equation}
	 where
	 \begin{align*}
	 	\wt\Xi_2(h)&:=\int_{\Omega} \lt(\sin\lt( \theta^{-h}(x)+\frac{\pi}{2}\rt)\WI_{\alpha}\lt(x,\theta^{-h}(x)+\frac{\pi}{2}\rt)\rt.\nn\\
	 	&\qd\qd\qd\qd -\lt. \sin\lt( \theta^h(x)+\frac{\pi}{2}\rt)  \WI_{\alpha}\lt(x,\theta^h(x)+\frac{\pi}{2}\rt) \rt) \gamma(x) \,dx.
	 \end{align*}
 It is easy to check that for $0<\alpha\leq 1$ we have  
	 \begin{equation*}
	 	\lt|\varphi_\alpha(t_1)- \varphi_\alpha(t_2)\rt|\leq C(\alpha)\lt|t_1- t_2\rt|^{\alpha}\qd\text{ for all }
	 	t_1, t_2\in \R.
	 \end{equation*}
 	Thus
	 	\begin{align}
	 		\label{cpeq14.6}
	 		\lt|\WI_{\alpha}\lt(x,s_1\rt)-\WI_{\alpha}\lt(x,s_2\rt)\rt|&\overset{(\ref{cpeq14.3})}{\leq} C(\alpha)\lt|s_1-s_2\rt|^{\alpha}\nu(x)\qd\text{ for all }s_1,s_2\in\R\text{ and a.e. }x\in\Omega.
	 \end{align}
	 It follows that for $0<\alpha\leq 1$ we have
	 \begin{align*}
	 	&\lt|\sin\lt(s_1\rt)\WI_{\alpha}(x,s_1)- \sin\lt(s_2\rt)\WI_{\alpha}(x,s_2)\rt|\nn\\
	 	&\qd\qd\leq \lt|\WI_{\alpha}(x,s_1)-\WI_{\alpha}(x,s_2)\rt|+ \lt|\WI_{\alpha}(x,s_1)\rt|\lt|\sin(s_1)-\sin(s_2)\rt|\nn\\
	 	&\qd\qd\overset{(\ref{cpeq14.6}), (\ref{cpeq14.4})}{\leq} C\lt|s_1-s_2\rt|^{\alpha} \nu(x)\qd\text{ for all bounded }s_1, s_2\text{ and a.e. }x\in\Omega.
	 \end{align*}
	 By $2\pi$-periodicity of $\sin(\cdot)\WI_{\alpha}(x,\cdot)$, we may assume without loss of generality that $|\theta^h(x)-\theta^{-h}(x)|\leq \pi$. It follows from the above estimate and \eqref{eqaux11} that 
	 \begin{align}
	 	\label{cpeq264}
	 	\lt|\wt{\Xi}_2(h)\rt|&\leq  C\int_{\Omega} \nu(x) \lt|m(x-he_1)-m(x+he_1)\rt|^{\alpha}\gamma(x) \,dx\nn\\
	 	&\leq C\int_{\Omega}\nu(x)\lt(\lt|D^{-h}m(x)\rt|^\alpha+\lt|D^h m(x)\rt|^\alpha\rt)\gamma(x)\,dx.
	 \end{align}
	 Putting (\ref{cpeq264}) together with (\ref{cpeq173.4}) establishes (\ref{cpeq269}). Finally, putting \eqref{cpeq201.4} and \eqref{cpeq269} together with \eqref{cpeq201.5} establishes \eqref{cpeq283}.
\end{proof}

\begin{lem}
	\label{lem:A_alpha}
	Let $A_{\alpha}(x,h)$ be given in \eqref{eq:A_alpha}. For domains $U\subset\subset\Omega'\subset\subset\Omega''\subset\subset\Omega$ and any $\gamma\in C^{\infty}_c(\Omega')$, the estimate \eqref{estA_alpha} holds for all $|h|<\mathrm{dist}\{\Omega',\partial\Omega''\}$.
\end{lem}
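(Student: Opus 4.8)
\emph{Proof plan.}
The plan is to reduce the whole statement to the pointwise bound $|A_\alpha(x,h)|\lesssim|D^h m(x)|$ for a.e.\ $x\in\Omega'$. Granting this, since $\nabla\gamma$ is supported in $\Omega'$ and the restriction $|h|<\mathrm{dist}\{\Omega',\partial\Omega''\}$ guarantees $x,x+he_1\in\Omega''\subset\subset\Omega$ for every $x\in\Omega'$---so that $D^h m(x)=m(x+he_1)-m(x)$ and $\chi^h(x,\cdot)=\chi(x+he_1,\cdot)$ are genuine translates---the estimate \eqref{estA_alpha} follows at once from $\left|\int_\Omega A_\alpha(x,h)\cdot\nabla\gamma\,dx\right|\le\int_\Omega|A_\alpha(x,h)|\,|\nabla\gamma|\,dx$. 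The only tool needed for the pointwise bound is estimate \eqref{eqaux1} of Lemma \ref{auxteclem1}, applied with $x_0=x+he_1$ and $x_1=x$: for any $q\in L^\infty(\mathbb{R}/2\pi\mathbb{Z})$, $\left|\int_0^{2\pi}D^h\chi(x,t)\,q(t)\,dt\right|\lesssim\|q\|_{L^\infty}|D^h m(x)|$. All implicit constants below may depend on $\alpha$ through $\|\varphi_\alpha\|_{C^0}<\infty$.

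For $A_{\alpha,1}$ the difference structure is already present. I would fix $s$ and view the inner $t$-integral $\int_0^{2\pi}\varphi_\alpha(s-t)\cos(t)\,D^h\chi(x,t)\,dt$ as $\int_0^{2\pi}D^h\chi(x,t)\,q(t)\,dt$ with $q(t)=\varphi_\alpha(s-t)\cos t$ and $\|q\|_{L^\infty}\le\|\varphi_\alpha\|_{C^0}$; then \eqref{eqaux1} bounds it by $C\|\varphi_\alpha\|_{C^0}|D^h m(x)|$. Multiplying by $\sin(s)\chi^h(x,s)$ and integrating in $s$, using $|\chi^h|\le 1$ and $\int_0^{2\pi}|\sin s|\,ds=4$, yields $|A_{\alpha,1}(x,h)|\lesssim|D^h m(x)|$.

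The term $A_{\alpha,2}$ is a product $\chi(x,s)\chi^h(x,t)$ with no apparent difference, and the one genuinely non-routine step is to manufacture one by a symmetry argument. Setting $K(s,t):=\varphi_\alpha(s-t)\sin(s)\sin(t)$, the oddness of $\varphi_\alpha$ together with the symmetry of $\sin(s)\sin(t)$ gives $K(t,s)=-K(s,t)$, and therefore $\iint_{[0,2\pi]\times[0,2\pi]}K(s,t)\,\chi(x,s)\chi(x,t)\,ds\,dt=0$, since after the substitution $s\leftrightarrow t$ this integral equals its own negative. Subtracting this vanishing term gives $A_{\alpha,2}(x,h)=2\iint_{[0,2\pi]\times[0,2\pi]}K(s,t)\,\chi(x,s)\,D^h\chi(x,t)\,ds\,dt$, which has exactly the shape of $A_{\alpha,1}$ (with $\cos t$ replaced by $\sin t$ and $\chi^h(x,s)$ by $\chi(x,s)$), so the argument of the previous paragraph applies verbatim and gives $|A_{\alpha,2}(x,h)|\lesssim|D^h m(x)|$. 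Adding the two bounds gives $|A_\alpha(x,h)|\lesssim|D^h m(x)|$ for a.e.\ $x$, which completes the proof; note that this lemma only feeds the upper bound in \eqref{eqokl105}, so no finer dependence (on $\nu$, or on powers of $|D^h m|$) is needed here. The main obstacle is precisely the antisymmetrization of $A_{\alpha,2}$; everything else is a direct application of Lemma \ref{auxteclem1} together with $\|\varphi_\alpha\|_{C^0}<\infty$, $|\chi|\le 1$, and $\int_0^{2\pi}|\sin|=4$.
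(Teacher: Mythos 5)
Your proposal is correct and follows essentially the same route as the paper: the antisymmetrization of $A_{\alpha,2}$ via the oddness of $\varphi_\alpha$ and the symmetry of $\sin(s)\sin(t)$, followed by an application of \eqref{eqaux1} to the $t$-integral against $D^h\chi(x,\cdot)$, is exactly the paper's argument. The only cosmetic difference is that you apply Lemma \ref{auxteclem1} for each fixed $s$ and then integrate in $s$, whereas the paper first integrates in $s$ to produce a single bounded $2\pi$-periodic function $f^{x,h}(t)$ and applies the lemma once; the two are interchangeable by Fubini.
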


\begin{proof}
	First we need to change $A_{\alpha,2}$ into a more convenient form to get estimates.  Since $\varphi_{\alpha}$ is odd, we have
	\begin{align*}
		&\iint\varphi_{\alpha}(s-t)\sin(s) \sin(t) \,\chi(x,s) \chi(x,t) \, dt\, ds\nn\\
		&\qd\qd\overset{\ti{s}=t, \ti{t}=s}{=}\iint\varphi_{\alpha}\lt(\ti{t}-\ti{s}\rt)\sin\lt(\ti{s}\rt) \sin\lt(\ti{t}\rt) \,\chi(x,\ti{s}) \chi(x,\ti{t}) \, d\ti{t}\, 
		d\ti{s}\nn\\
		&\qd\qd =-\iint\varphi_{\alpha}\lt(\ti{s}-\ti{t}\rt)\sin\lt(\ti{s}\rt) \sin\lt(\ti{t}\rt) \,\chi(x,\ti{s}) \chi(x,\ti{t}) \, d\ti{t}\, 
		d\ti{s},
	\end{align*}
	and thus $\iint\varphi_{\alpha}(s-t)\sin(s) \sin(t) \,\chi(x,s) \chi(x,t) \, dt\, ds=0$. So we have
	\begin{align*}
		A_{\alpha,2}(x,h)\overset{(\ref{eq:A_alpha})}{=}2 \iint\varphi_{\alpha}(s-t)\sin(s) \sin(t) \,\chi(x,s) D^h\chi(x,t) \, dt\, ds.
	\end{align*}
	By $\pi$-periodicity of $\varphi_{\alpha}$, it is easy to check that
	$f^{x,h}(t):=\int_{\theta^h(x)-\frac{\pi}{2}}^{\theta^h(x)+\frac{\pi}{2}} \varphi_{\alpha}(s-t)\sin(s)\cos(t)\, ds$ is $2\pi$-periodic with zero average on $[0,2\pi]$.
	Further it is clear that  $\|f^{x,h}\|_{L^{\infty}(\R/2\pi\mathbb{Z})}\lesssim \|\varphi_{\alpha}\|_{C^0}$. Using Lemma 
	\ref{auxteclem1} we deduce that
	\begin{align*}
		\lt|A_{\alpha,1}(x,h)\rt|\overset{(\ref{eq:A_alpha})}{=}2\lt|\int_0^{2\pi} f^{x,h}(t) D^h \chi(x,t)\, dt\rt|\overset{(\ref{eqaux1})}{\leq} C\lt|D^hm(x)\rt|\qd\text{ for a.e. }x\in\Omega'.
	\end{align*}
	And in the same way we have
	\begin{equation*}
		\lt|A_{\alpha,2}(x,h)\rt|\leq C\lt|D^hm(x)\rt|\qd\text{ for a.e. }x\in\Omega'.
	\end{equation*}
	This establishes \eqref{estA_alpha}.
\end{proof}

%
%

%
%

%
%

\section{Proof of Theorem \ref{C1}}

%

%
%

%

%
%
%

%
%
%

In this section we give the proof of Theorem \ref{C1}. In Subsection \ref{sec:main} we first compute the entropy production $\dv\Phi(m)$ for a wide class of $\Phi\in ENT$ in terms of the two special entropy productions $\dv\Sigma_j(m)$; see \eqref{eqmma5} of Proposition \ref{L17} below. This constitutes the major step of the proof. Then in Subsection \ref{sec:harmext}, using the specific structure of the entropies $\Phi_f$, we realize that the general formula \eqref{eqmma5} turns into the more specific one \eqref{cpeq2} for this class of entropies, which in turn gives the explicit characterization of the kinetic measure $\sigma$ as in \eqref{abeqa1.5}.

\subsection{Factorization for general entropies}
\label{sec:main}

In this subsection, we establish in Proposition \ref{L17} a more general version of the formula \eqref{cpeq2} for sufficiently regular entropies $\Phi\in ENT$.  Let $E:L^2(\mathbb{S}^1)\rightarrow L^2(B_1)$ be the continuous linear operator uniquely determined by its action on Fourier modes
\begin{equation}
	\label{eqhex1}
	E\psi_k(r e^{i\theta})=r^{\lt|k\rt|}e^{i k\theta}\qd\text{ for }\psi_k\lt(e^{i\theta}\rt)=e^{ik\theta}, k\in \mathbb{Z}.
\end{equation}
\begin{rem}
		\label{teclem}
		The operator $E$ is a continuous operator from $ C^4(\mathbb{S}^1)$ to  $C^3(\overline B_1)$. This is a well known fact and is also shown at the beginning of the proof of \cite[Lemma 16]{llp}. We include an argument for the convenience of the reader. From (\ref{eqhex1}) we have that $\|E\psi_k\|_{C^3(\overline B_1)}\lesssim 1+|k|^3$. Given
		$\psi=\sum_{k}c_k(\psi)\psi_k \in C^4(\mathbb{S}^1)$,  by Parseval's identity, we have $\sum_{k}\lt(k^4|c_k(\psi)|\rt)^2=\sum_k\lt|c_k\lt(\psi^{(4)}\rt)\rt|^2 = \frac{1}{2\pi}\lVert\psi^{(4)}\rVert_{L^2(\mathbb{S}^1)}^2\leq \lVert\psi^{(4)}\rVert_{C^0(\mathbb{S}^1)}^2$. It follows that
		\begin{equation*}
			\sum_{|k|\geq 1} \lt|c_k(\psi)\rt|\lt\lVert E\psi_k\rt\rVert_{C^3(\overline B_1)}\lesssim\sum_{|k|\geq 1} k^4 \lt|c_k(\psi)\rt|\cdot |k|^{-1}\leq C\lt\lVert\psi^{(4)}\rt\rVert_{C^0(\mathbb{S}^1)},
		\end{equation*}
		from which we deduce $\lt\lVert E\psi\rt\rVert_{C^3(\overline B_1)}\leq C\lVert\psi\rVert_{C^4(\mathbb{S}^1)}$.
	\end{rem}

As the linear span of $\{\psi_k\}_{k\in\mathbb{Z}}$ is dense in $C^4(\mathbb{S}^1)$ and $\Delta(E\psi_k)=0$ for all $k\in \mathbb{Z}$, it follows from 
	Remark \ref{teclem} that
\begin{equation*}
	\Delta(E\psi)=0\qd\text{ in }B_1\text{ for all }\psi\in C^4(\mathbb{S}^1).
\end{equation*}
In the following we refer to $E$ as the \emph{harmonic extension operator}.
\begin{prop}
	\label{prop:decomposition}
	Let $m$ satisfy \eqref{ageq3}. Assume  \eqref{cpeq1.23} holds true for some sequence $\ep_k\rightarrow 0$ and $1\leq p<\infty$. Given any $\psi\in C^4(\mathbb{S}^1)$ and $\varphi=E\psi$ its harmonic extension to $\overline{B}_1$, the harmonic entropy given by 
	\begin{equation}
		\label{harment}
		\Phi^{E\psi}(z)=\Phi^{\varphi}(z)=\varphi(z)z+\lt(iz\cdot\na\varphi(z)\rt)iz\qd\text{ for all } z\in\overline{B}_1
	\end{equation}
	satisfies $\dv\Phi^{E\psi}(m)\in L^{\blue{p}}_{\loc}(\Omega)$. Further
	it explicitly holds
	\begin{equation}
		\label{Entdecom}
		\dv\Phi^{E\psi}(m)=\AI_1\psi(m) e^{i2\theta}\cdot\dv\Sigma(m)+ \AI_2\psi(m) i e^{i2\theta}\cdot\dv\Sigma(m) \qd\text{ a.e. in }\Omega,
	\end{equation}
	where $\theta:\Omega\to[0,2\pi)$ satisfies $m(x)=e^{i\theta(x)}$ a.e. in $\Omega$, and $\AI_1, \AI_2: C^4(\mathbb{S}^1)\to C^0(\mathbb{S}^1)$ are the Fourier multiplier operators given by 
	\begin{equation*}
		\AI_1\psi_k=\frac{i k}{2} (k^2-1)\psi_k
	\end{equation*}
	and 
	\begin{equation*}
		\AI_2\psi_k=-\frac{\lt|k\rt|}{2}(k^2-1)\psi_k
	\end{equation*}
	for $\psi_k(e^{i\theta})=e^{ik\theta}$ for all $k\in\mathbb{Z}$.
\end{prop}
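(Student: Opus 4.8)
\emph{Plan.} I would prove this in three stages: (i) $L^p_{\loc}$-regularity of $\dv\Phi^{E\psi}(m)$ together with a quantitative bound; (ii) a reduction of the factorization identity \eqref{Entdecom} to single Fourier modes $\psi=\psi_k$; (iii) the modewise identity via regularization and a limit argument. For stage (i): by Remark~\ref{teclem}, $E$ is bounded from $C^4(\mathbb S^1)$ to $C^3(\overline B_1)$, so $\varphi=E\psi\in C^3(\overline B_1)$ and $\Phi^{E\psi}$ defined by \eqref{harment} is $C^2$ on $\overline B_1$ with $\|\Phi^{E\psi}\rest_{\mathbb S^1}\|_{C^2(\mathbb S^1)}\lesssim\|\psi\|_{C^4(\mathbb S^1)}$. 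Moreover $\Phi^{E\psi}\rest_{\mathbb S^1}\in ENT$: writing $\psi(\theta)=\varphi(e^{i\theta})$, \eqref{harment} gives $\Phi^\varphi(e^{i\theta})=e^{i\theta}\big(\psi(\theta)+i\psi'(\theta)\big)$, hence $\tfrac{d}{dt}\Phi^\varphi(e^{it})=ie^{it}\big(\psi(t)+\psi''(t)\big)$ and $e^{it}\cdot\tfrac{d}{dt}\Phi^\varphi(e^{it})=0$ (for complex $\psi$, argue on real and imaginary parts). Therefore Lemma~\ref{p:controlent} applies under the hypothesis \eqref{cpeq1.23} and yields both $\dv\Phi^{E\psi}(m)\in L^p_{\loc}(\Omega)$ and $\|\dv\Phi^{E\psi}(m)\|_{L^p(U)}\lesssim\|\psi\|_{C^4(\mathbb S^1)}\,\|\PPI_m\|_{L^p(U)}$ for every $U\subset\subset\Omega$.

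\emph{Stage (ii).} Both sides of \eqref{Entdecom} are linear in $\psi$, and by stage (i) the left side is continuous from $C^4(\mathbb S^1)$ into $L^p_{\loc}(\Omega)$. On the right, the estimate $\sum_k|k|^3|c_k(\psi)|\lesssim\|\psi^{(4)}\|_{C^0(\mathbb S^1)}$ from Remark~\ref{teclem} shows $\AI_1,\AI_2$ are bounded from $C^4(\mathbb S^1)$ into $C^0(\mathbb S^1)$, while $e^{i2\theta}\cdot\dv\Sigma(m)$ and $ie^{i2\theta}\cdot\dv\Sigma(m)$ belong to $L^p_{\loc}(\Omega)$ by Remark~\ref{R13}; so the right side is also continuous in $\psi\in C^4(\mathbb S^1)$ with values in $L^p_{\loc}(\Omega)$. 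Since trigonometric polynomials are dense in $C^4(\mathbb S^1)$, it suffices to prove \eqref{Entdecom} for $\psi=\psi_k$. Using $\psi_k(m)=m^k$ and $R+iI=e^{-i2\theta}\big(\dv\Sigma_1(m)+i\dv\Sigma_2(m)\big)$ where $R:=e^{i2\theta}\cdot\dv\Sigma(m)$, $I:=ie^{i2\theta}\cdot\dv\Sigma(m)$, the claim for mode $k$ is equivalent to
\begin{equation*}
\dv\Phi^{E\psi_k}(m)=\frac{ik(k^2-1)}{2}\,m^{\,k-2}\big(\dv\Sigma_1(m)+i\dv\Sigma_2(m)\big)\qquad(k\ge 2),
\end{equation*}
its complex conjugate for $k\le-2$, and $\dv\Phi^{E\psi_k}(m)=0$ for $|k|\le1$.

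\emph{Stage (iii).} Fix $|k|\ge2$. From \eqref{harment} one computes the closed forms $\Phi^{E\psi_k}(z)=(1-k)z^{k+1}$ for $k\ge0$ and $\Phi^{E\psi_k}(z)=(1-k)z\bar z^{|k|}$ for $k<0$. Let $m_\e=m*\rho_\e$; since $\dv m=0$ exactly, $\dv m_\e=0$, i.e.\ $(\partial_1-i\partial_2)m_\e=i\,\mathrm{curl}\,m_\e$ in $\mathbb C\cong\R^2$. Feeding this into the chain rule (in the spirit of Steps~2--3 of \cite[Proposition~3]{DeI} and of \cite{llp}) one computes $\dv[\Phi^{E\psi_k}(m_\e)]$ and, likewise, $\dv\Sigma_1(m_\e)$, $\dv\Sigma_2(m_\e)$ in terms of $\mathrm{curl}\,m_\e$, the anti-holomorphic derivative of $m_\e$ and powers of $m_\e,\bar m_\e$; using in addition the polynomial identities $\Sigma_j(z)=\Sigma_j^{\mathrm h}(z)+(1-|z|^2)g_j(z)$ with low-degree $\Sigma_j^{\mathrm h}$ (one checks $\Sigma_1^{\mathrm h}(z)=\tfrac i2\bar z+\tfrac i6 z^3$, $\Sigma_2^{\mathrm h}(z)=-\tfrac12\bar z+\tfrac16 z^3$) and polynomial $g_j$, a comparison of terms yields a pointwise identity
\begin{equation*}
\dv\big[\Phi^{E\psi_k}(m_\e)\big]=\AI_1\psi_k(m_\e)\,e^{i2\theta_\e}\cdot\dv\Sigma(m_\e)+\AI_2\psi_k(m_\e)\,ie^{i2\theta_\e}\cdot\dv\Sigma(m_\e)+\mathcal E_\e,
\end{equation*}
where $\theta_\e=\arg m_\e$ and $\mathcal E_\e$ — after the cancellations that the harmonic structure provides — is a sum of terms dominated by $|1-|m_\e|^2|$ times bounded functions of $m_\e$ and commutator-type terms dominated by $|\na m_\e|\,|1-|m_\e|^2|\lesssim\PPI^\e_m$ (by \cite[Lemma~9]{llp}). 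Passing to the limit along the subsequence $\e=\e_k\to0$ of \eqref{cpeq1.23}: the left side $\to\dv\Phi^{E\psi_k}(m)$ in $\mathcal D'(\Omega)$ (equal to the $L^p_{\loc}$ function of stage (i)), since $\Phi^{E\psi_k}(m_\e)\to\Phi^{E\psi_k}(m)$ in every $L^q_{\loc}$; on the right, $\AI_j\psi_k(m_\e)$ and $e^{i2\theta_\e}$ converge boundedly and a.e.\ to $\AI_j\psi_k(m)$ and $e^{i2\theta}$, and combined with the weak-$L^p_{\loc}$ compactness of $\{\dv\Sigma_j(m_{\e_k})\}$ (from the bound of Lemma~\ref{p:controlent} when $p>1$, and a dedicated argument in the style of Lemma~\ref{lembite} when $p=1$) the right side converges to that of \eqref{Entdecom}; finally $\mathcal E_\e\to0$, its first part by dominated convergence and its commutator part as in the proof of Lemma~\ref{p:controlent} via $\PPI^{\e_k}_m\rightharpoonup\PPI_m$. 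This proves \eqref{Entdecom} for $\psi=\psi_k$, and hence in general by stage (ii).

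\emph{Main obstacle.} The substantive part is stage (iii): one must organize the chain-rule computation so that every genuinely singular contribution — namely those carrying $\mathrm{curl}\,m_\e$ (or the anti-holomorphic derivative of $m_\e$) without a compensating factor $1-|m_\e|^2$ — cancels exactly between the two sides, which is precisely where the harmonic extension $E$ and the explicit structure of $\Sigma_1,\Sigma_2$ are used; and then one must make the limit passage fully rigorous, in particular in the case $p=1$, where the relevant sequences are only bounded in spaces of measures and an additional approximation step (Lemma~\ref{lembite}) is required to identify the distributional limits.
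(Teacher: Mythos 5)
Your stages (i) and (ii) are correct and essentially match the paper's scaffolding: the $L^p_{\loc}$ regularity and the quantitative bound via Lemma \ref{p:controlent}, the verification that $\Phi^{E\psi}\rest_{\mathbb S^1}\in ENT$, the continuity of both sides of \eqref{Entdecom} in $\psi\in C^4(\mathbb S^1)$, and the consequent reduction to Fourier modes are all sound (your closed forms $\Phi^{E\psi_k}(z)=(1-k)z^{k+1}$ and the complex rewriting of the mode-$k$ identity check out, as do $\Sigma_1^{\mathrm h},\Sigma_2^{\mathrm h}$). The paper proceeds in the opposite order -- it first proves the factorization \eqref{ep112} for an arbitrary harmonic $\varphi\in C^3(\overline B_1)$ with explicit coefficients $A_1^\varphi,A_2^\varphi$ built from third derivatives of $\varphi$ (Lemma \ref{lemp5}), and only afterwards identifies these coefficients with the multipliers $\AI_1,\AI_2$ by testing on modes (Lemma \ref{fmlem}) -- but that reordering is cosmetic.

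The genuine gap is in stage (iii), which is where the entire content of the proposition lives. The sentence ``a comparison of terms yields a pointwise identity \dots where $\mathcal E_\e$ --- after the cancellations that the harmonic structure provides --- is a sum of terms dominated by \dots'' states what must be shown; it does not derive it. In the paper this step is supplied by the identity \eqref{eqfact1} from \cite[Lemma 18]{llp},
\begin{equation*}
\dv\Phi^{\varphi}(m_\ep)=\dv\lt(\lt(|m_\ep|^2-1\rt)B(m_\ep)\rt)+\partial_2 B_1(m_\ep)\,\dv\Sigma_1(m_\ep)-\partial_1 B_1(m_{\ep})\,\dv\Sigma_2(m_\ep),
\end{equation*}
with an explicit vector field $B$ built from $\na\varphi$ and $\na^2\varphi$; harmonicity of $\varphi$ is then used to simplify $\partial_1B_1,\partial_2B_1$ into $Q_1^\varphi,Q_2^\varphi$, and a rotation by $2\theta$ produces the coefficients of $e^{i2\theta}\cdot\dv\Sigma(m)$ and $ie^{i2\theta}\cdot\dv\Sigma(m)$. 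Nothing in your write-up substitutes for this: you would need either to cite/reprove that identity, or to carry out the mode-by-mode computation with $(1-k)m_\e^{k+1}$ in full, verifying that every term carrying $\mathrm{curl}\,m_\e$ without a compensating factor $1-|m_\e|^2$ cancels against the corresponding terms of $\dv\Sigma_j(m_\e)$ and that the surviving coefficient is exactly $\tfrac{ik(k^2-1)}{2}m_\e^{k-2}$. Until that is done, the factorization \eqref{Entdecom} -- and in particular the precise values of the multipliers, which are what make Proposition \ref{L17} work -- remains unproved. By contrast, your handling of the limit passage (dominated convergence for the $(1-|m_\e|^2)$-terms, the commutator bound by $\PPI^\e_m$, and the Egorov-type argument of Lemma \ref{lembite} for the products $\MI(m_{\e_k})\dv\Sigma_j(m_{\e_k})$, needed in particular when $p=1$) is exactly what the paper does and is correct.
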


\begin{rem}
	The Fourier multiplier operators $\AI_1, \AI_2$ are defined for complex-valued functions on $\mathbb{S}^1$, but in \eqref{Entdecom} we only use their restrictions to real-valued functions. In the sequel we often implicitly make the identification $\R^2\cong \mathbb{C}$. 
\end{rem}

Proposition \ref{prop:decomposition} follows directly from the following Lemmas \ref{lemp5} and \ref{fmlem}. First we need a technical lemma.

\begin{lem} 
	\label{lembite} 
	Let $m$ satisfy \eqref{ageq3} and denote by $m_{\ep}=m\ast\rho_{\ep}$. Assume  \eqref{cpeq1.23} holds true for some sequence $\ep_k\rightarrow 0$ and $1\leq p<\infty$. Then for any $\MI\in C^0\lt(\overline B_1\rt)$, upon extraction of a subsequence, we have
	\begin{equation}
		\label{eqfv30}
		\lim_{k\to \infty}\int_{\Omega}\MI\lt(m_{\ep_k}\rt)\dv \Sigma_j\lt(m_{\ep_k}\rt)\zeta\,dx=\int_{\Omega}\MI\lt(m\rt)\dv \Sigma_j\lt(m\rt)\zeta\,dx \qd\text{ for all }\zeta\in C^\infty_c(\Omega),  j=1,2,
	\end{equation}
	where $\Sigma_1$ and $\Sigma_2$ are given in \eqref{ep101} and \eqref{ep102}, respectively. 
\end{lem}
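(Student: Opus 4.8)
The plan is to split $\dv\Sigma_j(m_{\ep_k})$ into a part that is bounded in $L^p_{\loc}(\Omega)$, and hence converges weakly along a subsequence, and a part which is the distributional divergence of an $L^q_{\loc}$-null sequence, and then to pass to the limit term by term after testing against $\MI(m_{\ep_k})\zeta$. I first record the standing facts. Since $|m|=1$ a.e., $m_{\ep_k}\to m$ a.e.\ and, being uniformly bounded, in $L^q_{\loc}(\Omega)$ for every $q<\infty$; hence $w_{\ep_k}:=1-|m_{\ep_k}|^2\in[0,1]$ satisfies $w_{\ep_k}\to 0$ in $L^q_{\loc}(\Omega)$ for every $q<\infty$, and $\dv m_{\ep_k}=0$. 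Also $\MI(m_{\ep_k})\to\MI(m)$ in every $L^q_{\loc}(\Omega)$, $q<\infty$, by continuity and dominated convergence; $\dv\Sigma_j(m)\in L^p_{\loc}(\Omega)$ by Remark \ref{R13}; $\{\PPI_m^{\ep_k}\}$ is bounded in $L^p_{\loc}(\Omega)$ by Lemma \ref{lemp6} (or \eqref{cpeq1.23}); $|\nabla m_{\ep_k}|\,w_{\ep_k}\lesssim\PPI_m^{\ep_k}$ pointwise by \cite[Lemma~9]{llp}; and $m\in B^{\frac13}_{3,\infty,\loc}(\Omega)$ by \cite[Theorem~2.6]{GL}, which combined with the commutator identity $w_{\ep_k}(x)=\tfrac12\iint|m(x-y)-m(x-z)|^2\rho_{\ep_k}(y)\rho_{\ep_k}(z)\,dy\,dz$ gives $\|w_{\ep_k}\|_{L^1(K)}\lesssim\ep_k^{2/3}$ for every $K\subset\subset\Omega$.

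Realising $\Sigma_j|_{\mathbb S^1}$ as the trace of a polynomial entropy $\Phi^{\varphi_j}$ (with $\varphi_j$ of degree two) and running the commutator computation of Steps~2--3 of the proof of \cite[Proposition~3]{DeI} (cf.\ \eqref{eqllkk1}) with $\dv m_{\ep_k}=0$, one obtains
\begin{equation*}
\dv\Sigma_j(m_{\ep_k})=F_{\ep_k}^j+\dv G_{\ep_k}^j,\qquad F_{\ep_k}^j:=-w_{\ep_k}\,\dv\bigl[\Psi^{\varphi_j}(m_{\ep_k})\bigr],\quad G_{\ep_k}^j:=w_{\ep_k}\bigl(\Psi^{\varphi_j}(m_{\ep_k})+Q_j(m_{\ep_k})\bigr),
\end{equation*}
where $\Psi^{\varphi_j}$ is the polynomial vector field attached to $\varphi_j$ and $Q_j:=(\Sigma_j-\Phi^{\varphi_j})/(1-|\cdot|^2)$ is polynomial because $\Sigma_j-\Phi^{\varphi_j}$ vanishes on $\mathbb S^1$. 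Then $|F_{\ep_k}^j|\lesssim_j w_{\ep_k}|\nabla m_{\ep_k}|\lesssim\PPI_m^{\ep_k}$, so $\{F_{\ep_k}^j\}$ is bounded in $L^p_{\loc}(\Omega)$, while $G_{\ep_k}^j\to 0$ in $L^q_{\loc}(\Omega)$ for every $q<\infty$ since its polynomial factor is bounded on $\overline B_1$. Extracting a further subsequence, $F_{\ep_k}^j\rightharpoonup F^j$ in $L^p_{\loc}(\Omega)$; letting $k\to\infty$ in the displayed identity in $\mathcal D'(\Omega)$ and using $\Sigma_j(m_{\ep_k})\to\Sigma_j(m)$ and $G_{\ep_k}^j\to 0$ in $L^q_{\loc}$, we identify $F^j=\dv\Sigma_j(m)$, i.e.\ $F_{\ep_k}^j\rightharpoonup\dv\Sigma_j(m)$ in $L^p_{\loc}(\Omega)$.

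Now fix $\zeta\in C^\infty_c(\Omega)$ and write $\int_\Omega\MI(m_{\ep_k})\dv\Sigma_j(m_{\ep_k})\zeta=\int_\Omega\MI(m_{\ep_k})\zeta\,F_{\ep_k}^j+\int_\Omega\MI(m_{\ep_k})\zeta\,\dv G_{\ep_k}^j$. In the first integral $\MI(m_{\ep_k})\zeta\to\MI(m)\zeta$ strongly in $L^{p'}_{\loc}(\Omega)$ while $F_{\ep_k}^j\rightharpoonup\dv\Sigma_j(m)$ weakly in $L^p_{\loc}(\Omega)$, so it converges to $\int_\Omega\MI(m)\dv\Sigma_j(m)\zeta$, which is the desired limit. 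Everything thus reduces to proving $\int_\Omega\MI(m_{\ep_k})\zeta\,\dv G_{\ep_k}^j\to 0$. For this I would approximate: mollify $\MI(m_{\ep_k})$ in $x$ at a fixed scale $\delta$, integrate by parts on $[\MI(m_{\ep_k})]_\delta$ (paying $\|\nabla[\MI(m_{\ep_k})]_\delta\|_{L^\infty}\lesssim\delta^{-1}\|\MI\|_{C^0}$ against $\|G_{\ep_k}^j\|_{L^1(K)}\lesssim\ep_k^{2/3}\to 0$, so that $k\to\infty$ is taken before $\delta\to 0$), and then, writing $\dv G_{\ep_k}^j=\dv\Sigma_j(m_{\ep_k})-F_{\ep_k}^j$, handle the remainder $\int_\Omega(\MI(m_{\ep_k})-[\MI(m_{\ep_k})]_\delta)\zeta\,\dv G_{\ep_k}^j$ using the boundedness of $\{F_{\ep_k}^j\}$ in $L^p_{\loc}$, the fact that $\MI(m_{\ep_k})-[\MI(m_{\ep_k})]_\delta\to\MI(m)-[\MI(m)]_\delta\to 0$ in $L^{p'}_{\loc}$, and the pointwise structure $|G_{\ep_k}^j|\lesssim w_{\ep_k}$ together with $w_{\ep_k}|\nabla m_{\ep_k}|\lesssim\PPI_m^{\ep_k}$.

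The main obstacle is precisely this term $\int_\Omega\MI(m_{\ep_k})\zeta\,\dv G_{\ep_k}^j$: the full entropy production $\dv\Sigma_j(m_{\ep_k})$ is \emph{not} bounded in $L^p_{\loc}$ (the piece $\Psi^{\varphi_j}(m_{\ep_k})\cdot\nabla w_{\ep_k}$ carries the uncontrolled gradient $\nabla w_{\ep_k}=-2\,m_{\ep_k}\cdot\nabla m_{\ep_k}$), so one cannot simply invoke weak--strong convergence for it, and the merely continuous factor $\MI(m_{\ep_k})$ cannot be integrated by parts against it. Arranging the balance between the mollification scale $\delta$ and $\ep_k$ so that the divergence remnant $\dv G_{\ep_k}^j$ provably pairs to zero with $\MI(m_{\ep_k})\zeta$ --- exploiting $\|w_{\ep_k}\|_{L^1_{\loc}}\lesssim\ep_k^{2/3}$ (from $m\in B^{\frac13}_{3,\infty,\loc}$) and $w_{\ep_k}|\nabla m_{\ep_k}|\lesssim\PPI_m^{\ep_k}$ in tandem, possibly together with a further commutator cancellation --- is the delicate point, and is exactly the technical difficulty flagged for the case $p=1$.
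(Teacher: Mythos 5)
There is a genuine gap, and it is exactly the one you flag yourself: the term $\int_\Omega \MI(m_{\ep_k})\zeta\,\dv G^j_{\ep_k}\,dx$ is never shown to vanish. Your mollification scheme does not close: after integrating by parts on $[\MI(m_{\ep_k})]_\delta$ you are left with $\int_\Omega(\MI(m_{\ep_k})-[\MI(m_{\ep_k})]_\delta)\zeta\,\dv G^j_{\ep_k}\,dx$, and since $\MI$ is merely continuous the difference $\MI(m_{\ep_k})-[\MI(m_{\ep_k})]_\delta$ is only small in $L^q$ for $q<\infty$, never in $L^\infty$; pairing it with a quantity controlled only in $L^1_{\loc}$ (which is all you have for $\dv G^j_{\ep_k}=\dv\Sigma_j(m_{\ep_k})-F^j_{\ep_k}$ when $p=1$) gives nothing. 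The same defect already infects your ``easy'' term: for $p=1$ you invoke $\MI(m_{\ep_k})\zeta\to\MI(m)\zeta$ strongly in $L^{p'}_{\loc}=L^\infty_{\loc}$, which is false in general, and you also extract a weakly convergent subsequence of $\{F^j_{\ep_k}\}$ from mere $L^1$-boundedness, which requires equiintegrability (Dunford--Pettis) that you have available from \eqref{cpeq1.23} but never invoke.

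The root cause is that you treat $\Sigma_j$ through the generic commutator decomposition \eqref{eqllkk1}, which manufactures a divergence remainder that is not actually there. The Jin--Kohn entropies, taken as the explicit polynomial fields \eqref{ep101}--\eqref{ep102}, satisfy (for divergence-free $m_\ep$) the exact identities
\begin{equation*}
\dv \Sigma_1\lt(m_{\ep}\rt)=\lt(\partial_1 m_{\ep 2}+\partial_2 m_{\ep 1}\rt)\lt(1-\lt|m_{\ep}\rt|^2\rt),\qquad \dv \Sigma_2\lt(m_{\ep}\rt)=\lt(\partial_2 m_{\ep 2}-\partial_1 m_{\ep 1}\rt)\lt(1-\lt|m_{\ep}\rt|^2\rt),
\end{equation*}
so the \emph{entire} entropy production is pointwise $\lesssim\PPI_m^{\ep}$; there is no $\dv G^j_{\ep_k}$ to dispose of. From there the paper's argument is short: \eqref{cpeq1.23} plus Dunford--Pettis gives equiintegrability of $\{\dv\Sigma_j(m_{\ep_k})\}$, hence weak $L^1_{\loc}$ convergence along a subsequence to a limit identified as $\dv\Sigma_j(m)$ by integration by parts; and the product with the merely continuous factor $\MI(m_{\ep_k})$ is handled by Egorov (uniform convergence of $\MI(m_{\ep_k})$ off a set of small measure) combined with equiintegrability to control the contribution of the bad set. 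You should replace your decomposition by the direct formula and adopt the Egorov/equiintegrability pairing; as written, your proof does not go through for any $p$, and certainly not for $p=1$.
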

\begin{proof}  From equations (25) and (26) in \cite{llp} we have
	\begin{equation*}
		\dv \Sigma_1\lt(m_{\ep}\rt)=\lt(\partial_1 m_{\ep 2}+\partial_2 m_{\ep 1}  \rt)\lt(1-\lt|m_{\ep}\rt|^2\rt),\qd \dv \Sigma_2\lt(m_{\ep}\rt)=\lt(\partial_2 m_{\ep 2}-\partial_1 m_{\ep 1}  \rt)\lt(1-\lt|m_{\ep}\rt|^2\rt).
	\end{equation*}
	It follows from \cite[Lemma 9]{llp} that
	$\lt|\dv \Sigma_j\lt(m_{\ep}\rt)(x)\rt|\lesssim \PPI^{\ep}_m(x)$ for all $x\in \Omega$, $0<\ep<\mathrm{dist}\{x, \partial \Omega\}$ and $j=1,2$. Given $U\subset\subset\Omega$, by assumption \eqref{cpeq1.23}, we know in particular that $\{\PPI^{\ep_k}_m\}\subset L^1(U)$ is a bounded sequence that converges weakly to $\PPI_m\in L^1(U)$. By the Dunford-Pettis Theorem (see \cite[Theorem 1.38]{ambrosio}), the sequence $\{\PPI^{\ep_k}_m\}$ is equiintegrable in $U$. By  \cite[Proposition 1.27]{ambrosio}, it is clear that $\{\dv\Sigma_j(m_{\ep_k})\}\subset L^1(U)$ is also bounded and equiintegrable in $U$, and thus by the Dunford-Pettis Theorem again, upon extraction of a subsequence which is not relabeled, it converges weakly to some $W_j\in L^1(U)$. We claim that $W_j=\dv\Sigma_j(m)$. Indeed, by Lemma \ref{p:controlent} and Remark \ref{R13}, we know $\dv\Sigma_j(m)\in L^p_{\loc}(\Omega)$. Further, we have
	\begin{align*}
		-\int_{U}\Sigma_j\lt(m\rt)\cdot \na\zeta \, dx&=\lim_{k\rightarrow \infty} -\int_{\Omega} \Sigma_j\lt(m_{\ep_k}\rt)\cdot\na\zeta \, dx\\
		&=\lim_{k\to\infty}\int_{\Omega}\dv\Sigma_j(m_{\ep_k})\zeta\,dx= \int_{\Omega} W_j\,\zeta\, dx\qd\text{ for all }\zeta\in C^{\infty}_{c}(U), 
	\end{align*}
	which shows $W_j=\dv \Sigma_j(m)$, $j=1, 2$. Therefore we deduce that
	\begin{equation}
		\label{Sigmaconverge}
		\dv \Sigma_j\lt(m_{\ep_k}\rt)\rightharpoonup\dv \Sigma_j\lt(m\rt) \qd\text{ in }L^1_{\loc}(\Omega)\text{ for }j=1,2. 
	\end{equation}

	Next we establish \eqref{eqfv30}. As $\MI$ is continuous and, upon extraction of another subsequence (not relabeled), $m_{\ep_k}\to m$ a.e., it follows that $\MI\lt(m_{\ep_k}\rt)\to \MI(m)$ a.e. in $\Omega$. Given $U\subset\subset\Omega$, by Egorov's theorem there exists a sequence of 
	subsets $U_l\subset U$ such that $\lt|U\setminus U_l\rt|\to 0$ as $l\to\infty$ and
	\begin{equation}
		\label{eqfv11.3}
		\MI\lt(m_{\ep_k}\rt) \to \MI(m)\qd\text{ in }L^{\infty}(U_l)\text{ as }k\rightarrow \infty\text{ for all }l.
	\end{equation}
	So for fixed $l$, it follows from \eqref{Sigmaconverge} and \eqref{eqfv11.3} that
	\begin{equation}
		\label{eqfv11}
		\lim_{k\rightarrow \infty}\int_{U_l}  \MI\lt(m_{\ep_k}\rt)  \dv \Sigma_j\lt(m_{\ep_k}\rt) \zeta\, dx=\int_{U_l}   \MI\lt(m\rt)  \dv \Sigma_j\lt(m\rt)  \zeta \, dx\qd\text{ for all }\zeta\in C^{\infty}_c(U).
	\end{equation}
	Now we have
	\begin{align}
		\label{eqfv12}
		&\lt|\int_{U}\MI\lt(m_{\ep_k}\rt)  \dv \Sigma_j\lt(m_{\ep_k}\rt) \zeta\, dx-\int_{U}\MI\lt(m\rt)  \dv \Sigma_j\lt(m\rt) \zeta\, dx\rt|\nn\\
		&\qd\leq \lt|\int_{U_l}\MI\lt(m_{\ep_k}\rt)  \dv \Sigma_j\lt(m_{\ep_k}\rt) \zeta\, dx-\int_{U_l}\MI\lt(m\rt)  \dv \Sigma_j\lt(m\rt) \zeta\, dx\rt|\nn\\
		&\qd\qd+\lt|\int_{U\setminus U_l}\MI\lt(m_{\ep_k}\rt)  \dv \Sigma_j\lt(m_{\ep_k}\rt) \zeta\, dx\rt|+\lt|\int_{U\setminus U_l}\MI\lt(m\rt)  \dv \Sigma_j\lt(m\rt) \zeta\, dx\rt|\nn\\
		&\qd\leq \lt|\int_{U_l}\MI\lt(m_{\ep_k}\rt)  \dv \Sigma_j\lt(m_{\ep_k}\rt) \zeta\, dx-\int_{U_l}\MI\lt(m\rt)  \dv \Sigma_j\lt(m\rt) \zeta\, dx\rt|\nn\\
		&\qd\qd+C\int_{U\setminus U_l}\lt|\dv \Sigma_j\lt(m_{\ep_k}\rt) \rt|\, dx+C\int_{U\setminus U_l}\lt|\dv \Sigma_j\lt(m\rt) \rt|\, dx. 
	\end{align}
	By equiintegrability of $\{\dv\Sigma_j(m_{\ep_k})\}$ and $\lt|U\setminus U_l\rt|\to 0$, we know
	\begin{equation}
		\label{dvSigmaremainder}
		\lim_{l\to \infty}\sup_k\int_{U\setminus U_l}\lt|\dv \Sigma_j\lt(m_{\ep_k}\rt)\rt| \, dx=0.
	\end{equation}
	Therefore, by first taking the limit $l\to\infty$ and then taking the limit $k\to\infty$ in \eqref{eqfv12}, we readily deduce \eqref{eqfv30} from \eqref{dvSigmaremainder} and \eqref{eqfv11}.
\end{proof}

\begin{lem}
	\label{lemp5} 
	Let $m$ satisfy \eqref{ageq3}. Assume  \eqref{cpeq1.23} holds true for some sequence $\ep_k\rightarrow 0$ and $1\leq p<\infty$. Given any $\varphi\in C^3(\overline{B}_1)$ satisfying $\Delta \varphi=0$, the harmonic entropy $\Phi^{\varphi}$ defined in \eqref{harment} satisfies $\dv\Phi^{\varphi}(m)\in L^{p}_{\loc}(\Omega)$. Further
	it explicitly holds
	\begin{equation}\label{ep112}
		\dv\Phi^{\varphi}(m)=A^{\varphi}_1(m) e^{i2\theta}\cdot\dv\Sigma(m)+ A^{\varphi}_2(m) i e^{i2\theta}\cdot\dv\Sigma(m) \qd\text{ a.e. in }\Omega,
	\end{equation}
	where $\theta:\Omega\to[0,2\pi)$ satisfies $m(x)=e^{i\theta(x)}$ a.e. in $\Omega$, and $A_1^{\varphi}, A_2^{\varphi}\in C^0(\overline B_1)$ are given by
	\begin{align}
		\label{ep113}
		A^{\varphi}_1(z)&:=\lt(z_1^2-z_2^2\rt)\lt(\frac{3}{2}\partial_{12}\varphi(z)-\frac{z_1}{2}\partial_{222}\varphi(z)+\frac{z_2}{2}\partial_{122}\varphi(z)\rt)\nn\\
		&\qd\qd +2 z_1 z_2\lt(-\frac{3}{2}\partial_{11}\varphi(z)+\frac{z_1}{2}\partial_{122}\varphi(z)-\frac{z_2}{2} \partial_{112}\varphi(z)  \rt)
	\end{align}
	and
	\begin{align}
		\label{ep114}
		A^{\varphi}_2(z)&:=-2 z_1 z_2\lt(\frac{3}{2}\partial_{12}\varphi(z)-\frac{z_1}{2}\partial_{222}\varphi(z)+\frac{z_2}{2}\partial_{122}\varphi(z)\rt)\nn\\
		&\qd\qd +\lt(z_1^2-z_2^2\rt)\lt(-\frac{3}{2}\partial_{11}\varphi(z)+\frac{z_1}{2}\partial_{122}\varphi(z)-\frac{z_2}{2} \partial_{112}\varphi(z)  \rt).
	\end{align}
\end{lem}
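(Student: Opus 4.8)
The plan is to transfer the computation to the smooth regularizations $m_{\ep}:=m\ast\rho_{\ep}$ and pass to the limit along $\ep=\ep_k\to 0$. Recall $\dv m_{\ep}=0$, $\abs{m_{\ep}}\leq 1$, $m_{\ep}\to m$ in $L^q_{\loc}(\Omega)$ for every $q<\infty$ and, along a subsequence, a.e.\ in $\Omega$; in particular $1-\abs{m_{\ep}}^2\to 0$ a.e.\ and in every $L^q_{\loc}(\Omega)$, $q<\infty$. Since $\varphi\in C^3(\overline B_1)$, the entropy $\Phi^{\varphi}$ in \eqref{harment} (which involves no division and is defined on all of $\overline B_1$, where $m_{\ep}$ lives) belongs to $C^2(\overline B_1;\R^2)$, so $\Phi^{\varphi}(m_{\ep})\to\Phi^{\varphi}(m)$ in $L^q_{\loc}$ and hence $\dv\Phi^{\varphi}(m_{\ep})\to\dv\Phi^{\varphi}(m)$ in $\mathcal D'(\Omega)$. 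It therefore suffices to compute the $\mathcal D'$-limit of $\int_{\Omega}\dv\Phi^{\varphi}(m_{\ep})\,\zeta\,dx$ for $\zeta\in C^{\infty}_c(\Omega)$; once \eqref{ep112} is obtained, the membership $\dv\Phi^{\varphi}(m)\in L^p_{\loc}(\Omega)$ is automatic, because $\dv\Sigma_j(m)\in L^p_{\loc}(\Omega)$ by Lemma \ref{p:controlent} and Remark \ref{R13}, and $A^{\varphi}_1,A^{\varphi}_2$ are bounded.

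First I would establish, by a direct chain-rule computation along the lines of \cite{llp}, the exact pointwise identity
\begin{equation*}
\dv\Phi^{\varphi}(m_{\ep})=-\frac12\,\na h(m_{\ep})\cdot\na\bigl(1-\abs{m_{\ep}}^2\bigr),\qquad h(z):=\varphi(z)+z\cdot\na\varphi(z),
\end{equation*}
on the set where $m_{\ep}$ is defined. In the expansion of $\dv\Phi^{\varphi}(m_{\ep})$ the term $\varphi(m_{\ep})\dv m_{\ep}$ vanishes since $\dv m_{\ep}=0$; using $m_{\ep}\cdot\partial_i m_{\ep}=-\tfrac12\partial_i(1-\abs{m_{\ep}}^2)$ together with $\partial_1 m_{\ep 1}=-\partial_2 m_{\ep 2}$, the contribution proportional to $\partial_1 m_{\ep 2}-\partial_2 m_{\ep 1}$ (the ``curl'' of $m_{\ep}$) cancels identically, and $\Delta\varphi=0$ reorganizes the surviving second-order terms into $-\tfrac12\na h(m_{\ep})$. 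This curl cancellation is precisely why the answer is expressible through $\Sigma_1,\Sigma_2$: recalling from \cite{llp} that $\dv\Sigma_1(m_{\ep})=(\partial_1 m_{\ep 2}+\partial_2 m_{\ep 1})(1-\abs{m_{\ep}}^2)$ and $\dv\Sigma_2(m_{\ep})=(\partial_2 m_{\ep 2}-\partial_1 m_{\ep 1})(1-\abs{m_{\ep}}^2)$, these two quantities encode exactly the symmetric trace-free part of $(1-\abs{m_{\ep}}^2)\na m_{\ep}$, which is all that survives.

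Next I would integrate by parts: for $\zeta\in C^{\infty}_c(\Omega)$,
\begin{align*}
\int_{\Omega}\dv\Phi^{\varphi}(m_{\ep})\,\zeta\,dx=&\,\frac12\int_{\Omega}(1-\abs{m_{\ep}}^2)\,\na h(m_{\ep})\cdot\na\zeta\,dx\\
&+\frac12\int_{\Omega}(1-\abs{m_{\ep}}^2)\Bigl(\sum_{i,k=1}^{2}\partial_{ik}h(m_{\ep})\,\partial_i m_{\ep k}\Bigr)\zeta\,dx,
\end{align*}
where the third-order derivatives of $\varphi$ enter through the Hessian of $h$ -- this is where $\varphi\in C^3(\overline B_1)$ is used. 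The first integral tends to $0$ since $1-\abs{m_{\ep}}^2\to 0$ in $L^1_{\loc}$ while $\na h(m_{\ep})$ is bounded. In the second, using $\partial_1 m_{\ep 1}=-\partial_2 m_{\ep 2}$ one has $\sum_{i,k}\partial_{ik}h(m_{\ep})\partial_i m_{\ep k}=(\partial_{11}h-\partial_{22}h)(m_{\ep})\,\partial_1 m_{\ep 1}+\partial_{12}h(m_{\ep})(\partial_1 m_{\ep 2}+\partial_2 m_{\ep 1})$, so multiplying by $1-\abs{m_{\ep}}^2$ and invoking the two formulas for $\dv\Sigma_j(m_{\ep})$ gives
\begin{align*}
&\frac12(1-\abs{m_{\ep}}^2)\sum_{i,k=1}^{2}\partial_{ik}h(m_{\ep})\partial_i m_{\ep k}\\
&\qquad=\frac12\,\partial_{12}h(m_{\ep})\,\dv\Sigma_1(m_{\ep})-\frac14\,(\partial_{11}h-\partial_{22}h)(m_{\ep})\,\dv\Sigma_2(m_{\ep}).
\end{align*}
Applying Lemma \ref{lembite} with $\MI=\partial_{12}h$ and with $\MI=\partial_{11}h-\partial_{22}h$ (both in $C^0(\overline B_1)$), after passing to a further subsequence if needed, I would let $\ep=\ep_k\to 0$ to obtain
\begin{equation*}
\la\dv\Phi^{\varphi}(m),\zeta\ra=\int_{\Omega}\Bigl[\frac12\,\partial_{12}h(m)\,\dv\Sigma_1(m)-\frac14\,(\partial_{11}h-\partial_{22}h)(m)\,\dv\Sigma_2(m)\Bigr]\zeta\,dx
\end{equation*}
for all $\zeta$, whence the bracket is the $L^1_{\loc}$ density of $\dv\Phi^{\varphi}(m)$, and it lies in $L^p_{\loc}$ as noted. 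To finish, a short computation with $h=\varphi+z\cdot\na\varphi$ and $\Delta\varphi\equiv0$ gives, on all of $\overline B_1$,
\begin{align*}
\frac12\partial_{12}h(z)&=\frac32\partial_{12}\varphi(z)-\frac{z_1}{2}\partial_{222}\varphi(z)+\frac{z_2}{2}\partial_{122}\varphi(z)=:P(z),\\
-\frac14(\partial_{11}h-\partial_{22}h)(z)&=-\frac32\partial_{11}\varphi(z)+\frac{z_1}{2}\partial_{122}\varphi(z)-\frac{z_2}{2}\partial_{112}\varphi(z)=:Q(z),
\end{align*}
and one checks the algebraic identities $A^{\varphi}_1(z)(z_1^2-z_2^2)-A^{\varphi}_2(z)\,2z_1z_2=(z_1^2+z_2^2)^2P(z)$ and $A^{\varphi}_1(z)\,2z_1z_2+A^{\varphi}_2(z)(z_1^2-z_2^2)=(z_1^2+z_2^2)^2Q(z)$ with $A^{\varphi}_1,A^{\varphi}_2$ as in \eqref{ep113}--\eqref{ep114}. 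Since $\abs{m}=1$ a.e., $m_1^2-m_2^2=\cos 2\theta$, $2m_1m_2=\sin 2\theta$ and $z_1^2+z_2^2=1$ at $z=m$, so $P(m)\dv\Sigma_1(m)+Q(m)\dv\Sigma_2(m)$ equals the right-hand side of \eqref{ep112}, establishing \eqref{ep112} (and, as the density is a bounded combination of $\dv\Sigma_1(m),\dv\Sigma_2(m)\in L^p_{\loc}(\Omega)$, also $\dv\Phi^{\varphi}(m)\in L^p_{\loc}(\Omega)$).

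The main obstacle is the pointwise identity of the second step: the chain-rule expansion of $\dv\Phi^{\varphi}(m_{\ep})$ must be carried out carefully, and it is the curl cancellation and the $\Delta\varphi=0$ reorganization that force the specific coefficients \eqref{ep113}--\eqref{ep114}. The one genuinely analytic point is the passage to the limit when $p=1$, where weak-$L^1$ limits of $\MI(m_{\ep})\dv\Sigma_j(m_{\ep})$ require the equiintegrability supplied by Lemma \ref{lembite}; for $p>1$ this step is softer, since $\{\dv\Sigma_j(m_{\ep})\}$ is bounded in the reflexive space $L^p_{\loc}$.
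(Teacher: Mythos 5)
Your proposal is correct and follows essentially the same route as the paper: a pointwise chain‑rule identity for $\dv\Phi^{\varphi}(m_\ep)$ isolating the $\dv\Sigma_j(m_\ep)$ contributions plus a remainder carried by $1-|m_\ep|^2$, passage to the limit via Lemma \ref{lembite}, and an algebraic rotation to reach \eqref{ep112}. Your identity $\dv\Phi^{\varphi}(m_\ep)=-\tfrac12\na h(m_\ep)\cdot\na(1-|m_\ep|^2)$ with $h=\varphi+z\cdot\na\varphi$ is, after integration by parts and use of $\Delta\varphi=0$, exactly the identity from \cite[Lemma 18]{llp} that the paper quotes (one checks $-\tfrac12\na h=-B$ and that your $P,Q$ coincide with the paper's $Q^{\varphi}_1,Q^{\varphi}_2$), and your explicit algebraic identities for $A^{\varphi}_1,A^{\varphi}_2$ reproduce the paper's rotation-matrix step.
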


\begin{proof}
	Let $m_{\ep}=m\ast\rho_{\ep}$. By \cite[Lemma 18]{llp} we have that 
	\begin{equation}
		\label{eqfact1}
		\dv\Phi^{\varphi}(m_\ep)=\dv\lt(\lt(|m_\ep|^2-1\rt)B(m_\ep)\rt)+\partial_2 B_1(m_\ep)\dv\Sigma_1(m_\ep)-\partial_1 B_1(m_{\ep})\dv\Sigma_2(m_\ep),
	\end{equation}
	where $B=(B_1, B_2)$ and 
	\begin{align*}
		B_1(z)&=\partial_1\varphi(z) -\frac 12 z_1\partial_{22}\varphi(z) +\frac 12 z_2\partial_{12}\varphi(z),\\
		B_2(z)&=\partial_2\varphi(z) -\frac 12 z_2\partial_{11}\varphi(z) +\frac 12 z_1\partial_{12}\varphi(z).
	\end{align*}
	So, using $\Delta\varphi=0$, we have
	\begin{align*}
		\partial_1B_1(z) &= \partial_{11}\varphi(z)-\frac{1}{2}\partial_{22}\varphi(z)-\frac{1}{2}z_1\partial_{122}\varphi(z)+\frac{1}{2}z_2\partial_{112}\varphi(z)\\
		&=\frac{3}{2}\partial_{11}\varphi(z)-\frac{1}{2}z_1\partial_{122}\varphi(z)+\frac{1}{2}z_2\partial_{112}\varphi(z),\\
		\partial_2B_1(z) &= \partial_{12}\varphi(z)-\frac{1}{2}z_1 \partial_{222}\varphi(z)+\frac{1}{2}\partial_{12}\varphi(z)+\frac{1}{2}z_2 \partial_{122}\varphi(z)\\
		&=\frac{3}{2}\partial_{12}\varphi(z)-\frac{1}{2}z_1\partial_{222}\varphi(z)+\frac{1}{2}z_2\partial_{122}\varphi(z).
	\end{align*}
	Thus we can rewrite \eqref{eqfact1} as
	\begin{align}
		\label{xxeqa12}
		\dv\Phi^{\varphi}(m_\e) & =Q^{\varphi}_1(m_\e)\dv\Sigma_1(m_\e) +  Q^{\varphi}_2(m_\e)\dv\Sigma_2(m_\e)- \dv\lt(\lt(1-\abs{m_\e}^2\rt)B(m_\e)\rt),
	\end{align}
	where
	\begin{equation}
		\label{eqggbba4}
		Q^{\varphi}_1(z)=\frac{3}{2}\partial_{12}\varphi(z)-\frac{1}{2}z_1\partial_{222}\varphi(z)+\frac{1}{2}z_2\partial_{122}\varphi(z)
	\end{equation}
	and
	\begin{equation}
		\label{eqggbba5}
		Q^{\varphi}_2(z)=-\frac{3}{2}\partial_{11}\varphi(z)+\frac{1}{2}z_1\partial_{122}\varphi(z)-\frac{1}{2}z_2\partial_{112}\varphi(z).
	\end{equation}
	
	Hence for  any test function $\zeta\in C_c^1(\Omega)$ it holds
	\begin{align}\label{ep111}
		&-\int_{\Omega} \Phi^{\varphi}(m)\cdot\na\zeta\,dx = \lim_{\e\to 0} \int_{\Omega}\dv\Phi^{\varphi}(m_{\ep})\zeta\,dx\nn\\
		&\qd\qd\qd\qd\qd \overset{(\ref{xxeqa12})}{=} \lim_{\e\to 0} \int_{\Omega} (1-\abs{m_\e}^2)B(m_\e)\cdot\nabla\zeta\,dx + \lim_{\e\to 0} \sum_{j=1,2}\int_{\Omega} \dv \Sigma_j(m_\e)Q^{\varphi}_j(m_\e)\zeta\,dx.
	\end{align}
	The first term in the right-hand side is zero by the Dominated Convergence Theorem. From Lemma \ref{lembite} we also have 
	\begin{align*}
		\lim_{\ep\rightarrow 0}\int_{\Omega} \dv \Sigma_j(m_\e)Q^{\varphi}_j(m_\e)\zeta\,dx \overset{(\ref{eqfv30})}{=} \int_{\Omega}\dv\Sigma_j(m)Q^{\varphi}_j(m)\zeta\,dx\qd\text{ for }j=1,2.
	\end{align*}
	Plugging this into (\ref{ep111}) we obtain
	\begin{equation*}
		-\int_{\Omega} \Phi^{\varphi}(m)\cdot\na\zeta\,dx=\int_{\Omega}\dv\Sigma_1(m)Q^{\varphi}_1(m)\zeta\,dx+\int_{\Omega}\dv\Sigma_2(m)Q^{\varphi}_2(m)\zeta\,dx\qd\text{ for all }\zeta\in C^1_c(\Omega).
	\end{equation*}
	By Lemma  \ref{p:controlent} and Remark \ref{R13}, $\dv\Phi^{\varphi}(m), \dv\Sigma_j(m)\in L^{p}_{\loc}(\Omega)$. Thus we infer that
	\begin{equation}
		\label{eqggbba1}
		\dv\Phi^{\varphi}(m)=Q^{\varphi}_1(m)\dv\Sigma_1(m)+Q^{\varphi}_2(m)\dv\Sigma_2(m)\qd\text{ a.e. in }\Omega.
	\end{equation}
	Letting $Q^{\varphi}(m):=\lt(\begin{array}{c}  Q_1^{\varphi}(m)  \\   Q_2^{\varphi}(m)   \end{array}\rt)$ and $\RI(\theta):=\lt(\begin{array}{cc}  \cos(\theta)  & \sin(\theta) \\   -\sin(\theta) & \cos(\theta)   \end{array}\rt)$, it follows that
	\begin{align}
		\label{eqggbba2}
		\dv\Phi^{\varphi}(m)&\overset{(\ref{eqggbba1})}{=} Q^{\varphi}(m)\cdot \dv \Sigma(m)\nn\\
		&=\lt(\RI(2\theta) Q^{\varphi}(m)  \rt)\cdot \lt(\RI(2\theta)   \dv \Sigma(m) \rt)\nn\\
		&=\lt( \begin{array}{c} e^{i2\theta}\cdot Q^{\varphi}(m) \\   i e^{i2\theta}\cdot Q^{\varphi}(m) \end{array} \rt)\cdot    
		\lt( \begin{array}{c} e^{i2\theta}\cdot \dv \Sigma(m) \\   i e^{i2\theta}\cdot \dv \Sigma(m) \end{array} \rt)\nn\\
		&= \lt(e^{i2\theta}\cdot Q^{\varphi}(m) \rt) \lt( e^{i2\theta}\cdot \dv \Sigma(m)\rt)+ \lt(i e^{i2\theta}\cdot Q^{\varphi}(m) \rt) \lt(i e^{i2\theta}\cdot \dv \Sigma(m)\rt).
	\end{align}
	Note that
	\begin{equation}
		\label{eqggbba3}
		e^{i2\theta}\cdot Q^{\varphi}(m) =\lt(\begin{array}{c}  m_1^2-m_2^2 \\ 2 m_1 m_2\end{array}\rt)\cdot Q^{\varphi}(m),\qd
		ie^{i2\theta}\cdot Q^{\varphi}(m) =\lt(\begin{array}{c}  -2 m_1 m_2 \\ m_1^2-m_2^2 \end{array}\rt)\cdot Q^{\varphi}(m).
	\end{equation}
	Now putting (\ref{eqggbba4}), (\ref{eqggbba5}), (\ref{eqggbba2}) and (\ref{eqggbba3}) together gives (\ref{ep112})--(\ref{ep114}). 
\end{proof}

%
%

%
%

\begin{lem}
	\label{fmlem} 
	Define the operators $\AI_j: C^4(\mathbb{S}^1)\to C^0(\mathbb{S}^1), j=1, 2$ by
	\begin{equation}
		\label{eqhex2}
		\AI_j\psi:=A^{E\psi}_{j\, \lfloor \mathbb{S}^1}\qd\text{ for all }\psi\in C^4(\mathbb{S}^1),
	\end{equation}
	where $A_j$ are defined in \eqref{ep113}-\eqref{ep114}. Further let $\BI_1, \BI_2: C^4(\mathbb{S}^1)\to C^0(\mathbb{S}^1)$ be the Fourier multiplier operators characterized by
	\begin{equation}
		\label{eqggbba6}
		\BI_1\psi_k=\frac{i k}{2} (k^2-1)\psi_k\qd\text{ for all }  k\in \mathbb{Z}
	\end{equation}
	and 
	\begin{equation*}
		\BI_2\psi_k=-\frac{\lt|k\rt|}{2}(k^2-1)\psi_k\qd\text{ for all }  k\in \mathbb{Z},
	\end{equation*}
	where $\psi_k(e^{i\theta})=e^{ik\theta}$. Then we have
	\begin{equation}
		\label{AequalB}
		\AI_j\psi=\BI_j\psi\qd\text{ for } j=1, 2\text{ and for all }\psi\in C^4(\mathbb{S}^1).
	\end{equation}
\end{lem}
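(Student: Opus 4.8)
The idea is to reduce \eqref{AequalB} to the Fourier modes $\psi_k$ by linearity and density, and then to compute $A_j^{E\psi_k}$ explicitly via Wirtinger calculus. First I would record that all four operators are bounded and linear from $C^4(\mathbb{S}^1)$ to $C^0(\mathbb{S}^1)$. For $\AI_j$ this is immediate: by Remark \ref{teclem} the harmonic extension $E$ is bounded from $C^4(\mathbb{S}^1)$ to $C^3(\overline{B}_1)$, while $\varphi\mapsto A_j^\varphi$ is visibly a bounded linear map from $C^3(\overline{B}_1)$ to $C^0(\overline{B}_1)$ (a fixed linear combination of derivatives of $\varphi$ of order at most $3$ with polynomial coefficients), so the composition followed by restriction to $\mathbb{S}^1$ is bounded and linear. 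For $\BI_j$, the multipliers $\frac{ik}{2}(k^2-1)$ and $-\frac{|k|}{2}(k^2-1)$ are $O(|k|^3)$, and the Parseval plus Cauchy--Schwarz estimate used in Remark \ref{teclem} gives $\sum_{|k|\geq 1}|k|^3|c_k(\psi)|\lesssim \|\psi^{(4)}\|_{C^0(\mathbb{S}^1)}$ for $\psi=\sum_k c_k(\psi)\psi_k\in C^4(\mathbb{S}^1)$, hence $\|\BI_j\psi\|_{C^0(\mathbb{S}^1)}\lesssim \|\psi\|_{C^4(\mathbb{S}^1)}$. Since the trigonometric polynomials $\mathrm{span}\{\psi_k:k\in\Z\}$ are dense in $C^4(\mathbb{S}^1)$ (e.g.\ via Fej\'er means, which commute with differentiation), it suffices to prove \eqref{AequalB} for $\psi=\psi_k$, each $k\in\Z$.

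Next I would identify the harmonic extension: under the identification $\R^2\cong\mathbb{C}$ with $z=z_1+iz_2$, formula \eqref{eqhex1} gives $E\psi_k(z)=z^k$ for $k\geq 0$ and $E\psi_k(z)=\bar z^{|k|}$ for $k<0$, i.e.\ a holomorphic resp.\ antiholomorphic polynomial. The cases $k\in\{0,\pm 1\}$ are trivial, since then $E\psi_k$ is affine and all second- and third-order derivatives vanish, so $A_j^{E\psi_k}\equiv 0=\BI_j\psi_k$. For $|k|\geq 2$ I would use Wirtinger calculus: for holomorphic $\varphi$ one has $\partial_1^a\partial_2^b\varphi=i^b\varphi^{(a+b)}$, and for antiholomorphic $\varphi$ one has $\partial_1^a\partial_2^b\varphi=(-i)^b\bar\partial^{\,a+b}\varphi$ with $\bar\partial=\partial/\partial\bar z$. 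Substituting the resulting formulas for $\partial_{12}\varphi,\partial_{11}\varphi,\partial_{222}\varphi,\partial_{122}\varphi,\partial_{112}\varphi$ into \eqref{ep113}--\eqref{ep114}, the crucial point is that the two bracketed factors collapse: for $k\geq 2$ and $\varphi=z^k$ the identity $iz_1-z_2=iz$ turns the first bracket in \eqref{ep113} into $\tfrac{ik(k^2-1)}{2}z^{k-2}$, while $z_1+iz_2=z$ turns the second bracket into $i$ times the first; since $(z_1^2-z_2^2)+i(2z_1z_2)=z^2$, this yields
\begin{equation*}
A_1^{E\psi_k}(z)=\frac{ik(k^2-1)}{2}\,z^{k-2}\cdot z^2=\frac{ik}{2}(k^2-1)z^k,\qquad A_2^{E\psi_k}(z)=-\frac{k}{2}(k^2-1)z^k,
\end{equation*}
and restricting to $z=e^{i\theta}\in\mathbb{S}^1$, where $z^k=\psi_k$ and $k=|k|$, gives exactly $\AI_j\psi_k=\BI_j\psi_k$.

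For $k\leq -2$ the computation is the mirror image with $\varphi=\bar z^{\,n}$, $n=|k|$: the Wirtinger derivatives now produce powers of $\bar z$, the collapses use $iz_1+z_2=i\bar z$ and $z_1-iz_2=\bar z$, and $(z_1^2-z_2^2)-i(2z_1z_2)=\bar z^2$, giving $A_1^{E\psi_k}(z)=-\tfrac{in(n^2-1)}{2}\bar z^{\,n}$ and $A_2^{E\psi_k}(z)=-\tfrac{n(n^2-1)}{2}\bar z^{\,n}$; restricting to $z=e^{i\theta}$, where $\bar z^{\,n}=e^{-in\theta}=e^{ik\theta}=\psi_k$ with $k=-n$ and $k^2=n^2$, turns these into $\AI_1\psi_k=\tfrac{ik}{2}(k^2-1)\psi_k=\BI_1\psi_k$ and $\AI_2\psi_k=-\tfrac{|k|}{2}(k^2-1)\psi_k=\BI_2\psi_k$. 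Combining the three cases with the boundedness and density from the first paragraph yields \eqref{AequalB} for all $\psi\in C^4(\mathbb{S}^1)$. I expect the only genuinely delicate point to be the sign and index bookkeeping in the antiholomorphic branch, where one must keep in mind that $E\psi_k$ for $k<0$ is $\bar z^{|k|}$ rather than $z^k$; apart from that the computation is elementary and essentially forced once the Wirtinger substitutions are made.
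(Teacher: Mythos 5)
Your proposal is correct and follows essentially the same route as the paper: boundedness of $\AI_j,\BI_j$ on $C^4(\mathbb S^1)$ plus density of trigonometric polynomials reduces the claim to the modes $\psi_k$, and the explicit computation of $A_j^{E\psi_k}$ for $\varphi=z^k$ yields exactly the paper's formulas (the Wirtinger identities you invoke are just a compact packaging of the paper's use of $\partial_1\varphi_k=kz^{k-1}$, $\partial_2\varphi_k=ikz^{k-1}$). The only cosmetic differences are that you fold $k=2$ into the general formula and treat $k<0$ by a direct antiholomorphic computation, whereas the paper checks $0\le k\le 2$ by hand and handles $k<0$ via the conjugation symmetry $A_j^{E\psi_k}=\overline{A_j^{E\psi_{-k}}}$; both variants check out.
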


\begin{rem}
	As $\varphi\mapsto A^{\varphi}_j$ is linear, the definition of $\AI_j$ in \eqref{eqhex2} can be naturally extended to complex-valued $\psi$, and in particular $\psi_k$. We will make use of this implicitly in the sequel.
\end{rem}

\begin{proof}[Proof of Lemma \ref{fmlem}] 
	Using almost exactly the same arguments as in Remark \ref{teclem} one can show that $\AI_j, \BI_j$ are continuous operators from $C^4(\mathbb{S}^1)$ to $C^0(\mathbb{S}^1)$. Thus, to establish \eqref{AequalB}, it suffices to show $\AI_j\psi_k=\BI_j\psi_k$ for all $k\in\mathbb{Z}$ and $j=1, 2$ as the linear span of $\{\psi_k\}_{k\in\mathbb{Z}}$ is dense in $C^4(\mathbb{S}^1)$. 
	
	Let $\varphi_k=E\psi_k$. From \eqref{eqhex1} we have $\varphi_k(z)=z^k$ for $k\geq 0$, and thus 
	\begin{equation}
		\label{appeqd3}
		\partial_1\varphi_k=k z^{k-1}\qd\text{ and }\qd \partial_2\varphi_k=ik z^{k-1}. 
	\end{equation}
		So for $k\geq 3$ we have 
	\begin{align*}
		&\frac{3}{2}\partial_{12}\varphi_k-\frac{z_1}{2}\partial_{222}\varphi_k+\frac{z_2}{2}  \partial_{122} \varphi_k\nn\\
		&\qd=\frac{3}{2}ik(k-1)z^{k-2}+\frac{z_1}{2}ik(k-1)(k-2)z^{k-3}-\frac{z_2}{2} k (k-1)(k-2)z^{k-3}\nn\\
		&\qd=\frac{3}{2}ik(k-1)z^{k-2}+\frac{ik (k-1)(k-2)}{2}(z_1+iz_2)z^{k-3}\nn\\
		&\qd=\frac{ik(k-1)(k+1)}{2}z^{k-2}
	\end{align*}
	and 
	\begin{align*}
		&-\frac{3}{2}\partial_{11}\varphi_k+\frac{z_1}{2}\partial_{122}\varphi_k-\frac{z_2}{2}  \partial_{112} \varphi_k\nn\\
		&\qd=-\frac{3}{2}k(k-1)z^{k-2}-\frac{z_1}{2}k(k-1)(k-2)z^{k-3}-\frac{z_2}{2} i k (k-1)(k-2)z^{k-3}\nn\\
		&\qd=-\frac{3}{2}k(k-1)z^{k-2}-\frac{k(k-1)(k-2)}{2}(z_1+iz_2)z^{k-3}\nn\\
		&\qd=-\frac{k(k-1)(k+1)}{2}z^{k-2}.  
	\end{align*}
	Hence, it follows from \eqref{ep113}, \eqref{ep114} and the above computations that
	\begin{align*}
		A_1^{\varphi_k}&=\lt(z_1^2-z_2^2\rt) \frac{ik(k-1)(k+1)}{2}z^{k-2}-2 z_1 z_2\frac{ k(k-1)(k+1)}{2}z^{k-2}\nn\\
		&=\frac{i k(k^2-1)}{2}\lt(z_1^2-z_2^2 +2i z_1 z_2   \rt) z^{k-2}=\frac{i k(k^2-1)}{2} z^k,
	\end{align*}
	and
	\begin{align*}
		A_2^{\varphi_k}&=-2 z_1 z_2 \frac{ik(k-1)(k+1)}{2}z^{k-2}-\lt(z_1^2-z_2^2\rt)\frac{k(k-1)(k+1)}{2}z^{k-2}\nn\\
		&=-\frac{k(k^2-1)}{2}\lt(z_1^2-z_2^2 +2i z_1 z_2   \rt) z^{k-2}=-\frac{k(k^2-1)}{2} z^k.
	\end{align*}
	Thus we have
	\begin{equation*}
		\AI_1\psi_k=\frac{i k(k^2-1)}{2} \psi_k=\BI_1\psi_k,\qd \AI_2\psi_k=-\frac{k(k^2-1)}{2} \psi_k=\BI_2\psi_k\qd\text{ for all }k\geq 3.
	\end{equation*}
	For $0\leq k\leq 2$, $\AI_j\psi_k=\BI_j\psi_k$ can be checked directly. For $k<0$, note that $E\psi_k=\overline{E\psi_{-k}}$ and thus $A_j^{E\psi_k}=A_j^{\overline{E\psi_{-k}}}=\overline{A_j^{E\psi_{-k}}}$. Using this and the formulas established for $k>0$, one can check directly that $\AI_j\psi_k=\BI_j\psi_k$ for $k<0$. Thus we have established $\AI_j\psi_k=\BI_j\psi_k$ for all $k\in\mathbb{Z}$ and $j=1, 2$.
\end{proof}

Now combining Lemmas \ref{lemp5} and \ref{fmlem} completes the proof of Proposition \ref{prop:decomposition}. Next we show that the term $i e^{i2\theta}\cdot\dv\Sigma(m)$ must indeed vanish a.e. due to the specific structure of its coefficient $\AI_2\psi$, and thus the factorization formula \eqref{Entdecom} has only the first term on the right-hand side. This constitutes the most important step towards the proof of Theorem \ref{C1}.

%
%

%
%

\begin{prop} 
	\label{L17}
	Let $m$ satisfy \eqref{ageq3}. Assume  \eqref{cpeq1.23} holds true for some sequence $\ep_k\rightarrow 0$ and $1\leq p<\infty$. Given any $\psi\in C^4(\mathbb{S}^1)$, the harmonic entropy $\Phi^{E\psi}$ given by \eqref{harment} satisfies $\dv\Phi^{E\psi}(m)\in L^p_{\loc}(\Omega)$ and the formula
	\begin{equation}
		\label{eqmma5}
		\dv \Phi^{E \psi}(m)=\AI_1\psi(m) e^{i2\theta}\cdot \dv \Sigma(m)\qd\text{ a.e. in }\Omega,
	\end{equation}
	where $\theta:\Omega\to[0,2\pi)$ satisfies $m(x)=e^{i\theta(x)}$ a.e. in $\Omega$. Further, it explicitly holds
	\begin{equation}
		\label{eqmma6}
		\AI_1\psi=-\frac{1}{2}\lt(\psi^{(3)}+\psi'\rt)\qd\text{ for all }\psi\in C^4(\mathbb{S}).
	\end{equation}
\end{prop}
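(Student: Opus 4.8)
The plan is to build on Proposition~\ref{prop:decomposition}, which under \eqref{cpeq1.23} already supplies $\dv\Phi^{E\psi}(m)\in L^p_{\loc}(\Omega)$ together with the two--term factorization \eqref{Entdecom} for every $\psi\in C^4(\mathbb S^1)$, and to upgrade it to \eqref{eqmma5}--\eqref{eqmma6}. Formula \eqref{eqmma6} I would dispatch at once: by Proposition~\ref{prop:decomposition}, $\AI_1$ is the Fourier multiplier with symbol $\tfrac{ik}{2}(k^2-1)$, and the operator $-\tfrac12(\partial_\theta^3+\partial_\theta)$ has the same symbol, $-\tfrac12\big((ik)^3+ik\big)=\tfrac{ik}{2}(k^2-1)$; since both operators are bounded from $C^4(\mathbb S^1)$ to $C^0(\mathbb S^1)$ (for $\AI_1$ by the estimate in Remark~\ref{teclem}), agree on each $\psi_k$, and $\mathrm{span}\{\psi_k\}_{k\in\mathbb Z}$ is dense in $C^4(\mathbb S^1)$, they coincide. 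The main point that remains is to show that the second term in \eqref{Entdecom} drops out, i.e.\ that $c(x):=ie^{i2\theta(x)}\cdot\dv\Sigma(m)(x)=0$ for a.e.\ $x\in\Omega$, after which \eqref{Entdecom} reduces to \eqref{eqmma5}. I would prove this by contradiction, in the spirit of \cite{llp}.

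Two structural facts set up the contradiction. First, evaluating the harmonic entropy on $\mathbb S^1$, using that $E\psi$ reproduces boundary data while its normal derivative is annihilated by $iz\cdot\na(E\psi)$ there, one finds $\Phi^{E\psi}_{\lfloor\mathbb S^1}(e^{i\theta})=\psi(\theta)e^{i\theta}+\psi'(\theta)ie^{i\theta}$, whence $\|\Phi^{E\psi}_{\lfloor\mathbb S^1}\|_{C^2(\mathbb S^1)}\le C\|\psi\|_{C^3(\mathbb S^1)}$ -- note only $\|\psi\|_{C^3}$, not $\|\psi\|_{C^4}$. Second, the quotient of the symbols of $\AI_2$ and $\AI_1$ equals $i\,\mathrm{sgn}(k)$, so $\AI_2=M\circ\AI_1$ where $M$ is the Fourier multiplier with symbol $i\,\mathrm{sgn}(k)$ -- the conjugate function on $\mathbb S^1$ up to sign -- which is not bounded from $C^0(\mathbb S^1)$ to $L^\infty(\mathbb S^1)$. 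Now suppose $|\{c\ne0\}|>0$. Using a countable $C^4(\mathbb S^1)$-dense family $\{\psi_j\}$ and the linearity and continuity in $\psi$ of the two sides of \eqref{Entdecom} (via \eqref{eq621.2} applied to the differences $\Phi^{E(\psi_j-\psi_{j'})}$, upgraded to a common full--measure set as in \cite{llp}), I would first arrange that, at a.e.\ $x_0$, the pointwise identity
\[
\AI_1\psi(m(x_0))\,b(x_0)+\AI_2\psi(m(x_0))\,c(x_0)=\dv\Phi^{E\psi}(m)(x_0),\qquad b(x):=e^{i2\theta(x)}\cdot\dv\Sigma(m)(x),
\]
holds simultaneously for all $\psi\in C^4(\mathbb S^1)$. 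Choosing such an $x_0\in\{c\ne0\}$ that is also a Lebesgue point of $\PPI_m$ and of $\dv\Sigma_j(m)$, and estimating $|b(x_0)|\le C\PPI_m(x_0)$ (by \eqref{eq621.2} and Remark~\ref{R13}), $|\dv\Phi^{E\psi}(m)(x_0)|\le C\|\Phi^{E\psi}_{\lfloor\mathbb S^1}\|_{C^2}\PPI_m(x_0)\le C\|\psi\|_{C^3}\PPI_m(x_0)$, and $|\AI_1\psi(m(x_0))|\le\|\AI_1\psi\|_{C^0}\le\|\psi\|_{C^3}$ (using \eqref{eqmma6}), I would solve the identity for the second term:
\[
|c(x_0)|\,\big|(\AI_2\psi)(m(x_0))\big|\le C\,\PPI_m(x_0)\,\|\psi\|_{C^3(\mathbb S^1)}\qquad\text{for all }\psi\in C^4(\mathbb S^1).
\]

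To contradict this I would feed in the classical sequence witnessing the unboundedness of $M$. Writing $\theta_0$ for the argument of $m(x_0)$, set $u_n(\theta):=\sum_{k=2}^{n}k^{-1}\sin\!\big(k(\theta-\theta_0)\big)$; then $\|u_n\|_{C^0(\mathbb S^1)}\le C$ uniformly in $n$ (classical), $u_n$ has only Fourier modes with $2\le|k|\le n$, and $(Mu_n)(\theta_0)=\sum_{k=2}^{n}k^{-1}\to\infty$. Since $u_n$ has no modes $|k|\le 1$, it lies in the range of $\AI_1$; take $\psi_n:=\AI_1^{-1}u_n$, the trigonometric polynomial with $\widehat{\psi_n}(k)=\tfrac{2}{ik(k^2-1)}\widehat{u_n}(k)$ for $|k|\ge2$ and $0$ otherwise, so $\AI_1\psi_n=u_n$. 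As $\AI_1^{-1}$ gains three derivatives -- concretely $\psi_n^{(3)}=-2u_n-2\,g\ast u_n$ with $g\in C(\mathbb S^1)$ of Fourier coefficients $\tfrac1{k^2-1}$ on $|k|\ge2$, and $\psi_n,\psi_n',\psi_n''$ obtained from $\psi_n^{(3)}$ by anti-differentiation on the complementary subspace -- one gets $\|\psi_n\|_{C^3(\mathbb S^1)}\le C\|u_n\|_{C^0(\mathbb S^1)}\le C$. Then $(\AI_2\psi_n)(m(x_0))=(M u_n)(\theta_0)\to\infty$ while the right--hand side of the preceding display stays bounded, forcing $c(x_0)=0$ -- a contradiction. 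Hence $c\equiv0$ a.e., \eqref{Entdecom} collapses to \eqref{eqmma5}, and together with \eqref{eqmma6} this completes the proof.

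The delicate point is the regularity balance in the last two displays: the scheme works only because the quantity controlling $\dv\Phi^{E\psi}(m)$ pointwise is $\|\psi\|_{C^3}$ -- which relies on the cancellation of the normal derivative of $E\psi$ on $\mathbb S^1$ -- whereas, dually, inverting $\partial_\theta^3+\partial_\theta$ keeps $\|\psi_n\|_{C^3}$ bounded even though $\|u_n\|_{C^1}\to\infty$; this is exactly the slack exploited by the failure of boundedness of the conjugate function from $C^0$ to $L^\infty$. The remaining ingredients -- the symbol identification for \eqref{eqmma6} and the measure--theoretic passage to an identity valid for all $\psi$ at once -- are routine and parallel \cite{llp}.
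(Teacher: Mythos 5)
Your proposal is correct and follows essentially the same route as the paper: start from Proposition \ref{prop:decomposition}, identify $\AI_1$ with $-\tfrac12(\partial_\theta^3+\partial_\theta)$ by matching Fourier symbols on the dense span of the $\psi_k$, and kill the $\AI_2$-term by contradiction at a point where $ie^{i2\theta}\cdot\dv\Sigma(m)\ne 0$, exploiting that $\|\Phi^{E\psi}\|_{C^2(\mathbb S^1)}\lesssim\|\psi\|_{C^3}$ while $\AI_2$ fails to be bounded from $C^3$ into $L^\infty$ because of the conjugate-function factor $i\,\mathrm{sgn}(k)$. The only difference is that where the paper splits $\AI_2=\AI_2^1+\AI_2^2$ and cites the Hilbert-transform argument from \cite{llp}, you make the final contradiction self-contained via the explicit sequence $\psi_n=\AI_1^{-1}\bigl(\sum_{k=2}^n k^{-1}\sin(k(\cdot-\theta_0))\bigr)$ with $\|\psi_n\|_{C^3}$ bounded and $\AI_2\psi_n(m(x_0))\to\infty$; this construction checks out and is a valid replacement for the citation.
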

\begin{proof} 
	Let $\wt\BI_1: C^3(\mathbb{S}^1)\to C^0(\mathbb{S}^1)$ be the continuous linear operator defined by $\wt\BI_1\psi:=-\frac 1 2 \lt(\psi^{(3)}+\psi'\rt)$ for all $\psi\in C^3(\mathbb{S}^1)$. From \eqref{AequalB} and \eqref{eqggbba6}, it is clear that $\AI_1\psi_k=\BI_1\psi_k=\wt\BI_1\psi_k$ for all $k\in\mathbb{Z}$. As $\AI_1$ and $\wt\BI_1$ are both continuous from $C^4(\mathbb{S}^1)$ to $C^0(\mathbb{S}^1)$ and the linear span of $\{\psi_k\}_{k\in\mathbb{Z}}$ is dense in $C^4(\mathbb{S}^1)$, it follows that \eqref{eqmma6} holds for all $\psi\in C^4(\mathbb{S})$.
	
	Now comparing \eqref{eqmma5} with \eqref{Entdecom}, it suffices to show $i e^{i2\theta}\cdot \dv \Sigma\lt(m\rt)=0$ a.e. in $\Omega$. To this end, we argue in a very similar way to the proof of \cite[Theorem 1]{llp}. Let $\mathcal X \subset C^4(\mathbb{S}^1)$ denote a countable dense subset. Let $\mathcal G\subset\Omega$ be the set of all points $x\in\Omega$ at which  $\PPI_m(x)<\infty$ and $|\dv\Sigma_j(m)(x)|<\infty$ for $j=1, 2$, and both:
	\begin{itemize}
		\item the explicit expression of $\dv\Phi^{E\psi}(m)$ given by \eqref{Entdecom},
		\item its control in terms of $\norm{\Phi^{E\psi}}_{C^2(\mathbb{S}^1)}$ given by \eqref{eq621.2},
	\end{itemize}
	hold for all $\psi\in\mathcal X$. By Proposition~\ref{prop:decomposition} and Lemma~\ref{p:controlent}, the set $\mathcal G$ has full measure in $\Omega$. From \eqref{harment} we have $\Phi^{E\psi}(e^{i\theta})=\psi(e^{i\theta})e^{i\theta}+\psi'(e^{i\theta})ie^{i\theta}$, and thus $\|\Phi^{E\psi}\|_{C^2(\mathbb{S}^1)}\lesssim \|\psi\|_{C^3(\mathbb{S}^1)}$. Therefore the estimate \eqref{eq621.2} becomes
	\begin{equation}
		\label{eqiop11}
		\lt|\dv \Phi^{E\psi}(m)(x) \rt|\lesssim \|\psi\|_{C^3\lt(\mathbb{S}^1\rt)}\PPI_m(x)\qd\text{ for all }x\in\mathcal{G}, \psi\in\mathcal X.
	\end{equation}
	And we know from \eqref{eqmma6} that $\|\mathcal{A}_1\psi\|_{C^0\lt(\mathbb{S}^1\rt)}\lesssim \|\psi\|_{C^3\lt(\mathbb{S}^1\rt)}$ for all $\psi\in C^4(\mathbb{S}^1)$. Suppose for some $x_0\in \mathcal{G}$ we have $i e^{i2\theta(x_0)}\cdot \dv \Sigma\lt(m\rt)(x_0)\not =0$, then it follows from \eqref{Entdecom} and \eqref{eqiop11} that
	\begin{align}
		\label{eqiop12}
		\lt|\AI_2\psi(m(x_0))\rt|&\lesssim \lt|i e^{i2\theta(x_0)}\cdot \dv \Sigma\lt(m\rt)(x_0)\rt|^{-1}\lt(\PPI_m(x_0)+\lt|\dv\Sigma(m)(x_0)\rt|\rt)\|\psi\|_{C^3\lt(\mathbb{S}^1\rt)}\nn\\
		&=C(m,x_0) \|\psi\|_{C^3\lt(\mathbb{S}^1\rt)}\qd\text{ for all }\psi\in\mathcal X.
	\end{align}
	As $\mathcal X\subset C^4(\mathbb{S}^1)$ is dense in $C^4(\mathbb{S}^1)$ and both sides of \eqref{eqiop12} depend on $\psi$ continuously in the $C^4$ topology, we deduce that
	\begin{equation*}
		\lt|\AI_2\psi(m(x_0))\rt|\lesssim C(m,x_0) \|\psi\|_{C^3\lt(\mathbb{S}^1\rt)}\qd\text{ for all }\psi\in C^4(\mathbb{S}^1). 
	\end{equation*}
	
	The remainder of the argument follows very closely the proof of \cite[Theorem 1]{llp} and thus we sketch it somewhat briefly. We identify functions on $\mathbb{S}^1$
	with $2\pi$-periodic functions on $\mathbb{R}$, and
	$m(x_0)\in \mathbb{S}^1$ with its argument $\theta_0\in \mathbb{R}/ 2\pi \mathbb{Z}$. So we have $\lt|\AI_2\psi(\theta_0)\rt|\lesssim C(m, x_0)
	\|\psi\|_{C^3\lt(\mathbb{S}^1\rt)}$ for all $\psi\in C^4(\mathbb{S}^1)$. This estimate turns into the stronger estimate \begin{equation}
		\label{A2}
		\lVert \AI_2\psi\rVert_{L^{\infty}(\mathbb{S}^1)}\lesssim C(m,x_0)\|\psi\|_{C^3(\mathbb{S}^1)}\qd\text{ for all }\psi\in C^4(\mathbb{S}^1)
	\end{equation}	
	because the multiplier operator $\AI_2$ commutes with translations of the variable. Now decomposing $\AI_2=\AI_2^1+\AI_2^2$, where $\AI_2^1\psi_k=-\frac{|k|^3}{2}\psi_k$ and $\AI_2^2\psi_k=\frac{|k|}{2}\psi_k$ for all $k\in\mathbb{Z}$, a simple argument shows that $\AI_2^2$ can be extended to a continuous linear operator from $C^3(\mathbb{S}^1)$ to $C^0(\mathbb{S}^1)$. Thus the estimate  \eqref{A2} gives 
	\begin{equation}
		\label{contradiction}
		\|\AI_2^1\psi\|_{L^{\infty}(\mathbb{S}^1)}\lesssim \wt C \|\psi\|_{C^3(\mathbb{S}^1)}\qd\text{ for all }\psi\in C^4(\mathbb{S}^1). 
	\end{equation}
	This estimate is of exactly the same type as the estimate (51) in \cite{llp}, which is shown to lead to a contradiction because of well known properties of the Hilbert transform; see the arguments below equation (51) in \cite{llp} for the details. The  contradiction from the estimate \eqref{contradiction} shows that $i e^{i2\theta(x)}\cdot \dv \Sigma\lt(m\rt)(x)=0$ for all $x\in \mathcal G$, and thus establishes (\ref{eqmma5}).
\end{proof}

\subsection{Harmonic extensions for $\Phi_f$}
\label{sec:harmext}

In this subsection we  construct specific extensions to $\overline B_1$ of the entropies $\Phi_f$ defined by \eqref{eqplm1} on $\mathbb S^1$. For $f$ sufficiently regular, these extensions are harmonic entropies and enjoy the nice formula \eqref{eqmma5} established in Proposition \ref{L17}. This fact coupled with the special structure of the entropies $\Phi_f$ leads to the explicit structure of the kinetic measure $\sigma$ in \eqref{abeqa1.5}. 

Given $f\in L^2(\R/2\pi\Z)$, let $a_k(f)$ and $b_k(f)$ denote the standard Fourier coefficients of $f$, i.e.
\begin{equation*}
	a_k(f)=\frac{1}{\pi}\int_0^{2\pi} f(t)\cos(kt)\,dt,\qquad b_k(f)=\frac{1}{\pi}\int_0^{2\pi} f(t)\sin(kt)\,dt\qquad\text{for }k\geq 1.
\end{equation*}
We define $\xi_f\colon \overline B_1\to\R$ by
\begin{equation}\label{eq:xif}
	\xi_f = \sum_{k\geq 1} \frac{(-1)^k}{k(1-2k)(1+2k)}\left( - a_{2k}(f) \varphi^1_{2k} + b_{2k}(f)\varphi^2_{2k}\right),
\end{equation}
where $\varphi_k=\varphi_k^2+i\varphi_k^1$ and $\varphi_k(z)=z^k$ for $z\in\mathbb{C}$ and $k\geq 0$. So explicitly in polar coordinates we have
\begin{equation*}
	\varphi^1_k =r^k\sin(k\theta),\quad\varphi^2_k=r^k\cos (k\theta).
\end{equation*}
Note that this choice of indices for $\varphi_k^1$ and $\varphi_k^2$ is apparently natural from some of the following computations; see e.g. \eqref{eqad12} and \eqref{eqad13}.

\begin{prop}\label{p:harmext}
	For all $f\in L^2(\R/2\pi\Z)$, the function $\xi_f$ given by \eqref{eq:xif} belongs to $C^2(\overline B_1)$ and solves $\Delta\xi_f =0$ in $B_1$. The entropy given by
	\begin{equation}\label{eq:extPhif}
		\Phi^{\xi_f}(z)=\xi_f(z)z +((iz)\cdot\nabla\xi_f(z))iz\qquad\forall z\in\overline B_1
	\end{equation}
	extends the entropy $\Phi_f$ defined by \eqref{eqplm1} on $\mathbb S^1=\partial B_1$, up to a linear term:
	\begin{equation}\label{eq:harmext}
		\Phi^{\xi_f}(z)=\Phi_f(z)  +  \left(2\sum_{k\geq 2}\frac{b_{k}(f)}{k}\right) z\qquad\forall z\in\partial B_1.
	\end{equation}
	If moreover $f\in H^\ell(\R/2\pi\Z)$ for some $\ell\in\mathbb N$, then the series \eqref{eq:xif} defining $\xi_f$ converges in $C^{\ell+2}(\overline B_1)$.
\end{prop}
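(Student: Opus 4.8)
The plan is to treat the three assertions separately. The two convergence statements will follow from crude termwise bounds and Cauchy--Schwarz; the extension identity \eqref{eq:harmext} will be reduced to a scalar Fourier identity, which is where the real work lies.

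\textbf{Convergence and harmonicity.} Since $\varphi^1_k$ and $\varphi^2_k$ are the imaginary and real parts of $z\mapsto z^k$, the computation in Remark~\ref{teclem} gives $\norm{\varphi^1_k}_{C^m(\overline B_1)}+\norm{\varphi^2_k}_{C^m(\overline B_1)}\lesssim_m(1+k)^m$ for every $m\in\mathbb N$, and each $\varphi^j_k$ is harmonic. The coefficient of $\varphi^j_{2k}$ in \eqref{eq:xif} has modulus $\frac{1}{k(4k^2-1)}\lesssim k^{-3}$, so for $f\in L^2(\R/2\pi\Z)$ the $C^2(\overline B_1)$-norm of the $k$-th term of \eqref{eq:xif} is $\lesssim k^{-1}(\abs{a_{2k}(f)}+\abs{b_{2k}(f)})$. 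By Cauchy--Schwarz this sums to $\lesssim\norm{f}_{L^2}$, so the series converges absolutely in $C^2(\overline B_1)$; hence $\xi_f\in C^2(\overline B_1)$, and $\Delta\xi_f=0$ in $B_1$ since $\Delta\colon C^2(\overline B_1)\to C^0(\overline B_1)$ is continuous and every summand is harmonic. For the last assertion one repeats the estimate with $m=\ell+2$: the $k$-th term then has $C^{\ell+2}(\overline B_1)$-norm $\lesssim k^{\ell-1}(\abs{a_{2k}(f)}+\abs{b_{2k}(f)})$, which sums to $\lesssim\norm{f}_{H^\ell}$ by Cauchy--Schwarz together with the Sobolev characterization $\sum_k k^{2\ell}(\abs{a_k(f)}^2+\abs{b_k(f)}^2)\lesssim\norm{f}_{H^\ell}^2$.

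\textbf{Reduction of \eqref{eq:harmext} to a scalar identity.} I would use the scalar parametrization of entropies: if $\Phi\colon\mathbb S^1\to\R^2$ is $C^1$ and satisfies \eqref{cpa1.7}, then expanding $\Phi(e^{it})$ in the orthonormal frame $\{e^{it},ie^{it}\}$ and using \eqref{cpa1.7} forces $\Phi(e^{it})=a(t)e^{it}+a'(t)ie^{it}$ with $a(t):=\Phi(e^{it})\cdot e^{it}$. Applying \eqref{eq:extPhif} (only $\xi_f\in C^2(\overline B_1)$ from the first step is needed, not its harmonicity) gives $\Phi^{\xi_f}(e^{it})=\xi_f(e^{it})e^{it}+\partial_t[\xi_f(e^{it})]ie^{it}$, so the scalar representative of $\Phi^{\xi_f}$ is $\xi_f|_{\partial B_1}$; that of $\Phi_f$ (which is $C^1$ and satisfies \eqref{cpa1.7}, as is immediate from \eqref{eqplm1}--\eqref{eqplm1.5}, or is recorded in \cite{GL}) is $a_f(t):=\Phi_f(e^{it})\cdot e^{it}$; and that of $z\mapsto\Phi_f(z)+cz$ with $c$ a constant is $a_f+c$. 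Therefore \eqref{eq:harmext} is equivalent to the scalar identity $\xi_f(e^{it})=a_f(t)+2\sum_{k\geq2}b_k(f)/k$ on $[0,2\pi]$.

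\textbf{The scalar identity.} Unwinding \eqref{eqplm1} and \eqref{eq:varphi_f} with $w\cdot e^{it}=\operatorname{Re}(we^{-it})$ and $\varphi_f(t+\tfrac\pi2)-\varphi_f(t-\tfrac\pi2)=\int_{t-\pi/2}^{t+\pi/2}\psi_f(s)ie^{is}\,ds$ yields the closed form $a_f(t)=-\int_{-\pi/2}^{\pi/2}\psi_f(t+u)\cos u\,du$. By \eqref{eqplm1.5} the integrand of $\psi_f$ has no constant term and no first harmonic, hence $\psi_f(t)=\sum_{k\geq2}\frac1k\big(a_k(f)\sin kt+b_k(f)(1-\cos kt)\big)$; inserting this and using $\int_{-\pi/2}^{\pi/2}\cos(ku)\cos u\,du=\frac{-2\cos(k\pi/2)}{k^2-1}$, $\int_{-\pi/2}^{\pi/2}\sin(ku)\cos u\,du=0$ for $k\geq2$, $\int_{-\pi/2}^{\pi/2}\cos u\,du=2$, and noting $\cos(k\pi/2)=0$ for odd $k$ while $\cos(2j\pi/2)=(-1)^j$, one obtains
\begin{equation*}
a_f(t)=-2\sum_{k\geq2}\frac{b_k(f)}{k}-\sum_{j\geq1}\frac{(-1)^j}{j(4j^2-1)}\big(-a_{2j}(f)\sin2jt+b_{2j}(f)\cos2jt\big).
\end{equation*}
On the other hand, restricting \eqref{eq:xif} to $\partial B_1$ and using $(1-2j)(1+2j)=-(4j^2-1)$, $\varphi^1_{2j}(e^{it})=\sin2jt$, $\varphi^2_{2j}(e^{it})=\cos2jt$ gives $\xi_f(e^{it})=\sum_{j\geq1}\frac{(-1)^{j+1}}{j(4j^2-1)}\big(-a_{2j}(f)\sin2jt+b_{2j}(f)\cos2jt\big)$, which is exactly the second sum above with the opposite sign. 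Hence $\xi_f(e^{it})=a_f(t)+2\sum_{k\geq2}b_k(f)/k$, which proves the scalar identity and therefore \eqref{eq:harmext}.

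\textbf{Expected main obstacle.} The substantive part is the closed form for $a_f$ and the sign/parity bookkeeping coming from the two $\pm\tfrac\pi2$ shifts in \eqref{eqplm1} together with the factor $\cos(k\pi/2)$; this is where an error is most likely, so I would cross-check it against $f=\cos2t$ (then $a_f=-\tfrac13\sin2t=\xi_f|_{\partial B_1}$ and $c=0$) and $f=\sin2t$ (then $a_f=\tfrac13\cos2t-1$, $\xi_f|_{\partial B_1}=\tfrac13\cos2t$, $c=1$). A longer but more conservative route — presumably the one carried out in the paper via Lemmas~\ref{l:Phifk} and \ref{l:extPhifk} — is to compute $\Phi_{\cos kt}$, $\Phi_{\sin kt}$ and their harmonic extensions mode by mode and then sum, interchanging the sum with the continuous linear extension operation thanks to the $C^2$-convergence established in the first step.
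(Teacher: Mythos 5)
Your proof is correct, and the two convergence/harmonicity claims are handled exactly as in the paper (termwise $C^m$ bounds on $\varphi^j_{2k}$, Cauchy--Schwarz against the $\ell^2$ Fourier coefficients, continuity of $\Delta$). Where you genuinely diverge is the extension identity \eqref{eq:harmext}. The paper reduces to the Fourier modes $f=\cos(kt),\sin(kt)$ by linearity and $L^2\to C^2$ continuity of $f\mapsto\xi_f$, and then verifies the full vector identity mode by mode in Appendix~\ref{a:compharmext} (Lemmas~\ref{l:Phifk} and~\ref{l:extPhifk}), computing $\Phi_{f^j_k}(e^{it})$ and $\Phi^{\xi_{f^j_k}}(e^{it})$ as explicit combinations of $e^{i(k\pm1)t}$. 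You instead exploit the structural fact that every $C^1$ entropy satisfying \eqref{cpa1.7} has the form $a(t)e^{it}+a'(t)ie^{it}$, so the $\R^2$-valued identity collapses to the scalar identity $\xi_f(e^{it})=a_f(t)+2\sum_{k\ge2}b_k(f)/k$, which you then establish for general $f$ via the closed form $a_f(t)=-\int_{-\pi/2}^{\pi/2}\psi_f(t+u)\cos u\,du$ and the elementary integrals $\int_{-\pi/2}^{\pi/2}\cos(ku)\cos u\,du=-2\cos(k\pi/2)/(k^2-1)$. I checked your sign and parity bookkeeping; it is right, and your two sanity checks ($f=\cos2t$, $f=\sin2t$) agree with the paper's Lemmas~\ref{l:Phifk}--\ref{l:extPhifk}. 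Your route buys a shorter computation (one scalar Fourier comparison instead of two families of vector computations) and makes transparent why the linear correction term is $2\sum_{k\ge2}b_k(f)/k$ (it is the $u$-integral of the constant part $\sum b_k/k$ of $\psi_f$); the paper's route avoids invoking the entropy parametrization and keeps everything at the level of explicit exponentials. The only point worth making explicit in a final write-up is the justification for termwise integration giving the Fourier expansion of $\psi_f$ and for exchanging $\sum_k$ with $\int_{-\pi/2}^{\pi/2}$: both follow since the coefficients $(\abs{a_k}+\abs{b_k})/k$ are summable by Cauchy--Schwarz, so the series for $\psi_f$ converges uniformly; alternatively you could fall back on the same density-in-$L^2$ argument the paper uses and verify the scalar identity only on Fourier modes.
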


The proof of Proposition~\ref{p:harmext} follows from direct calculations showing its validity for the Fourier modes $f(t)=\cos(kt),\sin(kt)$, and from standard estimates on Fourier coefficients ensuring that the claimed convergence and regularity hold.

\begin{proof}[Proof of Proposition~\ref{p:harmext}]
	First notice that since 
	$z^k=\varphi^2_k(z)+i\varphi^1_k(z)$ for $z\in\mathbb{C}$, we have
	\begin{equation}
		\label{xxeqa1}
		\|\nabla^\ell\varphi^j_{k}\|_{C^{0}\lt(\overline{B}_1\rt)}\lesssim k^\ell \qquad\forall k\geq 1, 
	\end{equation}
	and since by Parseval's identity the Fourier coefficients $a_{k}(f)$, $b_k(f)$ belong to $\ell^2$, we have
	\begin{equation*}
		\left\Vert \frac{- a_{2k}(f) \varphi^1_{2k} + b_{2k}(f)\varphi^2_{2k}}{k(1-2k)(1+2k)}\right\Vert_{C^2(\overline B_1)}\overset{\eqref{xxeqa1}}{\lesssim} \frac 1 k (\abs{a_{2k}(f)}+\abs{b_{2k}(f)})\in\ell^1.
	\end{equation*}
	Hence the series \eqref{eq:xif} converges in $C^2(\overline B_1)$, and using H\"{o}lder's inequality for the following first inequality we obtain
	\begin{equation}
		\label{iteq1}
		\norm{\xi_f}_{C^2(\overline B_1)}\lesssim \norm{(a_k(f))}_{\ell^2} + \norm{(b_k(f))}_{\ell^2}\lesssim \norm{f}_{L^2(\R/2\pi\Z)}.
	\end{equation}
	As all terms of the series \eqref{eq:xif} are harmonic functions,  it follows that $\Delta\xi_f=0$ in $B_1$. 
	
	If $f\in H^\ell(\R/2\pi\Z)$ then the sequences $(k^\ell a_k(f))$, $(k^\ell b_k(f))$ belong to $\ell^2$
	since they are, up to constants, the Fourier coefficients of $f^{(\ell)}$. Thus
	\begin{equation}
		\label{iteq2}
		\left\Vert \frac{- a_{2k}(f) \varphi^1_{2k} + b_{2k}(f)\varphi^2_{2k}}{k(1-2k)(1+2k)}\right\Vert_{C^{\ell+2}(\overline B_1)}
		\overset{\eqref{xxeqa1}}{\lesssim} \frac 1 k (k^\ell\abs{a_{2k}(f)}+k^\ell\abs{b_{2k}(f)})\lesssim \lVert f^{(\ell)}\rVert_{L^2(\mathbb{R}/2\pi\mathbb{Z})},
	\end{equation}
	and the series \eqref{eq:xif} converges in $C^{\ell+2}(\overline B_1)$.
	
	It remains to prove that the harmonic entropy $\Phi^{\xi_f}$ indeed extends $\Phi_f$, i.e. we have \eqref{eq:harmext}. 
	Since we have just shown in \eqref{iteq1} that the linear map $f\mapsto\xi_f$ is continuous $L^2(\R/2\pi\Z)\to C^2(\overline B_1)$, we have in particular (recall \eqref{eq:extPhif}) that for any $z\in \mathbb S^1$ the linear map $f\mapsto\Phi^{\xi_f}(z)$ is continuous in $L^2(\R/2\pi\Z)$. By the construction of $\Phi_f$ (recalling (\ref{eqplm1}), (\ref{eq:varphi_f}), (\ref{eqplm1.5})) the two terms in the right-hand side of \eqref{eq:harmext} depend also linearly and continuously on $f\in L^2(\R/2\pi\Z)$. 
	Therefore it is sufficient to establish \eqref{eq:harmext} for $f(t)=\cos(kt),\sin(kt)$ for all $k\geq 0$. For $k\geq 2$ this follows from lengthy but direct computations, to be found in Appendix~\ref{a:compharmext}. And for $k\in\lbrace 0,1\rbrace$ it can be checked directly that both sides of \eqref{eq:harmext} vanish.
\end{proof}

%
%

\begin{lem}
	\label{LLAA8}
	Let $f\in C^2\lt(\mathbb{R}/ 2\pi \mathbb{Z}\rt)$ and $\xi_f$ be the function defined by \eqref{eq:xif}. Then
	\begin{equation}
		\label{eqoolla1}
		\AI_1\xi_{f \lfloor \mathbb{S}^1}(e^{i\theta})=\frac 12 f\lt(\theta+\frac{\pi}{2}\rt)+\frac 12 f\lt(\theta-\frac{\pi}{2}\rt)-\langle f, 1\rangle, 
	\end{equation}
	where recall that $\langle \cdot, \cdot \rangle$ is the inner product on $L^2\lt(\mathbb{R}/ 2\pi \mathbb{Z}\rt)$, i.e. $\langle f, g\rangle = \frac{1}{2\pi}\int_0^{2\pi} f\, g\, dt$.
\end{lem}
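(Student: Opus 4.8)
The plan is to restrict $\xi_f$ to $\mathbb{S}^1$, write it as an explicit Fourier series in $\theta$, apply the Fourier multiplier $\AI_1$ computed in \eqref{eqmma6}, and recognize the result as the Fourier series of the right-hand side of \eqref{eqoolla1}.

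First I would handle the regularity bookkeeping. Since $f\in C^2(\R/2\pi\Z)\subset H^2(\R/2\pi\Z)$, Proposition~\ref{p:harmext} guarantees that the series \eqref{eq:xif} defining $\xi_f$ converges in $C^{4}(\overline B_1)$; in particular $\xi_{f\lfloor\mathbb{S}^1}\in C^4(\mathbb{S}^1)$, so $\AI_1\xi_{f\lfloor\mathbb{S}^1}$ is defined and, by \eqref{eqmma6}, equals $-\tfrac12(\partial_\theta^3+\partial_\theta)$ applied to it. On $\mathbb{S}^1$ one has $\varphi^1_{2k}=\sin(2k\theta)$ and $\varphi^2_{2k}=\cos(2k\theta)$, hence
$$\xi_{f\lfloor\mathbb{S}^1}(e^{i\theta})=\sum_{k\geq1}\frac{(-1)^k}{k(1-2k)(1+2k)}\bigl(-a_{2k}(f)\sin(2k\theta)+b_{2k}(f)\cos(2k\theta)\bigr),$$
with convergence in $C^4$.

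Next I would apply $\AI_1$ termwise, which is legitimate because $\AI_1$ is continuous on $C^4(\mathbb{S}^1)$ and the series converges there. Using $(\partial_\theta^3+\partial_\theta)\sin(2k\theta)=2k(1-2k)(1+2k)\cos(2k\theta)$ and $(\partial_\theta^3+\partial_\theta)\cos(2k\theta)=-2k(1-2k)(1+2k)\sin(2k\theta)$ — here $1-4k^2=(1-2k)(1+2k)$ — the cubic-in-$k$ factor cancels exactly against the denominator in \eqref{eq:xif}, leaving
$$\AI_1\xi_{f\lfloor\mathbb{S}^1}(e^{i\theta})=\sum_{k\geq1}(-1)^k\bigl(a_{2k}(f)\cos(2k\theta)+b_{2k}(f)\sin(2k\theta)\bigr).$$

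Finally I would compare with the Fourier expansion of $f$, which converges uniformly since $f\in C^2$: writing $f(t)=\langle f,1\rangle+\sum_{j\geq1}(a_j(f)\cos(jt)+b_j(f)\sin(jt))$, the prosthaphaeresis identities give $f(\theta+\tfrac\pi2)+f(\theta-\tfrac\pi2)=2\langle f,1\rangle+2\sum_{j\geq1}\cos(\tfrac{j\pi}{2})\bigl(a_j(f)\cos(j\theta)+b_j(f)\sin(j\theta)\bigr)$; since $\cos(\tfrac{j\pi}{2})$ vanishes for odd $j$ and equals $(-1)^k$ for $j=2k$, the odd modes drop out and one obtains $\tfrac12 f(\theta+\tfrac\pi2)+\tfrac12 f(\theta-\tfrac\pi2)-\langle f,1\rangle=\sum_{k\geq1}(-1)^k\bigl(a_{2k}(f)\cos(2k\theta)+b_{2k}(f)\sin(2k\theta)\bigr)$, which is precisely the series above, establishing \eqref{eqoolla1}. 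The computation is entirely routine; the only points needing (standard) care are the regularity bookkeeping required to apply $\AI_1$ and the justification of the termwise operations, both supplied by Proposition~\ref{p:harmext} together with uniform convergence of the $C^2$ Fourier series. There is no genuine obstacle here.
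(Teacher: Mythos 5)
Your proof is correct and follows essentially the same route as the paper's: both reduce to the action of $\AI_1$ on the individual Fourier modes $\cos(2k\theta),\sin(2k\theta)$ (yours via termwise application of $-\tfrac12(\partial_\theta^3+\partial_\theta)$ to the $C^4$-convergent series, the paper's via linearity/continuity in $f$ and a mode-by-mode check using $\AI_1\varphi^1_{2k}=k(4k^2-1)\varphi^2_{2k}$), and both match the result against the Fourier expansion of $\tfrac12 f(\theta+\tfrac\pi2)+\tfrac12 f(\theta-\tfrac\pi2)-\langle f,1\rangle$ using $\cos(j\pi/2)=0$ for odd $j$ and $(-1)^k$ for $j=2k$. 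The regularity bookkeeping via Proposition \ref{p:harmext} is exactly what the paper uses as well.
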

\begin{proof}  
	Given $f\in C^2(\R/2\pi\mathbb{Z})$, it follows from the continuity of $\AI_1$ from $C^4(\mathbb{S}^1)$ to $C^0(\mathbb{S}^1)$ (recalling Lemma \ref{fmlem}) and the estimate \eqref{iteq2} that 
	\begin{equation*}
	\lVert \AI_1\xi_{f \lfloor \mathbb{S}^1}\rVert_{C^0(\mathbb{S}^1)}\lesssim  \lVert \xi_{f \lfloor \mathbb{S}^1}\rVert_{C^4(\mathbb{S}^1)}\overset{(\ref{iteq2})}{\lesssim} \lVert f\rVert_{C^2(\mathbb{R}/2\pi\mathbb{Z})}.
	\end{equation*}
	It is clear that the right-hand side of \eqref{eqoolla1} also depends continuously on $f$ in the $C^2$ topology, and both sides of \eqref{eqoolla1} are linear in $f$. Thus it suffices to show \eqref{eqoolla1} for $f=f^j_k$, $j=1,2$, $k\geq 0$, where
	\begin{equation}
		\label{f_k}
		f^1_k(t)=\cos(kt),\quad f^2_k(t)=\sin(kt),
	\end{equation}
	as the linear span of $\{f^j_k\}$ is dense in $C^2(\R/2\pi\mathbb{Z})$.
	
	Note that since $\psi_{2k}=\varphi_{2k}^2+i \varphi_{2k}^1$ and
	\begin{equation*}
		\AI_1\psi_{2k}\overset{ (\ref{eqggbba6}), (\ref{AequalB})}{=}\frac{i2k (4k^2-1)}{2}\psi_{2k}=  k(4k^2-1)\lt(-\varphi_{2k}^1+i\varphi_{2k}^2 \rt),
	\end{equation*}
	we have
	\begin{equation}
		\label{eqoolla2}
		\AI_1\varphi_{2k}^2=-k (4k^2-1)\varphi_{2k}^1
		\qd\text{ and }\qd\AI_1\varphi_{2k}^1=k(4k^2-1)\varphi_{2k}^2.
	\end{equation}
	For $f=f_{2k+1}^j(t)$, $j=1,2$, $k\geq 0$ and $f\equiv 1$,  it is clear that $\xi_f\overset{(\ref{eq:xif})}{=}0$, and thus both sides of \eqref{eqoolla1} vanish.
	From \eqref{eq:xif}, for $k\geq 1$, we have $\xi_{f_{2k}^1}=\frac{(-1)^{k+1}}{k(1-4k^2)}\varphi^1_{2k}$ and $\xi_{f_{2k}^2}=\frac{(-1)^{k}}{k(1-4k^2)}\varphi^2_{2k}$. Thus it follows from \eqref{eqoolla2} that
	\begin{align*}
		\AI_1\xi_{f_{2k}^1 \lfloor \mathbb{S}^1}(e^{i\theta})&=\frac{(-1)^{k+1}}{k(1-4k^2)}\cdot k(4k^2-1)\varphi^2_{2k}(e^{i\theta})=(-1)^k\cos(2k\theta)\\
		&=\frac 12 f_{2k}^1\lt(\theta+\frac{\pi}{2}\rt)+\frac 12 f_{2k}^1\lt(\theta-\frac{\pi}{2}\rt), 
	\end{align*} 
	and 
	\begin{align*}
		\AI_1\xi_{f_{2k}^2 \lfloor \mathbb{S}^1}(e^{i\theta})&=\frac{(-1)^{k+1}}{k(1-4k^2)}\cdot k(4k^2-1)\varphi^1_{2k}(e^{i\theta})=(-1)^k\sin(2k\theta)\\
		&=\frac 12 f_{2k}^2\lt(\theta+\frac{\pi}{2}\rt)+\frac 12 f_{2k}^2\lt(\theta-\frac{\pi}{2}\rt). 
	\end{align*} 
	Thus we have established \eqref{eqoolla1} for all Fourier modes and this concludes the proof of the lemma.
\end{proof}

\subsection{Proof of Theorem \ref{C1} completed}

\begin{proof}[Proof of Theorem \ref{C1}]
	First note that by Lemma \ref{p:controlent} and Remark \ref{R13} we have $\dv\Phi_f(m), \dv\Sigma_j(m)\in L^p_{\loc}(\Omega)$. For any $f\in C^2\lt(\mathbb{R}/ 2\pi \mathbb{Z}\rt)$, as harmonic extension is unique, we deduce from Propositions \ref{L17}, \ref{p:harmext} and Lemma \ref{LLAA8} that 
\begin{align*}
	\dv \Phi_f(m)(x)&\overset{(\ref{eq:harmext})}{=}\dv \Phi^{\xi_f}(m)(x)\nn\\
	&\overset{(\ref{eqmma5}), (\ref{eqoolla1})}{=}\lt(\frac 12 f\lt(\theta(x)+\frac{\pi}{2}\rt)+\frac 12 f\lt(\theta(x)-\frac{\pi}{2}\rt)-\langle f, 1\rangle\rt)e^{2i\theta(x)}\cdot \dv \Sigma(m)(x)
\end{align*}
for a.e. $x\in\Omega$. Integrating by parts gives
\begin{align}
	\label{eqokl30}
	&-\int_{\Omega} \Phi_f(m)(x)\cdot \na \zeta(x) \, dx\nn\\
	&\qd\qd=\int_{\Omega} \lt(\frac 12 f\lt(\theta(x)+\frac{\pi}{2}\rt)+\frac 12 f\lt(\theta(x)-\frac{\pi}{2}\rt)-\langle f, 1\rangle\rt)e^{2i\theta(x)}\cdot \dv \Sigma(m)(x)\, \zeta(x) \, dx
\end{align}
for all $f\in C^2\lt(\mathbb{R}/ 2\pi \mathbb{Z}\rt)$ and all $\zeta\in C^\infty_c(\Omega)$. For any fixed $\zeta\in C^\infty_c(\Omega)$, the left- and right-hand sides of (\ref{eqokl30}) depend continuously and linearly on $f$ in the $C^0$ topology (recalling again the construction of $\Phi_f$ from (\ref{eqplm1}), (\ref{eq:varphi_f}), (\ref{eqplm1.5})). By density 
of $C^2\lt(\mathbb{R}/ 2\pi \mathbb{Z}\rt)$ in $C^0\lt(\mathbb{R}/ 2\pi \mathbb{Z}\rt)$
it follows that (\ref{eqokl30}) holds true for all $f\in C^0\lt(\mathbb{R}/ 2\pi\mathbb{Z}\rt)$ and all $\zeta\in C_c^{\infty}(\Omega)$. As $\dv\Phi_f(m), \dv\Sigma(m)\in L^p_{\loc}(\Omega)$, we readily deduce \eqref{cpeq2} from \eqref{eqokl30}. 

Finally, to establish \eqref{abeqa1.5}, note that the existence of $\sigma$ and its disintegration $\sigma=\mathcal{L}^2\otimes \sigma_x$ follow from Lemma \ref{lem:sigmaLp} in the case $1<p<\infty$. When $p=1$, by Lemma \ref{p:controlent}, the estimate \eqref{eq621.2} holds. Given $A\subset \subset \Omega$, for any pairwise disjoint decomposition $A=\bigcup_{\alpha} A_{\alpha}$ and any choice of 
	$\{\Phi_{\alpha}\}\subset ENT$ with $\|\Phi_{\alpha}\|_{C^2(\mathbb{S}^1)}\leq 1$, it follows from \eqref{eq621.2} that 
	$\sum_{\alpha} \int_{A_{\alpha}}\lt|\dv \Phi_{\alpha}(m)\rt|dx\lesssim \int_{A} \PPI_m \, dx$. Thus it follows from \eqref{eqinta2} that $\mu_m(A)\leq \int_{A} \PPI_m\, dx$ for all $A\subset \subset \Omega$.  This implies that $\mu_m$ is absolutely continuous with respect to the Lebesgue measure and its density is bounded above by a constant multiple of $\PPI_m\in L^1_{\loc}(\Omega)$, and thus $\mu_m\in L^1_{\loc}(\Omega)$. The existence of $\sigma$ and its disintegration $\sigma=\mathcal{L}^2\otimes \sigma_x$ then follow from Lemmas \ref{p:sigmaLp} and \ref{slice}. Now putting \eqref{eqgl1.2} and \eqref{eq30.2} together gives the equivalence of \eqref{cpeq2} and \eqref{abeqa1.5}.
\end{proof}

\begin{lem} 
	\label{slice}
	Let $m:\Omega\to\R^2$ satisfy \eqref{ageq3}. Assume $\mu_m\in L^p_{\loc}(\Omega)$ for some $1\leq p<\infty$, where $\mu_m$ is given in \eqref{eqinta2}. Then there exists a family of measures $\lt\{\sigma_x\rt\}\subset \mathcal{M}\lt(\mathbb{R}/ 2\pi \mathbb{Z}\rt)$ for $\mathcal{L}^2$-a.e. $x\in\Omega$ satisfying \eqref{eq30.2} and \eqref{eq:sigmaLp}.
\end{lem}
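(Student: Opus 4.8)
\textbf{Proof proposal for Lemma \ref{slice}.} The plan is to build the disintegration fiberwise, by a pointwise Riesz representation argument. This uses nothing more than the domination $\|\dv\Phi(m)\|\le\|\Phi\|_{C^2(\mathbb{S}^1)}\mu_m$ and the representation \eqref{eqgl1.2} coming from \cite[Lemma 3.4]{GL}, so it works uniformly for all $1\le p<\infty$ and in particular avoids the reflexivity of $L^p(\Omega)$ that Lemma \ref{lem:sigmaLp} relies on. First, since $\mu_m\in L^p_{\loc}(\Omega)\subset\mathcal{M}_{\loc}(\Omega)$, the fact that $\mu_m$ is the least upper bound in \eqref{eqinta2} gives, by homogeneity, $\|\dv\Phi(m)\|\le\|\Phi\|_{C^2(\mathbb{S}^1)}\mu_m$ for every $\Phi\in ENT$; hence $\dv\Phi(m)\in L^p_{\loc}(\Omega)$ with $|\dv\Phi(m)(x)|\le\|\Phi\|_{C^2(\mathbb{S}^1)}\mu_m(x)$ for a.e.\ $x$. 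Applying this to the entropies $\Phi_f$, $f\in C^0(\mathbb{R}/2\pi\mathbb{Z})$, and noting $\|\Phi_f\|_{C^2(\mathbb{S}^1)}\lesssim\|f\|_{C^0(\mathbb{R}/2\pi\mathbb{Z})}$ (immediate from \eqref{eqplm1}--\eqref{eqplm1.5}, since only $\psi_f'$, not a derivative of $f$, enters $\Phi_f''$), we obtain $\dv\Phi_f(m)\in L^p_{\loc}(\Omega)$ and
\[
|\dv\Phi_f(m)(x)|\lesssim\|f\|_{C^0(\mathbb{R}/2\pi\mathbb{Z})}\,\mu_m(x)\qquad\text{for a.e. }x\in\Omega.
\]
In particular $\dv\Phi_f(m)\in\mathcal{M}_{\loc}(\Omega)$ for all $f$, so Lemma \ref{p:sigmaLp} (the complete proof of \cite[Lemma 3.4]{GL}) yields a kinetic measure $\sigma\in\mathcal{M}_{\loc}(\Omega\times\mathbb{R}/2\pi\mathbb{Z})$ satisfying \eqref{eqinta3} together with the representation \eqref{eqgl1.2}: $\langle\dv\Phi_f(m),\zeta\rangle=\langle\sigma,f\otimes\zeta\rangle$ for $f\in C^0(\mathbb{R}/2\pi\mathbb{Z})$ and $\zeta\in C^0_c(\Omega)$.

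Next I would construct the slices. Fix a countable $\mathbb{Q}$-linear dense subset $D\subset C^0(\mathbb{R}/2\pi\mathbb{Z})$ (e.g.\ the $\mathbb{Q}$-span of a countable dense set), and let $\Lambda\subset\Omega$ be the full-measure set of points $x$ that are Lebesgue points of $\mu_m$ and of $\dv\Phi_g(m)$ for every $g\in D$, with $\mu_m(x)<\infty$ and the displayed pointwise bound valid for every $g\in D$. For $x\in\Lambda$ set $L_x(g):=\dv\Phi_g(m)(x)$ on $D$; by linearity of $g\mapsto\dv\Phi_g(m)$ and the displayed bound, $L_x$ is $\mathbb{Q}$-linear with $|L_x(g)|\le C\mu_m(x)\|g\|_{C^0}$, so it extends uniquely to a bounded real-linear functional on $C^0(\mathbb{R}/2\pi\mathbb{Z})$, i.e.\ there is a unique $\sigma_x\in\mathcal{M}(\mathbb{R}/2\pi\mathbb{Z})$ with $\int f\,d\sigma_x=L_x(f)$ and $\nu(x):=\|\sigma_x\|_{\mathcal{M}(\mathbb{R}/2\pi\mathbb{Z})}\le C\mu_m(x)$. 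Since $x\mapsto\dv\Phi_g(m)(x)$ is measurable for each $g\in D$, $x\mapsto\sigma_x$ is weakly-$*$ measurable, and $\nu\le C\mu_m\in L^p_{\loc}(\Omega)$ is exactly \eqref{eq:sigmaLp}. It remains to identify $\mathcal{L}^2\otimes\sigma_x$ with $\sigma$: for $f\in D$ one has $\langle\sigma,f\otimes\zeta\rangle=\langle\dv\Phi_f(m),\zeta\rangle=\int_\Omega\dv\Phi_f(m)\,\zeta\,dx=\int_\Omega(\int f\,d\sigma_x)\zeta\,dx$; this extends to all $f\in C^0(\mathbb{R}/2\pi\mathbb{Z})$ by density of $D$, using the continuity estimates $|\langle\dv\Phi_{g-f}(m),\zeta\rangle|\lesssim\|g-f\|_{C^0}\int|\zeta|\,d\mu_m$ on the left and $\big|\int(\int(g-f)\,d\sigma_x)\zeta\big|\le\|g-f\|_{C^0}\int|\zeta|\nu\,dx$ on the right (both finite since $\mu_m\in L^1_{\loc}$); and then to all $\psi\in C_c(\Omega\times\mathbb{R}/2\pi\mathbb{Z})$, since finite sums $\sum_i f_i\otimes\zeta_i$ are dense in the sup norm in $C_c(K\times\mathbb{R}/2\pi\mathbb{Z})$ for each $K\subset\subset\Omega$ (Stone--Weierstrass) and both sides are sup-norm continuous there ($\sigma\in\mathcal{M}_{\loc}$, resp.\ $\nu\in L^1_{\loc}$). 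Hence $\sigma=\mathcal{L}^2\otimes\sigma_x$, which is \eqref{eq30.2}, and the proof is complete.

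The main obstacle is genuinely the case $p=1$: there one cannot invoke Lemma \ref{lem:sigmaLp}, whose proof uses reflexivity of $L^p(\Omega)$ and the generalized Riesz representation theorem of \cite{diest} to realize $\sigma$ as an $L^p$-valued object, so the disintegration and the $L^p$ bound on $\nu$ must be produced by hand. The fiberwise construction above does exactly this; the only delicate points are (i) choosing a \emph{countable} dense set $D$ so that the pointwise inequalities $|\dv\Phi_g(m)(x)|\lesssim\|g\|_{C^0}\mu_m(x)$ hold simultaneously for all relevant $g$ on one common full-measure set of $x$, and (ii) the density/continuity bookkeeping needed to pass from the tensor test functions $f\otimes\zeta$ (for which \eqref{eqgl1.2} is available) to general test functions on $\Omega\times\mathbb{R}/2\pi\mathbb{Z}$.
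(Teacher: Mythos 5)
Your proposal is correct and follows essentially the same route as the paper's Appendix C proof: pointwise domination $|\dv\Phi_f(m)(x)|\lesssim \|f\|_{C^0}\mu_m(x)$, a countable dense family of $f$'s to get a common full-measure set of Lebesgue points, fiberwise Riesz representation to produce $\sigma_x$ with $\|\sigma_x\|\le C\mu_m(x)$, and a density/dominated-convergence argument to identify $\mathcal{L}^2\otimes\sigma_x$ with the measure from Lemma \ref{p:sigmaLp}. The only differences are cosmetic refinements on your side (taking a $\mathbb{Q}$-linear dense set so that linearity of $L_x$ is automatic, and the Stone--Weierstrass extension to general test functions, which goes slightly beyond what \eqref{eq30.2} requires).
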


The above result is a fairly standard application of disintegration/slicing for the measure $\sigma$ constructed in Lemma \ref{p:sigmaLp} (or Lemma \ref{lem:sigmaLp} when $p>1$ and in this case the disintegration of $\sigma$ is already included in Lemma \ref{lem:sigmaLp}). By showing that the push-forward of $\sigma$ into $\Omega$ via the projection onto $\Omega$ is absolutely continuous under the assumption $\mu_m\in L^p_{\loc}$, it could  also be 
deduced from \cite[Theorem 2.28]{ambrosio}. However since establishing this is almost as involved as a direct proof we choose the latter and include the proof in Appendix \ref{a:sigmaLp}.

\section{Proof of Theorem \ref{C0}} 

The proof of Theorem \ref{C0} under the assumption \eqref{eqhyp1} is a direct application of Theorem \ref{C2}. Under the assumption \eqref{eqhyp2}, on the other hand, the proof makes use of the following

\begin{prop}
	\label{LLD1}
	Let $m$ satisfy \eqref{ageq3} and \eqref{eqhyp2}. Then $m\in B^{\frac{1}{3}}_{4,\infty,\loc}(\Omega)$.
\end{prop}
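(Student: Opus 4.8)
The plan is to follow the scheme of the proof of Proposition~\ref{LL1} (the implication $(C)\Longrightarrow(A)$ of Theorem~\ref{C2}), specialized to the endpoint exponent $\alpha=1$ (so $3+\alpha=4$), but with the full kinetic measure $\sigma$ replaced throughout by the two Jin--Kohn entropy productions $\dv\Sigma_1(m),\dv\Sigma_2(m)$, which by \eqref{eqhyp2} lie in $L^{4/3}_{\loc}(\Omega)$ since $p\geq\tfrac43$. The point that makes this replacement possible precisely at $\alpha=1$ is that one may take the profile $\varphi_1(t)=\tfrac12\sin(2t)$, which is odd, $\pi$-periodic, positive on $(0,\pi/2)$, smooth, and satisfies $\varphi_1(t)=t+O(t^3)$ near $0$; the coercivity Lemma~\ref{LB16} then applies verbatim. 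In fact, using $\int_0^{2\pi}\cos(t)\mathds{1}_{e^{it}\cdot m(x)>0}\,dt=2m_1(x)$, $\int_0^{2\pi}\cos(3t)\mathds{1}_{e^{it}\cdot m(x)>0}\,dt=-\tfrac23\cos(3\theta(x))$ (and the analogous identities with $\sin$), one gets the closed form
\begin{equation*}
\Delta_1(x,h,e)=\lt|D^{he}m(x)\rt|^2-\tfrac19\lt|D^{he}\big(e^{3i\theta}\big)(x)\rt|^2,
\end{equation*}
where $e^{3i\theta}$ denotes the pointwise cube of $m$ under $\R^2\cong\mathbb{C}$, and from the elementary identity $\sin^2\beta-\tfrac19\sin^2(3\beta)=\tfrac{8}{9}\sin^4\beta\,(3-2\sin^2\beta)\geq\tfrac89\sin^4\beta$ one obtains $\Delta_1(x,h,e)\geq\tfrac29|D^{he}m(x)|^4$.

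With this choice $\varphi_1'(t)=\cos(2t)$ is a \emph{second} Fourier mode. Carrying out the computation of \cite[Lemma~3.9]{GL} leading to the identity $\partial_h\Delta_1^\e=I_1^\e+\na\cdot A_1^\e$ (cf.~\eqref{partial_h}), the inner angular integral in the source term becomes, for each fixed $t$, a function of the form $\cos(2t)\,a_\e(x)+\sin(2t)\,b_\e(x)$; hence $I_1^\e$ only pairs the kinetic data against second Fourier modes, and by Remark~\ref{R13} (equivalently \cite[equations (25)--(26)]{llp}) the relevant second-mode quantities are precisely $\dv\Sigma_1(m_\e),\dv\Sigma_2(m_\e)$, up to normalization. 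Passing to the limit $\e\to0$ along a subsequence --- using $\dv\Sigma_j(m_\e)\rightharpoonup\dv\Sigma_j(m)$ together with the equiintegrability argument of Lemma~\ref{lembite}, exactly as in Step~1 of the proof of Proposition~\ref{LL1} --- and then repeating the estimates of Lemmas~\ref{lem:I_alpha} and \ref{lem:A_alpha} with $\nu$ replaced by $|\dv\Sigma_1(m)|+|\dv\Sigma_2(m)|$ and with $\alpha=1$ (so that \eqref{cpeq14.4}, \eqref{cpeq14.6} are used with Lipschitz moduli), one arrives at
\begin{align*}
\lt|\int_\Omega I_1(x,h)\,\gamma(x)\,dx\rt|&\leq C\lt(\big\lVert\gamma\big(|D^hm|+|D^{-h}m|\big)\big\rVert_{L^4(\Omega)}+\lVert\na\gamma\rVert_{L^\infty}|h|\rt)\big\lVert\,|\dv\Sigma_1(m)|+|\dv\Sigma_2(m)|\,\big\rVert_{L^{4/3}(\Omega'')},\\
\lt|\int_\Omega A_1(x,h)\cdot\na\gamma(x)\,dx\rt|&\leq C\int_\Omega|D^hm|\,|\na\gamma|\,dx,
\end{align*}
for nested domains $U\subset\subset\Omega'\subset\subset\Omega''\subset\subset\Omega$ and cutoff $\gamma$ as in the proof of Proposition~\ref{LL1}.

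Inserting these into $\int_\Omega\gamma\Delta_1(x,h)\,dx=\int_0^h\int_\Omega\gamma I_1\,dx\,d\tilde h-\int_0^h\int_\Omega\na\gamma\cdot A_1\,dx\,d\tilde h$ and using the coercivity, one runs the Young's-inequality-and-absorption argument of Proposition~\ref{LL1}: here $\alpha p'=3+\alpha=4$ with $p=\tfrac43$, $p'=4$, so $\lVert\gamma|D^hm|\rVert_{L^4}^4\leq\int_\Omega\gamma|D^hm|^4$ is absorbed on the left, leaving the self-improving inequality $\sup_{|h|\leq\xi}\int_\Omega\gamma|D^hm|^4\,dx\lesssim\xi^{4/3}+\xi\sup_{|\tilde h|\leq\xi}\int_{\Omega'}|D^{\tilde h}m|\,dx$. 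Using $m\in B^{\frac13}_{3,\infty,\loc}(\Omega)$ to bound $\int_{\Omega'}|D^{\tilde h}m|\lesssim|\tilde h|^{1/3}$, one concludes $\sup_{|h|\leq\xi}\int_U|D^hm|^4\,dx\lesssim\xi^{4/3}$, i.e.\ $m\in B^{\frac13}_{4,\infty,\loc}(\Omega)$ (larger $\xi$ being handled by the boundedness of $m$).

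The main obstacle is twofold. First, one must verify carefully that with $\varphi_1'=\cos(2\cdot)$ the source term in $\partial_h\Delta_1^\e$ genuinely reduces, after extracting pure $\partial_{x_1}$-divergences into $A_1^\e$, to a combination of $\dv\Sigma_1(m_\e)$ and $\dv\Sigma_2(m_\e)$ alone --- rather than requiring the existence of a kinetic measure --- and then justify the limit $\e\to0$ in this reduced form; this is the analog of Step~1 and Lemma~\ref{lem:I_alpha} in the present ``partial'' setting. Second, the final step uses the a priori regularity $m\in B^{\frac13}_{3,\infty,\loc}(\Omega)$, which does \emph{not} follow from \cite[Theorem~2.6]{GL} directly here since only two entropy productions are controlled; it must itself be extracted from \eqref{eqhyp2}, e.g.\ by first running the same $\Delta_1$ scheme with only the crude bound $\int_{\Omega'}|D^hm|\lesssim|h|$ (valid from $m\in L^\infty$) to obtain $m\in B^{s}_{4,\infty,\loc}(\Omega)$ for every $s<\tfrac13$, hence $\PPI_m^\e$ bounded in $L^q_{\loc}(\Omega)$ for every $q<\tfrac43$, which via a Lemma~\ref{p:controlent}-type argument and \cite[Theorem~2.6]{GL} upgrades to $m\in B^{\frac13}_{3,\infty,\loc}(\Omega)$, or alternatively via the lower bound $|e^{i2\theta}\cdot\dv\Sigma(m)|\geq\pi^{-5}\PPI_m$ noted in the introduction.
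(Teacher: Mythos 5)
Your $\Delta_1$-scheme is a genuinely different route from the paper's, which proves Proposition \ref{LLD1} in a few lines via the div--curl (interaction) inequality of \cite[Lemma 4.2]{GL} applied to $E=\chi D^h\Sigma_1(m)$, $B=\chi D^h\Sigma_2(m)$: by \cite[Lemma 7]{LP} one has $E\wedge B\gtrsim \chi^2|D^hm|^4$, while the right-hand side is $\lesssim \|\chi D^h\Sigma(m)\|_{L^4}\big(1+\|\dv\Sigma(m)\|_{L^{4/3}(\Omega'')}\big)|h| \lesssim \big(\int\chi^2|D^hm|^4\big)^{1/4}|h|$, so dividing through gives the endpoint bound $\big(\int_U|D^hm|^4\big)^{3/4}\lesssim|h|$ directly, with no kinetic measure, no bootstrap, and no a priori regularity. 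Your closed form $\Delta_1(x,h,e)=|D^{he}m(x)|^2-\tfrac19|D^{he}(e^{3i\theta})(x)|^2\geq\tfrac29|D^{he}m(x)|^4$ is correct and elegant, but the rest of your argument has two genuine gaps.

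First, the identity \eqref{partial_h} is derived in \cite[Lemma 3.9]{GL} \emph{from the kinetic equation}, which under \eqref{eqhyp2} you do not have; your heuristic that only second Fourier modes of the kinetic datum enter when $\varphi_1'=\cos(2\cdot)$ is the right idea, but rewriting the source term as a combination of $\dv\Sigma_1(m_\e),\dv\Sigma_2(m_\e)$ plus exact $x$-divergences, and passing to the limit, is exactly the content that must be proved, and you have only flagged it. Second, and more seriously, the endpoint exponent requires the input $\int_{\Omega'}|D^{\tilde h}m|\,dx\lesssim|\tilde h|^{1/3}$, i.e.\ $m\in B^{1/3}_{3,\infty,\loc}$, which is \emph{not} available from \eqref{eqhyp2}: only two entropy productions are controlled, so \cite[Theorem 2.6]{GL} does not apply, and your fallbacks do not close. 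From $m\in L^\infty$ one only gets $\int_{\Omega'}|D^hm|\lesssim 1$ (the bound $\lesssim|h|$ you assert would need $m\in W^{1,1}$ or $BV$); the resulting iteration $s\mapsto\min\{1/3,(1+s)/4\}$ converges to $1/3$ without reaching it; and $m\in B^{s}_{4,\infty,\loc}$ for all $s<1/3$ does \emph{not} make $\PPI_m^\ep$ bounded in any $L^q_{\loc}$, $q\geq 1$ --- the computation in Lemma \ref{lemp6} gives $\|\PPI_m^\ep\|_{L^q}\lesssim \ep^{3s-1}\to\infty$ for $s<1/3$. The alternative lower bound $|e^{i2\theta}\cdot\dv\Sigma(m)|\geq\pi^{-5}\PPI_m$ is itself only asserted in the paper under regularity you do not yet possess, so invoking it here is circular. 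As written, your argument yields at best $m\in B^{s}_{4,\infty,\loc}(\Omega)$ for every $s<1/3$, not the claimed endpoint.
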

\begin{proof}
	The proof relies on the div-curl inequality
	\begin{equation}\label{eq:divcurl2}
		\abs{\int E\wedge B}\lesssim pp'\lt( \norm{E}_{L^{p'}}\norm{\dv B}_{W^{-1,p}}+\norm{B}_{L^{p'}}\norm{\dv E}_{W^{-1,p}}\rt),
	\end{equation}
	valid for all $p\in (1,\infty)$ and compactly supported bounded vector fields $E,B\colon\R^2\to\R^2$; see \cite[Lemma 4.2]{GL}. Given $U\subset\subset\Omega'\subset\subset\Omega''\subset\subset\Omega$ and $h\in\R^2$ with $|h|<\mathrm{dist}\{\Omega',\partial\Omega''\}$, we apply the estimate \eqref{eq:divcurl2} to
	\begin{equation*}
		E=\chi D^h\Sigma_1(m),\quad B=\chi D^h\Sigma_2(m),
	\end{equation*}
	where $\chi\in C^{\infty}_c(\Omega')$ is a cut-off function with $0\leq \chi\leq 1$ and $\chi\equiv 1$ on $\overline U$. First by \cite[Lemma 7]{LP} and noting that 
	$m_1=\Sigma_1\lt(m\rt)\cdot e_2- \Sigma_2\lt(m\rt)\cdot e_1$ and 
	$m_2=\Sigma_1\lt(m\rt)\cdot e_1+ \Sigma_2\lt(m\rt)\cdot e_2$, we have 
	\begin{equation*}
		E\wedge B \gtrsim  \chi^2 \abs{D^h \Sigma }^4    \gtrsim \chi^2 \abs{D^h m}^4,
	\end{equation*}
	where recall that $\Sigma=\lt(\Sigma_1,\Sigma_2\rt)$. 
	
	Next we wish to estimate the right-hand side of (\ref{eq:divcurl2}). Note that the two terms are symmetrical so it is 
	enough to estimate $\norm{E}_{L^{p'}}\norm{\dv B}_{W^{-1,p}}$. We start with $\norm{\dv B}_{W^{-1,p}}$. Given any test function $\zeta\in C^\infty_c(\Omega')$ it holds
	\begin{equation}
		\label{eqand20}
		-\int_{\Omega}B\cdot\na \zeta\,dx =\int_{\Omega} \na \chi \cdot D^h \Sigma_2 (m)\, \zeta \,dx
		+ \int_{\Omega}\chi \dv (D^h \Sigma_2 (m))\, \zeta \,dx.
	\end{equation}
	Now estimating each term we have 
	\begin{align*}
		\int_{\Omega}\chi \dv (D^h \Sigma_2 (m))\, \zeta \,dx & = \int_{\Omega} \dv\Sigma_2(m)\,D^{-h}(\chi\zeta)\,dx \nn\\
		&\leq \norm{\dv\Sigma_2(m)}_{L^p(\Omega'')}\norm{D^{-h}(\chi\zeta)}_{L^{p'}(\Omega)}\nn\\
		& \lesssim  \norm{\dv\Sigma_2(m)}_{L^p(\Omega'')}\norm{\chi}_{C^1}\norm{\zeta}_{W^{1,p'}(\Omega')}\abs{h},\nn
	\end{align*}
	which shows that
	\begin{equation}
		\label{eqand10}
		\norm{\chi\dv \lt(D^h\Sigma_2(m)\rt)}_{W^{-1,p}(\Omega')}\lesssim \norm{\chi}_{C^1}\norm{\dv\Sigma_2(m)}_{L^p(\Omega'')}\abs{h}.
	\end{equation}
	The other term of the right-hand side of \eqref{eqand20} can be estimated similarly:
	\begin{align*}
		\int_{\Omega} \na \chi \cdot D^h \Sigma_2 (m)\, \zeta \,dx & = \int_{\Omega} \Sigma_2\lt(m\rt)\cdot D^{-h}(\zeta\nabla\chi)\,dx\nn\\
		&\lesssim \norm{\Sigma_2(m)}_{L^\infty(\Omega)}\norm{\nabla(\zeta\nabla\chi)}_{L^{p'}(\Omega)}\abs{h}\nn\\
		&\lesssim \norm{\chi}_{C^2}\norm{\zeta}_{W^{1,p'}(\Omega')}\abs{h},
	\end{align*}
	which gives
	\begin{equation}
		\label{eqand11}
		\norm{\na\chi\cdot D^h\Sigma_2(m)}_{W^{-1,p}(\Omega')}\lesssim \norm{\chi}_{C^2}\abs{h}.
	\end{equation}
	Thus 
	\begin{equation*}
		\|\dv B\|_{W^{-1,p}(\Omega')}\overset{(\ref{eqand20}),(\ref{eqand10}), (\ref{eqand11})}{\lesssim} \lt(1+\norm{\dv\Sigma_2(m)}_{L^p(\Omega'')}  \rt)\norm{\chi}_{C^2}\abs{h}.
	\end{equation*}
	Gathering the above, and using the notation $\Sigma=(\Sigma_1,\Sigma_2)$, we obtain from \eqref{eq:divcurl2} that
	\begin{align}\label{eq:divcurl_estim2}
		\int_{\Omega} \chi^2 \abs{D^h m}^4\,dx &\lesssim pp'\norm{\chi}_{C^2}\norm{\chi D^h\Sigma(m)}_{L^{p'}(\Omega)}\lt(1+\norm{\dv\Sigma(m)}_{L^p(\Omega'')}\rt)\abs{h}.
	\end{align}
	Let us take $p=\frac{4}{3}$ and hence $p'=4$. Since $\chi\leq 1$ we have $\lt|\chi\rt|^4\leq \lt|\chi\rt|^2$. Further since $\Sigma$ is smooth we have that $\lt|D^h \Sigma\lt(m\rt)\rt|\lesssim \lt| D^h  m\rt|$.
	Thus
	\begin{equation*}
		\lt|\chi(x) D^h \Sigma\lt(m(x) \rt) \rt|^4\lesssim \lt(\chi(x) \rt)^2 \lt|D^h m(x)  \rt|^4\qd\text{ for a.e. } x
	\end{equation*}
	and it follows that
	\begin{equation*}
		\|\chi D^h \Sigma\lt(m \rt) \|_{L^4(\Omega)}\lesssim \lt(\int_\Omega \chi^2 \lt|D^h m  \rt|^4 dx\rt)^{\frac{1}{4}}.
	\end{equation*}
	Applying this to (\ref{eq:divcurl_estim2}), dividing through and using $\chi\equiv 1$ on $\overline U$ gives 
	\begin{align*}
		\lt(\int_{U} \abs{D^h m}^4\,dx\rt)^{\frac 3 4}&\lesssim \norm{\chi}_{C^2}\lt(1+\norm{\dv\Sigma(m)}_{L^{\frac 43}(\Omega'')}\rt)\abs{h}.
	\end{align*}
	This shows that $m\in B^{\frac 13}_{4,\infty}(U)$ for all $U\subset\subset\Omega$ and thus completes the proof.
\end{proof}

\begin{proof}[Proof of Theorem \ref{C0}]
	If \eqref{eqhyp1} holds, then we have  $m\in B^{\frac{1}{3}}_{3r,\infty,\loc}(\Omega)$ for $r=\min\{p, \frac 43\}$ from Theorem \ref{C2}. On the other hand, if \eqref{eqhyp2} holds, then $m\in B^{\frac{1}{3}}_{4,\infty,\loc}(\Omega)$ by Proposition \ref{LLD1}. Thus, under the assumptions \eqref{eqhyp1} or \eqref{eqhyp2}, we have $m\in B^{\frac{1}{3}}_{3r,\infty,\loc}(\Omega)$ for $r=\min\{p, \frac 43\}$. From Lemma \ref{lemp6}, we know \eqref{cpeq1.23} holds in $L^r_{\loc}(\Omega)$, and thus it follows from Theorem \ref{C1} that \eqref{cpeq2} and \eqref{abeqa1.5} hold. Further, under the assumption \eqref{eqhyp2}, we have $\dv\Phi_f(m)\in L^p_{\loc}(\Omega)$ because of the formula \eqref{cpeq2}. This completes the proof.
\end{proof}

%
%

\appendix

\section{Computations needed in the proof of Proposition~\ref{p:harmext}}
\label{a:compharmext}

In this appendix we check that \eqref{eq:harmext} holds for $f=f^j_k$ given in \eqref{f_k}, $j=1,2$, $k\geq 2$.

\begin{lem}\label{l:Phifk}Let $k\geq 2$.
	For $f=f^j_k$, the entropies $\Phi_f$ defined in \eqref{eqplm1} satisfy
	\begin{align*}
		\Phi_{f^1_k}(e^{it})&=\frac{i\cos(k\frac\pi 2)}{k}\left[
		\frac{e^{i(k+1)t}}{k+1}
		+ \frac{e^{-i(k-1)t}}{k-1}
		\right],\\
		\Phi_{f^2_k}(e^{it})&=-\frac{2}{k}e^{it} + \frac{\cos(k\frac\pi 2)}{k}
		\left[ \frac{e^{i(k+1)t}}{k+1}-\frac{e^{-i(k-1)t}}{k-1}
		\right].
	\end{align*}
\end{lem}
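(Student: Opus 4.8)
\textbf{Proof plan for Lemma~\ref{l:Phifk}.} The plan is to compute $\Phi_{f^j_k}(e^{it})$ directly from the definitions \eqref{eqplm1}--\eqref{eqplm1.5} by tracking Fourier modes. First I would record the elementary Fourier data: for $f=f^1_k=\cos(k\cdot)$ with $k\geq 2$ one has $\langle f,1\rangle=\langle f,\cos\rangle=\langle f,\sin\rangle=0$, so \eqref{eqplm1.5} simplifies to $\psi_{f^1_k}(t)=\int_0^t\cos(ks)\,ds=\frac{\sin(kt)}{k}$; similarly for $f=f^2_k=\sin(k\cdot)$, $k\geq 2$, we get $\psi_{f^2_k}(t)=\int_0^t\sin(ks)\,ds=\frac{1-\cos(kt)}{k}$. (The constant term $-\frac1k$ here is what will ultimately produce the $-\frac2k e^{it}$ linear term.) Then, using $ie^{is}=-\sin s + i\cos s$, I would integrate \eqref{eq:varphi_f} mode by mode: $\varphi_f(t)=\int_0^t\psi_f(s)ie^{is}\,ds$. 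For $\psi=\frac{\sin(ks)}{k}$ one writes $\sin(ks)e^{is}$ in exponentials $\frac{1}{2ik}\bigl(e^{i(k+1)s}-e^{-i(k-1)s}\bigr)$ and integrates; for $\psi=\frac{1-\cos(ks)}{k}$ one does the same with the extra constant piece $\frac1k e^{is}$, whose primitive is $\frac{1}{ik}e^{is}$ up to a constant.

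\textbf{Assembling $\Phi_f$.} Next I would substitute into \eqref{eqplm1}, $\Phi_f(e^{it})=-i\varphi_f(t-\tfrac\pi2)+i\varphi_f(t+\tfrac\pi2)$. The key simplifying observation is that on each Fourier mode $e^{int}$ appearing in $\varphi_f$, the operation $\varphi(t)\mapsto -i\varphi(t-\tfrac\pi2)+i\varphi(t+\tfrac\pi2)$ acts as multiplication by $-i e^{-in\pi/2}+ie^{in\pi/2}=-2\sin(n\pi/2)$, which vanishes for even $n$ and equals $\mp2$ for $n\equiv\pm1\pmod4$. So I would expand $\varphi_{f^1_k}$ and $\varphi_{f^2_k}$ into modes $e^{i(k+1)t}$, $e^{-i(k-1)t}$, $e^{it}$ (and any constant), apply this multiplier, and collect terms. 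The constant-of-integration ambiguity in $\varphi_f$ is harmless because it contributes only a mode-$0$ term, which is annihilated by the multiplier. One then reads off that the $e^{i(k+1)t}$ and $e^{-i(k-1)t}$ coefficients combine, via $\sin((k\pm1)\pi/2)=\pm\cos(k\pi/2)$, into exactly the stated expressions, and that for $f^2_k$ the leftover $\frac1k e^{it}$-piece, after the multiplier $-2\sin(\pi/2)=-2$, gives the claimed $-\frac2k e^{it}$ term while for $f^1_k$ no such $e^{it}$-mode is present.

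\textbf{Where the work is.} There is no conceptual obstacle here; the lemma is a bookkeeping computation. The main nuisance will simply be keeping the trigonometric identities and the various $\frac{1}{k}$, $\frac{1}{k\pm1}$ factors straight when expanding $\sin(ks)e^{\pm is}$ and integrating, and being careful that $k\geq 2$ guarantees $k-1\neq 0$ so the division in $\frac{e^{-i(k-1)t}}{k-1}$ is legitimate (the cases $k\in\{0,1\}$ genuinely behave differently and are handled separately in the main proof). I would organize the computation as two short displayed calculations, one for $f^1_k$ and one for $f^2_k$, each proceeding: (i) compute $\psi_f$; (ii) compute $\varphi_f$ in exponential form; (iii) apply the half-period-shift multiplier mode by mode; (iv) simplify using $\sin((k\pm1)\pi/2)=\pm\cos(k\pi/2)$ to reach the stated formulas. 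A final remark would note that since every mode present has $|n|\in\{1,k-1,k+1\}$ and $k\geq2$, all denominators are nonzero and the formulas are well-defined.
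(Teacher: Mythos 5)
Your proposal is correct and follows essentially the same route as the paper's proof: compute $\psi_f$ and then $\varphi_f$ explicitly in exponential form, and apply the shift operation $-i\varphi_f(t-\tfrac\pi2)+i\varphi_f(t+\tfrac\pi2)$, which you rightly observe acts on each mode $e^{int}$ as multiplication by $-2\sin(n\pi/2)$ (so constants of integration drop out and $\sin((k\pm1)\pi/2)=\pm\cos(k\pi/2)$ produces the stated coefficients). Only minor bookkeeping slips appear in your parentheticals — the constant term of $\psi_{f^2_k}$ is $+\tfrac1k$, not $-\tfrac1k$, and the relevant primitive of $\tfrac1k\, ie^{is}$ is $\tfrac1k e^{is}$ — neither of which affects the final coefficients you report.
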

\begin{proof}[Proof of Lemma~\ref{l:Phifk}]
	For $f=f^1_k$ we have
	\begin{align*}
		\psi_f(t)&=\int_0^t\cos(ks)\,ds =\frac{1}{k}\sin(kt),\\
		\varphi_f(t)&=\frac 1k \int_0^t \sin(ks)ie^{is}\,ds 
		=\frac{1}{2k}\int_0^t (e^{iks}-e^{-iks})e^{is}\, ds\\
		&=\frac{1}{2k}\int_0^t(e^{i(k+1)s}-e^{-i(k-1)s})\, ds
		=\frac{1}{2k}\left[\frac{e^{i(k+1)t}-1}{i(k+1)} + \frac{e^{-i(k-1)t}-1}{i(k-1)}
		\right],
	\end{align*}
	\begin{align*}
		\Phi_f(e^{it})&=-i\varphi_f(t-\frac\pi 2) +i\varphi_f(t+\frac\pi 2)\\
		&= \frac{1}{2k}\left[
		\frac{e^{i(k+1)t}}{k+1}
		(-e^{-i(k+1)\frac\pi 2}+e^{i(k+1)\frac\pi 2}) 
		+ \frac{e^{-i(k-1)t}}{k-1}
		(-e^{i(k-1)\frac\pi 2} +e^{-i(k-1)\frac\pi 2})
		\right] \\
		& =  \frac{i\cos(k\frac\pi 2)}{k}\left[
		\frac{e^{i(k+1)t}}{k+1}
		+ \frac{e^{-i(k-1)t}}{k-1}
		\right].
	\end{align*}
	For $f=f^2_k$ we have
	\begin{align*}
		\psi_f(t)&=\int_0^t\sin(ks)\, ds = \frac 1k (1-\cos(kt)),\\
		\varphi_f(t)&=\frac{1}{k}\int_0^t(1-\cos(ks))ie^{is}\,ds 
		= \frac{i}{2k}\int_0^t (2e^{is}-e^{i(k+1)s}-e^{-i(k-1)s})\, ds \\
		& =\frac{1}{2k}\left[
		2(e^{it}-1) -\frac{e^{i(k+1)t}-1 }{ k+1} 
		+ \frac{e^{-i(k-1)t}-1}{ k-1}
		\right],
	\end{align*}
	\begin{align*}
		\Phi_f(e^{it})&=-i\varphi_f(t-\frac\pi 2) +i\varphi_f(t+\frac\pi 2)\\
		&=\frac{i}{2k}\Bigg[
		2e^{it}(-e^{-i\frac\pi 2}+e^{i\frac\pi 2})\\
		&\quad -\frac{e^{i(k+1)t}}{k+1}(-e^{-i(k+1)\frac\pi 2}+e^{i(k+1)\frac\pi 2}) + \frac{e^{-i(k-1)t}}{k-1}(-e^{i(k-1)\frac\pi 2}+e^{-i(k-1)\frac\pi 2})
		\Bigg] \\
		& = -\frac{2}{k}e^{it} + \frac{\cos(k\frac\pi 2)}{k}
		\left[ \frac{e^{i(k+1)t}}{k+1}-\frac{e^{-i(k-1)t}}{k-1}
		\right].
	\end{align*}
\end{proof}

\begin{lem}\label{l:extPhifk}
	Let $k\geq 2$.
	For $f=f^j_k$, the harmonic entropies $\Phi^{\xi_f}$ defined in \eqref{eq:extPhif} satisfy
	\begin{align*}
		\Phi^{\xi_{f^1_{k}}}(e^{it})&=\frac{i\cos(k\frac\pi 2)}{k}\left[
		\frac{e^{i(k+1)t}}{k+1}
		+ \frac{e^{-i(k-1)t}}{k-1}
		\right]  = \Phi_{f^1_k}(e^{it}),\\
		\Phi^{\xi_{f^2_{k}}}(e^{it})&=\frac{\cos(k\frac\pi 2)}{k}
		\left[ \frac{e^{i(k+1)t}}{k+1}-\frac{e^{-i(k-1)t}}{k-1}
		\right] = \Phi_{f^2_k}(e^{it}) + \frac{2}{k}e^{it}.
	\end{align*}
\end{lem}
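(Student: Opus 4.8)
The plan is to reduce the claim to a clean formula for harmonic entropies restricted to $\partial B_1$, and then to a finite computation with the two Fourier modes. First I would record the following elementary identity: for any \emph{real-valued} $\varphi\in C^1(\overline B_1)$, setting $z=e^{it}$ and $g(t):=\varphi(e^{it})$, the chain rule (using $\tfrac{d}{dt}e^{it}=ie^{it}$) gives $(iz)\cdot\nabla\varphi(z)=g'(t)$ at $z=e^{it}$, so that by \eqref{eq:extPhif}
\begin{equation*}
\Phi^{\varphi}(e^{it})=g(t)\,e^{it}+g'(t)\,ie^{it}=e^{it}\bigl(g(t)+ig'(t)\bigr).
\end{equation*}
Since $\varphi\mapsto\Phi^\varphi$ is linear, it then suffices to compute $\Phi^{\varphi^1_{2m}}$ and $\Phi^{\varphi^2_{2m}}$ on $\partial B_1$, where on the circle $\varphi^1_{2m}(e^{it})=\sin(2mt)$ and $\varphi^2_{2m}(e^{it})=\cos(2mt)$. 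Writing $\sin(2mt),\cos(2mt)$ in exponential form, using $e^{it}e^{\pm2mit}=e^{i(2m\pm1)t}$ and $\tfrac1{2i}=-\tfrac i2$, and collecting the two frequencies, one finds
\begin{align*}
\Phi^{\varphi^1_{2m}}(e^{it})&=i\,\tfrac{2m-1}{2}\,e^{i(2m+1)t}+i\,\tfrac{2m+1}{2}\,e^{-i(2m-1)t},\\
\Phi^{\varphi^2_{2m}}(e^{it})&=-\tfrac{2m-1}{2}\,e^{i(2m+1)t}+\tfrac{2m+1}{2}\,e^{-i(2m-1)t}.
\end{align*}

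Next I would read off $\xi_{f^j_k}$ directly from \eqref{eq:xif}. When $k$ is odd, every coefficient $a_{2k'}(f^j_k)$, $b_{2k'}(f^j_k)$ vanishes, so $\xi_{f^j_k}\equiv0$ and $\Phi^{\xi_{f^j_k}}\equiv0$; this matches the asserted formulas (since $\cos(k\tfrac\pi2)=0$) and also matches $\Phi_{f^1_k}$ and $\Phi_{f^2_k}+\tfrac2k e^{it}$, which both vanish for odd $k$ by Lemma~\ref{l:Phifk}. When $k=2m$ is even, only the index $k'=m$ survives in \eqref{eq:xif}, giving (as already used in the proof of Lemma~\ref{LLAA8})
\begin{equation*}
\xi_{f^1_{2m}}=\frac{(-1)^{m+1}}{m(1-4m^2)}\,\varphi^1_{2m},\qquad
\xi_{f^2_{2m}}=\frac{(-1)^{m}}{m(1-4m^2)}\,\varphi^2_{2m}.
\end{equation*}
Multiplying the two displayed identities above by these scalars and using $1-4m^2=-(2m-1)(2m+1)$ to cancel the $2m\pm1$ factors, together with $(-1)^m=\cos(k\tfrac\pi2)$ for $k=2m$, yields exactly the claimed expressions for $\Phi^{\xi_{f^1_k}}(e^{it})$ and $\Phi^{\xi_{f^2_k}}(e^{it})$.

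Finally I would compare with Lemma~\ref{l:Phifk}: the formula just obtained for $\Phi^{\xi_{f^1_k}}(e^{it})$ is literally $\Phi_{f^1_k}(e^{it})$, and the one for $\Phi^{\xi_{f^2_k}}(e^{it})$ differs from $\Phi_{f^2_k}(e^{it})$ precisely by the term $\tfrac2k e^{it}$, which is exactly the content of the lemma. I do not anticipate any real obstacle: the whole argument is a deterministic finite computation. The only points requiring care are the bookkeeping of signs when passing from trigonometric monomials to exponentials (in particular $\tfrac1{2i}=-\tfrac i2$), the cancellation of the factors $2m\pm1$ against the denominator $1-4m^2$, and tracking the parity of $k$ through $\cos(k\tfrac\pi2)$.
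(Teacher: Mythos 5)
Your proposal is correct and follows essentially the same route as the paper: read off $\xi_{f^j_k}$ from \eqref{eq:xif} (zero for odd $k$, a single harmonic polynomial $\varphi^j_{2m}$ for $k=2m$), compute $\Phi^{\varphi^j_{2m}}$ on $\partial B_1$ in exponential form, rescale, and compare with Lemma~\ref{l:Phifk}. The only (harmless) difference is that you compute $(iz)\cdot\nabla\varphi$ at $z=e^{it}$ as the tangential derivative $\frac{d}{dt}\varphi(e^{it})$, whereas the paper evaluates $\partial_1\varphi_k=kz^{k-1}$, $\partial_2\varphi_k=ikz^{k-1}$ explicitly and simplifies with trigonometric identities; both give the same intermediate expressions \eqref{eqad14}--\eqref{eqad15}.
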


\begin{proof}[Proof of Lemma~\ref{l:extPhifk}]
	Recall that $\Phi^\xi$ is given by
	\begin{equation}
		\label{eqad11}
		\Phi^\xi(z)=\xi(z)z+((iz)\cdot \nabla\xi(z))iz.
	\end{equation}
	For $f=f^j_k$ we have
	\begin{align}
		\label{eqad12}
		\xi_{f^j_k}&= 0\qquad\text{if }k\text{ is odd},\nn\\
		\xi_{f^1_{2k}}&=\frac{(-1)^{k+1}}{k(1-2k)(1+2k)}\varphi^1_{2k},\nn\\
		\xi_{f^2_{2k}}&=\frac{(-1)^{k}}{k(1-2k)(1+2k)}\varphi^2_{2k},
	\end{align}
	where $\varphi^j_k$ are the harmonic polynomials given in polar coordinates by  $\varphi^1_k=r^k\sin(k\theta)$, $\varphi^2_k=r^k\cos(k\theta)$. Hence
	\begin{align}
		\label{eqad13}
		\Phi^{\xi_{f^j_k}}&\overset{\eqref{eqad11}, \eqref{eqad12}}{=}0\qquad\text{if }k\text{ is odd},\nn\\
		\Phi^{\xi_{f^1_{2k}}}& \overset{\eqref{eqad11}, \eqref{eqad12}}{=}\frac{(-1)^{k+1}}{k(1-2k)(1+2k)}\Phi^{\varphi^1_{2k}},\nn\\
		\Phi^{\xi_{f^2_{2k}}}&\overset{\eqref{eqad11}, \eqref{eqad12}}{=} \frac{(-1)^{k}}{k(1-2k)(1+2k)}\Phi^{\varphi^2_{2k}},
	\end{align}
	and it remains to compute $\Phi^{\varphi^j_{2k}}(e^{it})$.
	
	For $\xi=\varphi^1_k$ we find
	\begin{align}
		\label{eqad14}
		\Phi^{\varphi^1_k}(e^{it})&=\varphi^1_k e^{it} + (-\sin t \,\partial_1\varphi^1_k + \cos t\,\partial_2\varphi^1_k)ie^{it} \nn\\
		&\overset{\eqref{appeqd3}}{=}\sin(kt)e^{it}+(-k \sin t \sin((k-1)t) +k\cos t\cos((k-1)t))ie^{it}\nn\\
		&=\sin(kt)e^{it} + k \cos(kt)ie^{it} \nn\\
		&=\frac i 2 e^{it}\left[
		-e^{ikt}+e^{-ikt} + k e^{ikt}+ke^{-ikt}
		\right] \nn\\
		&=\frac i 2\left[ (k-1)e^{i(k+1)t} + (k+1)e^{-i(k-1)t}\right],
	\end{align}
	and for $\xi=\varphi^2_k$,
	\begin{align}
		\label{eqad15}
		\Phi^{\varphi^2_k}(e^{it})&=\varphi^2_k e^{it} + (-\sin t \,\partial_1\varphi^2_k + \cos t\,\partial_2\varphi^2_k)ie^{it} \nn\\
		&\overset{\eqref{appeqd3}}{=}\cos(kt)e^{it}+(-k \sin t \cos((k-1)t) - k\cos t\sin((k-1)t))ie^{it}\nn\\
		&=\cos(kt)e^{it} -k \sin(kt)ie^{it} \nn\\
		&=\frac 12 e^{it}\left[e^{ikt}+e^{-ikt}-ke^{ikt}+ke^{-ikt} \right]\nn\\
		&=\frac 12 \left[ 
		-(k-1)e^{i(k+1)t} + (k+1)e^{-i(k-1)t}
		\right].
	\end{align}
	Gathering the above we obtain
	\begin{align*}
		\Phi^{\xi_{f^1_{2k}}}&\overset{\eqref{eqad13}, \eqref{eqad14}}{=}\frac{i(-1)^k}{2k}
		\left[
		\frac{e^{i(2k+1)t}}{2k+1} + 
		\frac{e^{-i(2k-1)t}}{2k-1}
		\right],\\
		\Phi^{\xi_{f^2_{2k}}}&\overset{\eqref{eqad13}, \eqref{eqad15}}{=} \frac{(-1)^k}{2k}
		\left[
		\frac{e^{i(2k+1)t}}{2k+1}-\frac{e^{-i(2k-1)t}}{2k-1}
		\right],
	\end{align*}
	which, gathering all the cases ($k$ even or odd),  and recalling the expressions found in Lemma~\ref{l:Phifk} for $\Phi_{f^j_k}$, proves Lemma~\ref{l:extPhifk}.
\end{proof}

\section{Proof of \cite[Lemma 3.4]{GL}}
\label{a:lem3.4}

\begin{lem}[Ghiraldin-Lamy]\label{p:sigmaLp}
	Let $m:\Omega\to\R^2$ satisfy \eqref{ageq3}. Assume $\dv\Phi_f(m)\in \mathcal M(\Omega)$ for all $f\in C^0(\mathbb{R}/ 2\pi\mathbb{Z} )$. Then there exists $\sigma\in \mathcal M(\Omega\times\mathbb{R}/ 2\pi\mathbb{Z})$ such that
	\begin{equation}
		\label{eqgl1.2}
		\langle \dv\Phi_f(m),\zeta\rangle =\langle\sigma(x,s), f(s)\zeta(x)\rangle\qquad\forall f\in C^0(\mathbb{R}/ 2\pi\mathbb{Z} ),\:\zeta\in C^0_c(\Omega),
	\end{equation}
	where $\langle \cdot,\cdot\rangle$ denotes the duality between measures and functions. 
\end{lem}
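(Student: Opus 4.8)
The plan is to mimic the structure of the proof of Lemma \ref{lem:sigmaLp} given above, but with $\mathcal{Z} = C^0_c(\Omega)$ (or rather $C_0(\Omega)$, the completion) in place of $L^{p'}(\Omega)$, so that $\mathcal{Z}^* = \mathcal{M}(\Omega)$. Concretely, for $\zeta \in C^1_c(\Omega)$ define $T_\zeta(f) := \langle \dv\Phi_f(m), \zeta\rangle$; the bound $|T_\zeta(f)| \leq \|\Phi_f\|_{L^\infty}\|\nabla\zeta\|_{L^\infty} \leq C\|\nabla\zeta\|_{L^\infty}\|f\|_{C^0}$ (using the construction \eqref{eqplm1}--\eqref{eqplm1.5} of $\Phi_f$) shows $T_\zeta \in \mathcal{L}(C^0(\R/2\pi\Z);\R)$. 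For each fixed $f$, the hypothesis $\dv\Phi_f(m) \in \mathcal{M}(\Omega)$ gives $\sup_{\zeta \in C^1_c(\Omega),\ \|\zeta\|_{C^0}\leq 1}|\langle \dv\Phi_f(m),\zeta\rangle| = \|\dv\Phi_f(m)\|_{\mathcal{M}(\Omega)} < \infty$, so Banach--Steinhaus applied to $\mathbb{W}_{\mathcal{Z}} = \{T_\zeta : \zeta \in C^1_c(\Omega),\ \|\zeta\|_{C^0}\leq 1\}$ yields $|\langle \dv\Phi_f(m),\zeta\rangle| \leq C\|f\|_{C^0}\|\zeta\|_{C^0}$ for all $f \in C^0(\R/2\pi\Z)$ and $\zeta \in C^1_c(\Omega)$. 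By density this extends to all $\zeta \in C_0(\Omega)$, so that $T\colon C^0(\R/2\pi\Z) \to \mathcal{M}(\Omega) = C_0(\Omega)^*$, $f \mapsto \dv\Phi_f(m)$, is a bounded linear operator.

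Next I would invoke the vector-valued Riesz representation theorem. Here the target space is $\mathcal{M}(\Omega)$, which is \emph{not} reflexive, so the clean form \cite[Theorem VI.2.1]{diest} used in Lemma \ref{lem:sigmaLp} does not directly apply; however $\mathcal{M}(\Omega) = C_0(\Omega)^*$ is a dual space, and a bounded linear operator from $C^0(K)$ into a dual Banach space $Y^*$ (with $K$ compact metric) is always representable by a $Y^*$-valued measure $G$ of bounded semivariation on the Borel $\sigma$-algebra of $K$, in the weak-$*$ sense $\langle Tf, y\rangle = \int_K f\, d\langle G, y\rangle$ for all $y \in Y$; this is the appropriate variant from \cite{diest} (Chapter VI, the representation of operators on $C(K)$ with values in a conjugate space). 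Using this $G$, I would define $\langle\sigma,\psi\rangle$ first on finite sums $\psi(x,s) = \sum_j \one_{E_j}(s)\zeta_j(x)$ with $E_j$ Borel and $\zeta_j \in C_0(\Omega)$ by $\langle\sigma,\psi\rangle := \sum_j \langle G(E_j),\zeta_j\rangle$, exactly as in \eqref{eqokl1}. The estimate $|\langle\sigma,\psi\rangle| \leq \|G\|(\R/2\pi\Z)\max_j\|\zeta_j\|_{C^0} = \|T\|\,\|\psi\|_{L^\infty(\R/2\pi\Z;C_0(\Omega))}$ lets me extend $\sigma$ continuously to the closure of such simple functions, in particular to $C^0(\R/2\pi\Z; C_0(\Omega)) \supset C^0(\R/2\pi\Z \times \Omega)$ (tensor products are dense). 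For $f = \one_E$ the definition gives $\langle\sigma(x,s), f(s)\zeta(x)\rangle = \langle G(E),\zeta\rangle = \langle \int f\, dG, \zeta\rangle$, and by continuity in $f$ this persists for all $f \in C^0(\R/2\pi\Z)$, whence $\langle\sigma(x,s), f(s)\zeta(x)\rangle = \langle Tf,\zeta\rangle = \langle \dv\Phi_f(m),\zeta\rangle$, which is \eqref{eqgl1.2}. Finally, $\sigma$ is a continuous linear functional on $C^0(\Omega \times \R/2\pi\Z)$, hence an element of $\mathcal{M}(\Omega\times\R/2\pi\Z)$ by the scalar Riesz representation theorem, completing the proof.

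The main obstacle I anticipate is the non-reflexivity of $\mathcal{M}(\Omega)$: one must be careful that the representing measure $G$ produced by the operator-on-$C(K)$ representation theorem is $\mathcal{M}(\Omega)$-valued (i.e. countably additive in the appropriate weak-$*$ topology) and has finite semivariation, so that the resulting $\sigma$ is genuinely a \emph{finite} Radon measure on the product rather than merely a finitely additive set function or an element of a larger bidual. This is precisely the subtlety that (as the excerpt notes) was glossed over in the original \cite{GL} proof of Lemma 3.4 and that Lamy resolved via the generalized representation results in \cite{diest}; the remaining steps — Banach--Steinhaus, the extension by density, and the identification of $\sigma$ with a scalar measure on the product — are routine functional analysis. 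A minor secondary point is the density of $C^0(\R/2\pi\Z; C_0(\Omega))$-closures of step functions, which follows from approximating continuous functions on the compact factor $\R/2\pi\Z$ by simple functions uniformly in the $C_0(\Omega)$-valued sup norm.
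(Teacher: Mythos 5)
Your proposal is correct and follows essentially the same route as the paper's proof: Banach--Steinhaus to bound $T\colon f\mapsto \dv\Phi_f(m)$ as an operator into $\mathcal M(\Omega)=C^0_c(\Omega)^*$, a Diestel--Uhl representation of $T$ by a vector measure $G$, definition of $\sigma$ on simple tensors $\sum_j\one_{E_j}(s)\zeta_j(x)$ with the semivariation bound, and extension by density to a continuous functional on $C^0(\Omega\times\R/2\pi\Z)$ identified with a finite Radon measure. The only (immaterial) difference is in handling the non-reflexivity of the target: the paper applies \cite[Theorem VI.2.1]{diest} verbatim to obtain an $\mathcal M(\Omega)^{**}$-valued measure which it then pairs only with the evaluation functionals $\zeta^*:\mu\mapsto\langle\mu,\zeta\rangle$, whereas you invoke the dual-space (weak-$*$) variant to get an $\mathcal M(\Omega)$-valued measure directly; the resulting values $\langle G(E),\zeta\rangle$ coincide.
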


\begin{proof}
	 Since the proof follows almost exactly the same lines as that of Lemma \ref{lem:sigmaLp}, we only sketch it briefly focusing on the differences. Letting $\mathcal{Z}=C^0_c(\Omega)$ and thus $\mathcal{Z}^*=\mathcal{M}(\Omega)$, exactly as in the proof of Lemma \ref{lem:sigmaLp}, the Banach-Steinhaus' uniform boundedness principle implies the map
	\begin{align}
	\label{fineqa2}
		T\colon C^0(\mathbb{R}/ 2\pi\mathbb{Z} )\to \mathcal{Z}^*,\quad f\mapsto \dv\Phi_f(m)
	\end{align}
	is a bounded linear operator. 
	
	Again by \cite[Theorem VI.2.1]{diest}, there exists an $\mathcal M(\Omega)^{**}$-valued Borel measure $G$ on $\mathbb{R}/ 2\pi\mathbb{Z} $ such that
	\begin{equation}
		\label{fineqa3}
		\langle \varphi^*,Tf\rangle =\int_{\mathbb{R}/ 2\pi\mathbb{Z} } f\, d \langle G,\varphi^*\rangle\qquad\forall \varphi^*\in\mathcal{Z}^{**}.
	\end{equation}
	As in the proof of Lemma \ref{lem:sigmaLp}, for $\psi$ a finite linear combination of the form
	\begin{align*}
		\psi(x,s)=\sum_j \one_{E_j}(s)\zeta_j(x),\qquad E_j\subset \mathbb{R}/ 2\pi\mathbb{Z} \text{ Borelian, }\zeta_j\in \mathcal{Z},
	\end{align*}
	we define
	\begin{align}
		\label{eqokl1.2}
		\langle\sigma,\psi\rangle :=\sum_j \langle G(E_j),\zeta_j^*\rangle,\qquad\zeta_j^*:=( \mu\mapsto \langle \mu,\zeta_j\rangle)\in\mathcal{Z}^{**}.
	\end{align}
	Thus, assuming the $E_j$'s are disjoint and non-negligible, as in \eqref{eqp2}, we have
	\begin{align*}
		\langle\sigma,\psi\rangle \leq \norm{T}_{\mathcal L(C^0(\mathbb{R}/ 2\pi\mathbb{Z} );\mathcal{Z}^*)}\norm{\psi}_{L^\infty(\Omega\times\mathbb{R}/ 2\pi\mathbb{Z})}.
	\end{align*}
	After extension, $\sigma$ can therefore be considered as a continuous linear form on $C^0_c(\Omega\times\mathbb{R}/ 2\pi\mathbb{Z})$. Further, for $f=\one_{E}$ and $\zeta\in \mathcal{Z}$, we deduce from (\ref{eqokl1.2}) that
	\begin{align*}
		\langle \sigma(x,s),f(s)\zeta(x)\rangle = \int f \, d\langle G ,\zeta^*\rangle\overset{(\ref{fineqa2}),(\ref{fineqa3})}{=}\la \dv \Phi_f(m),\zeta\ra
	\end{align*}
	and this formula must also be valid for $f\in C^0(\mathbb{R}/ 2\pi\mathbb{Z} )$. This establishes \eqref{eqgl1.2}.
\end{proof}

\section{Proof of Lemma \ref{slice}}
\label{a:sigmaLp}

\begin{proof}[Proof of Lemma \ref{slice}]
	Given any open set $U\subset\subset\Omega$, recall from \eqref{eqgl1.2} we have
	\begin{equation}
		\label{eq30}
		\la \dv \Phi_f(m), \zeta \ra=\iint_{U\times \R/ 2\pi \mathbb{Z}} f(s)\zeta(x)\, d \sigma(x,s)\qquad\forall f\in C^0\lt(\mathbb{R}/ 2\pi \mathbb{Z}\rt)  ,\:\zeta\in C^{0}_c(U). 
	\end{equation}
	Let $\{f_j\}_j\subset  C^0\lt(\R/ 2\pi \mathbb{Z}\rt)$ be a countable dense subset of $C^0\lt(\R/ 2\pi \mathbb{Z}\rt)$, and $\Lambda$ be the intersection of the Lebesgue points of $\dv\Phi_{f_j}(m)$ and $\mu_m$. Note that $|\Omega\setminus\Lambda|=0$. For any   $x_0\in\Lambda\cap U$, define
	\begin{equation}
		\label{eq:L_x}
		L_{x_0}(f_j):=\lim_{r\rightarrow 0} \Xint-_{B_r(x_0)}\dv \Phi_{f_j}(m)\,dx=\dv \Phi_{f_j}(m)(x_0).
	\end{equation}
	By construction \eqref{eqplm1}--\eqref{eqplm1.5}, it is clear that $\Phi_f$ depends linearly on $f$ and $\|\Phi_f\|_{C^2}\leq C\|f\|_{C^0}$ for some constant $C>0$ independent of $f$. As $x_0$ is a Lebesgue point of $\dv\Phi_{f_j}(m)$ and $\mu_m$, it follows that
	\begin{align}
		\label{control:dvPhi_f}
		\lt|\dv \Phi_{f_j}(m)(x_0)\rt|&\leq\limsup_{r\rightarrow 0} \Xint-_{B_r(x_0)}\lt|\dv \Phi_{f_j}(m)(x)\rt|\,dx\nn\\
		&\leq C\lVert f_j\rVert_{C^0\lt(\mathbb{R}/ 2\pi\mathbb{Z}\rt)}\lim_{r\rightarrow 0} \Xint-_{B_r(x_0)}\mu_m(x)\,dx = C\lVert f_j\rVert_{C^0\lt(\mathbb{R}/ 2\pi\mathbb{Z}\rt)}\mu_m(x_0).
	\end{align}
	Thus we deduce from \eqref{eq:L_x} and \eqref{control:dvPhi_f} that $\lt|L_{x_0}(f_j)\rt|\leq  C\lVert f_j\rVert_{C^0\lt(\mathbb{R}/ 2\pi\mathbb{Z}\rt)}\mu_m(x_0)$ for all $j$. As $\{f_j\}$ is dense in $C^0\lt(\R/ 2\pi \mathbb{Z}\rt)$, $L_{x_0}$ can be uniquely extended to a bounded linear operator on $C^0\lt(\R/ 2\pi \mathbb{Z}\rt)$ satisfying
	\begin{equation}
		\label{control:L_x}
		\lt|L_{x_0}(f)\rt|\leq  C\lVert f\rVert_{C^0\lt(\mathbb{R}/ 2\pi\mathbb{Z}\rt)}\mu_m(x_0)\qd\text{for all }f\in C^0\lt(\R/ 2\pi \mathbb{Z}\rt).
	\end{equation}
	By the Riesz Representation Theorem, $L_{x_0}$ can be represented as a measure, denoted by $\sigma_{x_0}$ for all $x_0\in\Lambda\cap U$, i.e. for a.e. $x_0\in U$. Further, as a consequence of \eqref{control:L_x}, we have $\lVert \sigma_x\rVert_{\mathcal{M}(\R/ 2\pi \mathbb{Z})}\leq C\mu_m(x)$ for a.e. $x\in U$. This establishes \eqref{eq:sigmaLp} as $\mu_m\in L^p(U)$ and $U\subset\subset\Omega$ is arbitrary. 
	
	Finally, for any $f\in C^0\lt(\R/ 2\pi \mathbb{Z}\rt)$, by density of $\{f_j\}$ in $C^0\lt(\R/ 2\pi \mathbb{Z}\rt)$, there exists $f_{j_k}\to f$ in $C^0\lt(\R/ 2\pi \mathbb{Z}\rt)$. It follows from \eqref{eq:L_x} and \eqref{eq30} that
	\begin{align}
		\label{eq:slicing}
		\int_{U} \lt(\int_{ \R/ 2\pi \mathbb{Z}} f_{j_k}(s) d\sigma_x(s)\rt) \zeta(x) dx &\overset{(\ref{eq:L_x})}{=} \int_{U} \dv\Phi_{f_{j_k}}(m)(x) \zeta(x) dx\nn\\
		&\overset{(\ref{eq30})}{=}\iint_{U\times \R/ 2\pi \mathbb{Z}} f_{j_k}(s)\zeta(x) d \sigma(x,s)\qd\forall \zeta\in C^0_c(U)
	\end{align}
	for all $k$.  We can pass to the limit as $k\to\infty$ in the above right-hand side. For the left-hand side, note that $\int_{ \R/ 2\pi \mathbb{Z}} f_{j_k}(s) d\sigma_x(s)\to\int_{ \R/ 2\pi \mathbb{Z}} f(s) d\sigma_x(s)$ for a.e. $x\in U$ and $\lt|\int_{ \R/ 2\pi \mathbb{Z}} f_{j_k}(s) d\sigma_x(s)\rt|\overset{\eqref{control:L_x}}{\leq} C\|f\|_{C^0(\R/ 2\pi \mathbb{Z})} \mu_m(x)\in L^p(U)$. So by the Dominated Convergence Theorem, we can also pass to the limit in the left-hand side of \eqref{eq:slicing} to deduce \eqref{eq30.2}.
\end{proof}

\bibliographystyle{alpha}
\bibliography{aviles_giga}

\end{document}